\newcommand\reallywidehat[1]{%
\savestack{\tmpbox}{\stretchto{%
  \scaleto{%
    \scalerel*[\widthof{\ensuremath{#1}}]{\kern-.6pt\bigwedge\kern-.6pt}%
    {\rule[-\textheight/2]{1ex}{\textheight}}
  }{\textheight}%
}{0.5ex}}%
\stackon[1pt]{#1}{\tmpbox}%
}
\DeclareMathOperator*{\E}{\mathbb{E}}
\theoremstyle{plain}
\newtheorem{theorem}{Theorem}
\newtheorem{lemma}[theorem]{Lemma}
\newtheorem{proposition}[theorem]{Proposition}
\newtheorem{corollary}[theorem]{Corollary}
\theoremstyle{definition}
\newtheorem{definition}[theorem]{Definition}
\newtheorem{remark}[theorem]{Remark}
\numberwithin{theorem}{section}
\newcommand{\cL}{\mathcal{L}}
\title{The Erdős--Moser sum-free set problem via improved bounds for $k$-configurations}
\author{Adrian Beker\footnote{University of Zagreb, Faculty of Science, Department of Mathematics, Zagreb,
Croatia. Email: \nolinkurl{adrian.beker@math.hr}}}
\date{\today}
\begin{document}

\maketitle

\begin{abstract}
A \emph{$k$-configuration} is a collection of $k$ distinct integers $x_1,\ldots,x_k$ together with their pairwise arithmetic means $\frac{x_i+x_j}{2}$ for $1 \leq i < j \leq k$. Building on recent work of Filmus, Hatami, Hosseini and Kelman on binary systems of linear forms and of Kelley and Meka on Roth's theorem on arithmetic progressions, we show that, for $N \geq \exp((k\log(2/\alpha))^{O(1)})$, any subset $A \subseteq [N]$ of density at least $\alpha$ contains a $k$-configuration. This improves on the previously best known bound $N \geq \exp((2/\alpha)^{O(k^2)})$, due to Shao. As a consequence, it follows that any finite non-empty set $A \subseteq \mathbb{Z}$ contains a subset $B \subseteq A$ of size at least $(\log|A|)^{1+\Omega(1)}$ such that $b_1+b_2 \not\in A$ for any distinct $b_1,b_2 \in B$. This provides a new proof of a lower bound for the Erdős--Moser sum-free set problem of the same shape as the best known bound, established by Sanders.
\end{abstract}

\section{Introduction}\label{sec:intro}

The motivation for this paper comes from the following old question of Erdős and Moser \cite[p.\ 187]{erdos}, which was recently reiterated by Green in his list of open problems \cite[Problem 2]{green-open-problems} (see also \cite[Problem \#787]{bloom-erdosproblems}). Given a finite non-empty set of integers $A$, what is the largest size of a subset $B \subseteq A$ which is sum-free with respect to $A$? Here, we say that $B$ is \emph{sum-free with respect to $A$} to mean that $b_1+b_2 \not\in A$ for any distinct $b_1,b_2 \in A$. Letting $M(A)$ denote the answer to this question, we are interested in the behaviour of $\phi(n)$, the minimum of $M(A)$ as $A$ ranges over all sets of integers of size $n$. Note that the above requirement that $b_1,b_2$ be distinct is necessary in order for the problem to be non-trivial. Indeed, the example $A = \{2^j \mid j \in [n]\}$ shows that dropping the distinctness assumption would result in $\phi$ being constantly equal to $1$.

We briefly review the history of progress on this problem. A simple example showing that $\phi(n) \leq \frac{1}{3}n+O(1)$ was given by Erdős \cite{erdos}. This was subsequently improved by Selfridge \cite[p.\ 187]{erdos} to $\phi(n) \leq (\frac{1}{4}+o(1))n$. A sublinear upper bound on $\phi(n)$ was first established by Choi \cite[p.\ 190]{erdos}, who later improved this \cite{choi} to a power-saving bound\footnote{In fact, Choi also observed that the present formulation of the problem is equivalent to the one initially proposed by Erdős and Moser, in which the reals were taken as the ambient group.} of the form $\phi(n) \leq n^{2/5+o(1)}$. The error term in the exponent was refined by Baltz, Schoen and Srivastav \cite{baltz-schoen-srivastav}, who proved that $\phi(n) = O((n\log n)^{2/5})$. Ruzsa \cite{ruzsa-sum-free} was the first to prove that $\phi(n)$ grows subpolynomially in $n$, namely $\phi(n)\leq \exp(O(\sqrt{\log n}))$. This is the best known upper bound to date.

In the other direction, progress has been somewhat slower. Erdős and Moser showed that $\phi(n) \to \infty$ as $n \to \infty$ and Klarner obtained the quantification $\phi(n) = \Omega(\log n)$ (see \cite{erdos}). However, the first published proof of a non-trivial lower bound seems to be that of Choi \cite{choi}, who showed using a greedy argument that $\phi(n) \geq \log_2n$ (see also \cite[Theorem 6.2]{tao-vu} for a proof of the slightly weaker bound $\phi(n) \geq \log n+O(1)$). Using a more nuanced greedy strategy, Ruzsa \cite{ruzsa-sum-free} was able to obtain the constant factor improvement $\phi(n) > 2\log_3n-1$. Soon afterwards, a significant advance was made by Sudakov, Szemerédi and Vu \cite{sudakov-szemeredi-vu}, who proved the first superlogarithmic lower bound, namely $\phi(n) \geq g(n)\log n$ with $g(n) = (\log^{(5)}n)^{1-o(1)}$ (here, $\log^{(k)}$ denotes the $k$-fold logarithm). This was later improved by Dousse \cite{dousse} to $g(n) = (\log^{(3)}n)^{\Omega(1)}$ and Shao \cite{shao} to $g(n) \geq (\log^{(2)}n)^{1/2-o(1)}$. Finally, in a recent tour de force of additive combinatorics, Sanders \cite{sanders} showed that $g(n) = (\log n)^{\Omega(1)}$ is permissible.

We take a moment to further discuss the series of works starting from the Sudakov--Szemerédi--Vu breakthrough since these are particularly relevant to us. By making extensive use of the Balog--Szemerédi--Gowers--Freiman machinery, Sudakov, Szemerédi and Vu ultimately reduce the problem to that of finding $k$-configurations in dense sets of integers. Herein, if $G$ is an abelian group such that the map $G \to G$, $x \mapsto 2x$ is an automorphism\footnote{In our case, either the additive group of the rationals or a finite abelian group of odd order.} and $k \geq 2$ is an integer, a \emph{$k$-configuration} is defined to be a collection $\mathcal{C}$ of $k$ elements $x_1, \ldots, x_k \in G$ together with their pairwise arithmetic means $\frac{x_i+x_j}{2}$ for $1 \leq i < j \leq k$. We say that $\mathcal{C}$ is \emph{generated by} $x_1,\ldots,x_k$; $\mathcal{C}$ is said to be \emph{non-degenerate} if $x_1, \ldots, x_k$ are pairwise distinct.

Given $\alpha \in (0,\frac{1}{2}]$ and $k \geq 2$, let $F(\alpha,k)$ denote the smallest positive integer $N$ such that any subset of $[N]$ of density at least $\alpha$ contains a non-degenerate $k$-configuration. Sudakov, Szemerédi and Vu observed that an arithmetic progression of length $2k-1$ contains the $k$-configuration generated by the elements at odd indices, so appeal to Gowers' bound \cite{gowers-szemeredi} for Szemerédi's theorem yields
\[F(\alpha,k) \leq \exp(\exp(\alpha^{-\exp(\exp(O(k)))})).\]
However, as was first observed by Dousse \cite{dousse}, this is overkill in that one doesn't need the full strength Szemerédi's theorem in order to obtain bounds for $F$. Indeed, the system of linear forms defining a $k$-configuration has complexity $1$ in the sense of Green and Tao \cite[Definition 1.5]{green-tao}. Thus, to get a handle on $k$-configurations, one can use the $U^2$-norm (i.e.\ Fourier analysis) rather than the higher-order $U^{k-1}$-norm. Implementing this idea analogously to Roth's proof of his theorem on three-term progressions \cite{roth}, Dousse obtains the bound
\[F(\alpha,k) \leq \exp(\exp(\alpha^{-O(k^2)})).\]
Shao's improvement arises, roughly speaking, from the use of Bohr sets in analogy to Bourgain's proof of Roth's theorem \cite{bourgain}. He proves in \cite{shao} that
\[F(\alpha, k) \leq \exp(\alpha^{-O(k^2)}).\]
In contrast to previous works, Sanders' bounds on $\phi$ aren't obtained by improving the bounds on $F$. Instead, Sanders bypasses the study of $k$-configurations by exploiting certain weaknesses in the initial reduction to this problem. Hence, even though we now have a better understanding of the Erdős--Moser sum-free set problem, there remains the problem of improving bounds for sets lacking $k$-configurations.

In the meanwhile, there have been major advances in our understanding of sets lacking three-term progressions. In a remarkable breakthrough, Kelley and Meka \cite{kelley-meka} established that any subset of $[N]$ with this property must have density at most\footnote{This has subsequently been improved to $\exp(-\Omega((\log N)^{1/9}))$ by Bloom and Sisask \cite{bloom-sisask-improvement}.} $\exp(-\Omega((\log N)^{1/12}))$. This matches the shape of Behrend's lower bound \cite{behrend} and improves on a long line of incremental progress culminating in the work of Bloom and Sisask \cite{bloom-sisask-barrier}. Moreover, the techniques introduced by Kelley and Meka are fundamentally different to the ones previously used to
study three-term progressions and have already seen success in applications to other problems in additive combinatorics. One such work is that of Filmus, Hatami, Hosseini and Kelman \cite{filmus-hatami-hosseini-kelman}, which establishes bounds for sets lacking so-called binary systems of linear forms. By combining the ideas of Filmus et al.\ with those of Kelley and Meka, we are able to improve the bounds for sets lacking $k$-configurations. Our main result is the following.

\begin{theorem}
\label{thm:main_result}
There is an absolute constant $C > 0$ such that the following holds. Let $\alpha \in (0,1]$ and $k \geq 2$. If $N \geq \exp(Ck^{68}\log(2/\alpha)^{16})$, then any subset $A \subseteq [N]$ of density at least $\alpha$ contains a non-degenerate $k$-configuration.
\end{theorem}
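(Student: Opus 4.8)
The plan is to work in a cyclic group, reduce the problem to lower-bounding a weighted count of $k$-configurations, and then run a Kelley--Meka-style density-increment argument in which the pseudorandomness hypothesis needed for counting is precisely the one that can be repaired efficiently. Concretely, I would first pass from $[N]$ to $G=\mathbb{Z}/p\mathbb{Z}$ for a prime $p\in[2N,4N]$: since every form defining a $k$-configuration is an average of two of the generators, no wraparound occurs, so any $k$-configuration in $G$ with all elements in the image of $A$ is automatically a genuine integer configuration, while the density drops by at most a constant factor. Writing $m=\binom{k+1}{2}$ for the number of forms --- the $k$ ``vertex'' forms $x_i$ and the $\binom{k}{2}$ ``edge'' forms $\tfrac{x_i+x_j}{2}$ --- and $\alpha$ for the density of $A$ in $G$, I would study
\[
\Lambda(A)=\mathbb{E}_{x_1,\dots,x_k\in G}\Big[\prod_{i=1}^{k}1_A(x_i)\prod_{1\le i<j\le k}1_A\big(\tfrac{x_i+x_j}{2}\big)\Big].
\]
The degenerate tuples (with some $x_i=x_j$) contribute at most $\binom{k}{2}\alpha^{k-1}/|G|$ to $\Lambda(A)$, since collapsing an equal pair reduces the defining system to that of a $(k-1)$-configuration; as $N\ge\exp(Ck^{68}\log(2/\alpha)^{16})$ comfortably exceeds the $\exp(O(k^{2}\log(1/\alpha)))$ threshold past which $\tfrac12\alpha^{m}$ dominates this quantity, it suffices to prove $\Lambda(A)\ge\tfrac12\alpha^{m}$.

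\textbf{Counting lemma.} The defining system is binary --- each form involves at most two of the variables --- and hence of complexity $1$, so $\Lambda(A)$ ought to be close to the ``random'' value $\alpha^{m}$ whenever $A$ is suitably pseudorandom. I would prove a graph-theoretic version of this, exchanging the forms for constants one at a time, to obtain
\[
\big|\Lambda(A)-\alpha^{m}\big|\le m\cdot\mathcal{D}(A),
\]
where the loss is \emph{linear} in $m$ (hence polynomial in $k$), as opposed to the exponential-in-$m$ loss of iterated Cauchy--Schwarz. The point of the work of Filmus et al.\ is that $\mathcal{D}(A)$ may here be taken to be the \emph{weak} pseudorandomness functional appearing in Kelley--Meka --- a suitable moment of $1_A\ast 1_A-\alpha^{2}$ rather than $\|\widehat{1_A-\alpha}\|_{\infty}$ --- so that a failure of pseudorandomness is repairable with only a polylogarithmic loss in $1/\alpha$.

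\textbf{Increment and iteration.} If $\Lambda(A)<\tfrac12\alpha^{m}$ then $\mathcal{D}(A)>\alpha^{m}/(2m)$, i.e.\ $A$ violates the Kelley--Meka condition by at least an $\Omega(1/k^{2})$ fraction of the main term. I would feed this into the Kelley--Meka machine --- unbalancing to promote the $L^{2}$-type deficiency to an $L^{1+\delta}$ one, dependent random choice (``sifting'') to locate a large spread subset, and Croot--Sisask almost periodicity --- to produce a regular Bohr set $B$ of codimension $\mathrm{poly}(k)\cdot\mathrm{polylog}(1/\alpha)$ and width $\exp(-\mathrm{poly}(k)\cdot\mathrm{polylog}(1/\alpha))$, together with a translate $t$ on which $A$ has relative density at least $\alpha(1+\Omega(1/\mathrm{poly}(k)))$. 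Since $k$-configurations are translation-invariant, both the counting lemma and this dichotomy relativize to Bohr sets in the usual Bourgain--Shao manner, so the argument iterates: the relative density can increase at most $\mathrm{poly}(k)\log(1/\alpha)$ times before exceeding $1$, so the total codimension accrued is $\mathrm{poly}(k)\cdot\mathrm{polylog}(1/\alpha)$ and the final Bohr set retains size at least $|G|\exp(-\mathrm{poly}(k)\cdot\mathrm{polylog}(1/\alpha))$; arranging the implicit exponents so that this loss is at most $k^{68}\log(2/\alpha)^{16}\le\log N$ keeps the Bohr set nonempty, forcing the iteration to terminate at a stage where the counting lemma produces the configuration.

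\textbf{Main obstacle.} The crux is keeping \emph{every} estimate polynomial in $k$. This requires (i) that the counting lemma lose only a factor $m=O(k^{2})$, which rules out Gowers-type iterated Cauchy--Schwarz and forces the edge-by-edge exchange exploiting the binary structure; (ii) a careful matching of the weak discrepancy $\mathcal{D}$ controlling $\Lambda(A)$ with exactly the quantity that the Kelley--Meka unbalancing and sifting steps can convert into a density increment; and (iii) carrying all of this out uniformly over Bohr sets while tracking the degradation of codimension and width, so that the losses over the $\mathrm{poly}(k)\log(1/\alpha)$ iterations stay polynomial in $k$ rather than compounding. The Bohr-set bookkeeping in (iii) is the most laborious part, but the conceptual heart is (i)--(ii): importing the Filmus et al.\ counting lemma and the Kelley--Meka increment simultaneously, with compatible notions of pseudorandomness.
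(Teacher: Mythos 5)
Your high-level architecture matches the paper's: embed $[N]$ into a cyclic group of odd order, use a complexity-one counting lemma in the spirit of Filmus--Hatami--Hosseini--Kelman with loss polynomial in $k$, and iterate a Kelley--Meka density increment over Bohr sets until the counting lemma forces a configuration. However, the central counting-lemma step, as you have written it, is not correct, and glossing over it hides precisely the difficulty the paper is forced to resolve.

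The claimed inequality $|\Lambda(A)-\alpha^m|\le m\cdot\mathcal{D}(A)$ compares $\Lambda$ to the wrong reference value. When you pass to the Filmus et al.\ bipartite-graph setup, the edge constraint $\frac{x_i+x_j}{2}\in A$ for $x_i\in A_i$, $x_j\in A_j$ encodes a bipartite graph $A_{i,j}\subseteq A_i\times A_j$ whose density $\alpha_{i,j}$ is a \emph{normalised three-term count} $\langle\mu_{A_i}\ast\mu_{A_j},1_{2\cdot A}\rangle$ --- a priori this can be anything between $0$ and $1$ and need not be close to $\alpha$. The genuine counting lemma (the paper's Theorem~\ref{thm:graph_counting}, after \cite[Theorem~2.1]{filmus-hatami-hosseini-kelman}) therefore compares $\Lambda$ to $\prod_{(i,j)}\alpha_{i,j}$, not $\alpha^m$, and its conclusion is a multiplicative $(1\pm\delta)$ deviation of a \emph{specific} bipartite graph on a localised grid with $\delta\asymp\varepsilon^2 m^{-2}$, not a bound on a universal additive functional. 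If some $\alpha_{i,j}$ deviates from $\alpha$ by a constant factor, your inequality would only force $\mathcal{D}(A)\gtrsim\alpha^m/m$, which is exponentially small in $k^2$ and cannot seed a density increment of size $\Omega(k^{-O(1)})$; but the correct way to read this scenario is that $A$ itself violates Kelley--Meka pseudorandomness for three-term progressions with $\Omega(1)$ discrepancy, and one should feed it directly into the Roth-type increment. The paper therefore performs a preliminary dichotomy: either all $\alpha_{i,j}$ lie in $[(1-\delta/4)\alpha,(1+\delta/4)\alpha]$ (and then applies Theorem~\ref{thm:graph_counting} with $\varepsilon=\tfrac12$), or some $\alpha_{i,j}$ deviates, which is already the hypothesis of a local Kelley--Meka argument for Roth (cf.\ display \eqref{eq:starting_assumption}). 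The paper's introduction explicitly warns against thinking that the presence of the one-variable forms $x_i$ alongside the two-variable forms $\frac{x_i+x_j}{2}$ is ``merely a formal change'' requiring only self-loops in the graphs of \cite{filmus-hatami-hosseini-kelman}; the actual fix requires folding in additional Kelley--Meka/Bloom--Sisask arguments at this point.

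Two further points you leave unaddressed. First, your target ``$\Lambda(A)\ge\tfrac12\alpha^m$'' for the original $A$ cannot be achieved by a Bohr-set density increment: the count that comes out at the end lives inside the final Bohr set and, once re-expressed over $G^k$, picks up a factor $\prod_j\mu(B^{(j)})$ that is only $\exp(-\mathrm{poly}(k)\,\mathrm{polylog}(1/\alpha))$; the content of Theorem~\ref{thm:general_groups} is the weaker but still sufficient lower bound of that shape. Second, in the iteration the frequency set acquires the dilated copy $\{\gamma\psi_2^{\pm1}\mid\gamma\in\Gamma\}$ at every stage (from working with $2\cdot A$ alongside $A$), which a priori doubles the rank each step and would ruin the polynomial bound; the paper controls this using the observation that the new characters all have the structured form $\gamma\psi_2^a$, so the rank actually grows only quadratically in the iteration length (Pilatte's trick, cf.\ the proof of Theorem~\ref{thm:general_groups} from Proposition~\ref{prop:density_increment}). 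This book-keeping is not optional if you want the exponent $68$.
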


Theorem \ref{thm:main_result} can be viewed as an extension of the main result of Kelley and Meka \cite[Theorem 1.1]{kelley-meka}, which corresponds to the case $k = 2$ of our result. At the same time, it offers an improvement over \cite[Theorem 1.3]{shao}, the best bound previously available for general $k$. We remark that, if we consider separately the dependence of $F$ on $k$ and $\alpha$, then the bound in Theorem \ref{thm:main_result} is optimal up to respective exponents of $k$ and $\log(2/\alpha)$. Indeed, the classical construction of Behrend \cite{behrend} implies $F(\alpha,2) \geq \exp(\Omega(\log(2/\alpha)^2))$ and Green's bound on the clique number of random Cayley graphs \cite{green-cayley} gives $F(\alpha_0,k) \geq \exp(\Omega(k))$ for some absolute constant $\alpha_0 \in (0,1)$ (see \cite[p.\ 5]{sanders} for a more detailed discussion).

Going back to the Erdős--Moser sum-free set problem, we establish the following as an application of Theorem \ref{thm:main_result}.

\begin{theorem}
\label{thm:erdos_moser}
Let $c \in (0,\frac{1}{68})$ be arbitrary. Then for any sufficiently large finite set $A \subseteq \mathbb{Z}$, there exists a subset $B \subseteq A$ of size at least $(\log |A|)^{1+c}$ such that $b_1+b_2 \not\in A$ for any distinct $b_1,b_2 \in B$.
\end{theorem}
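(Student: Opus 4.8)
The plan is to deduce Theorem~\ref{thm:erdos_moser} from Theorem~\ref{thm:main_result} by following the reduction of Sudakov, Szemer\'edi and Vu \cite{sudakov-szemeredi-vu} of the Erd\H{o}s--Moser problem to the problem of $k$-configurations, in the form subsequently streamlined by Dousse \cite{dousse} and Shao \cite{shao}; the one new ingredient is that Theorem~\ref{thm:main_result} may be substituted for the weaker configuration bounds used before. Fix $c \in (0,\tfrac{1}{68})$. It suffices to show that every sufficiently large $A \subseteq \mathbb{Z}$ with $|A| = n$ contains a subset $B$ with $|B| \geq (\log n)^{1+c}$ such that $b_1 + b_2 \notin A$ for all distinct $b_1, b_2 \in B$.

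The first step is to pass to a bounded, suitably dense model. After translating so that $0 \in A$ and replacing $A$ by a Freiman $2$-isomorphic image that fixes $0$ --- an operation under which the ternary relations $b_1 + b_2 = a$ governing sum-freeness are preserved --- one may assume $A \subseteq [N_0]$ with $N_0 = n^{O(1)}$. A scale-selection argument then produces a nested sequence of subintervals $I_1 \supseteq I_2 \supseteq \cdots \supseteq I_{m'}$, with $m' = \Omega(\log n)$, with $|I_t|$ geometrically decreasing and every $|I_t|$ still of size $n^{\Omega(1)}$, inside each of which $A$ has density at least $\beta := 1/\mathrm{polylog}(n)$; the complementary case, where $A$ is too spread out at every scale for such intervals to exist, is handled directly. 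Put $k := \lfloor c_1 (\log n)^{1/68} \rfloor$ with $c_1 > 0$ a small constant chosen so that the threshold $\exp(Ck^{68}\log(2/\beta)^{16})$ of Theorem~\ref{thm:main_result} is at most $\min_t |I_t|$.

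The core of the argument is a greedy construction in the spirit of Choi's proof that $\phi(n) \geq \log_2 n$ \cite{choi}, in which each round adds an entire configuration rather than a single element. One maintains a partial sum-free set $\{b_1, \ldots, b_\ell\} \subseteq A$, and at round $t$ a candidate set $A_t$ contained in $I_t$ consisting of the elements $a$ that are \emph{safe}, i.e.\ $a + b_i \notin A$ for every $b_i$ chosen so far, and of bounded additive multiplicity, in the sense that $\#\{(a_1,a_2)\in A^2 : a_1 - a_2 = a\}$ is small. Crucially, one applies Theorem~\ref{thm:main_result} not to $A_t$ but to $S_t := \{a \in A_t : 2a \notin A\}$, which one checks is still of density $\asymp\beta$ in $I_t$: a non-degenerate $k$-configuration $x_1, \ldots, x_k$ in $S_t$ has every mean $\tfrac{x_i+x_j}{2}$ in $S_t$, hence every pairwise sum $x_i + x_j = 2 \cdot \tfrac{x_i+x_j}{2}$ outside $A$, so $\{x_1, \ldots, x_k\}$ is internally sum-free with respect to $A$; and since the $x_i$ lie in $A_t$ they are safe with respect to $b_1, \ldots, b_\ell$. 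Appending these $k$ elements and moving to round $t+1$, after $m' = \Omega(\log n)$ rounds one has accumulated a sum-free subset of $A$ of size $\Omega(k m') = \Omega((\log n)^{1+1/68})$, which exceeds $(\log n)^{1+c}$ once $n$ is large, since $c < \tfrac{1}{68}$.

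The step I expect to be the main obstacle is the bookkeeping that keeps the greedy process alive for all $\Omega(\log n)$ rounds. Each time $k$ new elements $x_i$ are appended, one must delete from the candidate set every $a$ with $a + x_i \in A$ for some $i$; controlling this deletion forces one to draw the configuration only from elements of additive multiplicity at most $|A_t|/\mathrm{polylog}(n)$, and one has to verify both that these still form a density-$\asymp\beta$ subset of $S_t$ and that passing to $I_{t+1}$ does not erode the density, all the while holding the per-round configuration size at $\asymp (\log n)^{1/68}$. It is precisely here that the polynomial --- rather than iterated-exponential --- shape of the threshold in Theorem~\ref{thm:main_result} is used: with Shao's bound the per-round size could be taken only $\asymp (\log^{(2)} n)^{1/2}$, and with Gowers' bound only $\asymp \log^{(5)} n$. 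These estimates can be carried out exactly as in \cite{sudakov-szemeredi-vu, dousse, shao}, and Theorem~\ref{thm:main_result} enters the argument only at this final point.
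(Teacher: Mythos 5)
Your greedy framework (accumulate $k$ elements per round, iterate $\Theta(\log|A|)$ rounds) and your key observation --- apply the $k$-configuration theorem to a set $S$ with the property that $2a \notin A$ for every $a \in S$, so that the pairwise sums $x_i + x_j = 2\cdot\frac{x_i+x_j}{2}$ of the configuration avoid $A$ --- are both correct and match the paper in spirit (the paper arranges $(2\cdot X') \cap Y = \varnothing$ in Proposition~\ref{prop:disjoint_dilate}). However, the \emph{method} you use to localise to a set where this filtering succeeds is genuinely different from the paper's, and I believe it has a real gap.

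The gap is in your scale-selection step. You ask for nested subintervals $I_t$ of $[N_0]$, $N_0 = |A|^{O(1)}$, each of size $|A|^{\Omega(1)}$, inside which $A$ has density at least $\beta = 1/\mathrm{polylog}(|A|)$. But a generic $A$ of size $n$ in $[N_0]$ (say a Sidon-like set spread roughly uniformly) has density $\approx 1/n^{\Omega(1)}$ in \emph{every} interval of size $n^{\Omega(1)}$, which is far below $1/\mathrm{polylog}(n)$; with such a density Theorem~\ref{thm:main_result} gives nothing, since $\exp(Ck^{68}\cL(\alpha)^{16})$ with $\cL(\alpha) = \Theta(\log n)$ already exceeds $N_0$ for $k = 2$. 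So for many $A$ the ``dense interval'' branch simply never fires, and the entire weight of the argument falls on the ``too spread out'' branch, which you dispose of in one sentence. It is precisely in that complementary branch that the additive-combinatorial work is done: the paper (and SSV, Dousse, Shao, Sanders before it) does not seek a dense \emph{interval} at all, but a dense \emph{additively structured} set. Concretely, Proposition~\ref{prop:disjoint_dilate} (an SSV-type Balog--Szemer\'edi--Gowers/Pl\"unnecke argument) produces $X' \subseteq X$ with $|X'| \gg k^{-29}|X|$, $|X'+X'| \ll k^{181}|X'|$ and $(2\cdot X')\cap Y = \varnothing$; the small-doubling bound is then what allows Ruzsa's embedding lemma (Lemma~\ref{lm:ruzsa_embedding}, via Pl\"unnecke--Ruzsa) to place a positive fraction of $X'$ Freiman-$2$-isomorphically into $\mathbb{Z}/N\mathbb{Z}$ with $N \ll k^{724}|X'|$, giving density $\gg k^{-724} = 1/\mathrm{polylog}(n)$ for free, after which Theorem~\ref{thm:general_groups} applies. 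This chain --- BSG/Pl\"unnecke to get small doubling and disjointness of the dilate, then Ruzsa modelling to get density --- is the engine of the extraction step (Proposition~\ref{prop:extract_sum_free}), and it is what your interval filtering tries to replace without a mechanism for boosting the density past $1/n^{\Omega(1)}$. In short: sum-free extraction is not a Roth-style density-increment on subintervals; it requires localising by additive structure, not by position, and that is the step your sketch is missing.

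A smaller remark: ``replacing $A$ by a Freiman $2$-isomorphic image \ldots one may assume $A \subseteq [N_0]$ with $N_0 = n^{O(1)}$'' is not available for a general $A$ without first passing to a subset of small doubling (this is again Lemma~\ref{lm:ruzsa_embedding}, which only applies once $|2A-2A|$ is under control). For arbitrary $A$ one can only model \emph{half} of $A$ into $\mathbb{Z}/N\mathbb{Z}$ with $N \asymp |2A-2A|$, which can be as large as $n^4$, and this again lands you at ambient density $1/n^{\Omega(1)}$ --- the same obstacle. The paper sidesteps this by applying the embedding only after the small-doubling set $X'$ has been extracted.
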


Theorem \ref{thm:erdos_moser} implies that $\phi(n) = \Omega((\log n)^{1+c})$ with $c = \frac{1}{69}$, so we recover a bound of the same shape as in \cite[Theorem 1.2]{sanders}. We do not claim any improvement in the value of $c$, though our constant potentially has the minor advantage of being simpler to calculate. We should note, however, that the approach to the Erdős--Moser problem via $k$-configurations is a priori limited to the range $c < 1$. Our arguments could probably be optimised and as a result one could bring the value of $c$ closer to $1$. It is not clear whether one can get arbitrarily close to $1$; one seemingly runs into similar obstacles as when attempting to improve the exponent in the Kelley--Meka bound for Roth's theorem. For a summary of how the bounds on $F$ influence those on $\phi$, we refer the reader to \cite[\S2]{sanders}.

The key technical input used in the proof of Theorem \ref{thm:main_result} is the graph counting lemma of Filmus et al.\ \cite[Theorem 2.1]{filmus-hatami-hosseini-kelman}. Our improvement over the the work \cite{filmus-hatami-hosseini-kelman} is twofold. First, \cite{filmus-hatami-hosseini-kelman} deals with binary systems of linear forms, which are defined to be collections of linear forms in a fixed set of variables $x_1,\ldots,x_k$ such that each form depends on \emph{exactly} two variables and no two forms depend on the same pair of variables. On the other hand, each of the linear forms defining a $k$-configuration is supported on two variables, but some of them actually depend on a \emph{single} variable. At first, one might think that this requires merely a formal change to the arguments of \cite{filmus-hatami-hosseini-kelman}, for instance allowing self-loops in the graphs being counted by \cite[Theorem 2.1]{filmus-hatami-hosseini-kelman}. However, this is not the case. In a qualitative sense, we use \cite[Theorem 2.1]{filmus-hatami-hosseini-kelman} as a black box, but to overcome this difficulty, we have to incorporate some further arguments in the style of \cite{kelley-meka} and \cite{bloom-sisask}. Nevertheless, in order to obtain Theorem \ref{thm:main_result} as stated, we do have to make some improvements to the graph counting lemma of Filmus et al., but these happen purely at a quantitative level. To be specific, the dependence of the density increment parameter $\delta$ on the number of variables $k$ is not made explicit in \cite[Theorem 2.1]{filmus-hatami-hosseini-kelman}. In fact, their argument yields an exponential dependence on $k$, the primary aim of that paper being a good dependence on the density $\alpha$. However, for our application to the Erdős--Moser problem, a polynomial dependence on $k$ is paramount. Luckily, the arguments of \cite{filmus-hatami-hosseini-kelman} can be modified so as to obtain a density increment of the required strength.

The rest of the paper is organised as follows. In Section \ref{sec:graph_counting}, we establish a version of the graph counting lemma which is suitable for our application. Section \ref{sec:k_config} is devoted to the proof of Theorem \ref{thm:main_result} and occupies the main bulk of the paper. In fact, we deduce Theorem \ref{thm:main_result} as a direct consequence of Theorem \ref{thm:general_groups}, which deals with general finite abelian groups. We first give a high-level overview of the argument in the finite field setting and then proceed to transfer these arguments to the general case using the machinery of Bohr sets. In Section \ref{sec:erdos_moser}, we establish Theorem \ref{thm:erdos_moser} as a consequence of Theorem \ref{thm:main_result}. In Appendix \ref{app:aux_kelley_meka}, we collect some variants of the Kelley--Meka arguments that are needed in our proofs, but do not appear elsewhere. Finally, Appendix \ref{app:bohr_sets} contains the relevant background material on Bohr sets.

\newpage

\noindent\textbf{Notational conventions.} Notationwise, we mostly follow \cite{bloom-sisask}. In particular, we use Vinogradov asymptotic notation. Hence, given quantities $A$ and $B$, we write $A \ll B$ to mean $A = O(B)$, that is to say there exists an absolute constant $C > 0$ such that $|A| \leq C|B|$. This is equivalent to the notation $B \gg A$, i.e.\ $B = \Omega(A)$. We use $A \asymp B$ to denote that $A \ll B$ and $A \gg B$ hold simultaneously. 

Given a positive integer $M$, we abbreviate the set $\{1,\ldots,M\}$ to $[M]$. If $X$ is a finite non-empty set, we will use the averaging notation $\E_{x\in X}$ to denote $\frac{1}{|X|}\sum_{x\in X}$. If $X$ has the form of a $k$-fold Cartesian product $Y^k$, we may sometimes use $\E_{y_1,\ldots,y_k\in Y}$ instead of $\E_{y\in Y^k}$. Logarithmic factors are ubiquitous in the paper, so it will be convenient to use the abbreviation $\cL(\cdot)$ for the function $\log(2/\cdot)$ on the interval $(0,1]$. 

Given sets $A, B$ in an ambient abelian group $G$, we define their \emph{sumset} and \emph{difference set} by
\[A + B = \{a+b \mid a\in A,\ b \in B\}, \quad A-B = \{a-b \mid a\in A,\ b \in B\}\]
respectively. If $k$ is an integer, we write $k \cdot A = \{ka \mid a \in A\}$. This is the image of $A$ under the multiplication-by-$k$ map 
\[\psi_k \colon G \to G, \quad x \mapsto kx\] 
and should not be confused with $kA$, which is defined to be the $k$-fold sumset
\[\underbrace{A+\ldots+A}_{k\text{ times}}.\]

Unless otherwise stated, $G$ will denote a finite abelian group, written additively. By default, $G$ will be equipped with the uniform probability measure and this normalisation will be used to define $\mathbb{L}^p$-norms and inner products of complex-valued functions on $G$. Given two such functions $f$ and $g$, we also define their convolution and difference convolution to be
\[f*g \colon G \to \mathbb{C},\ x \mapsto \E_{x\in G}f(y)g(x-y), \quad\quad f \circ g \colon G \to \mathbb{C},\ x \mapsto \E_{y\in G}f(y)\overline{g(y-x)}\]
respectively. Given a further function $h \colon G \to \mathbb{C}$, we will frequently (sometimes without mention) make use of the adjoint property
\[\langle f*g,h\rangle = \langle f,h\circ g\rangle.\]
The Fourier transform plays a minor role in the paper -- it makes an appearance only in Appendix \ref{app:aux_kelley_meka}. For the sake of completeness, we define the Fourier transform of $f$ to be
\[\widehat{f} \colon \widehat{G} \to \mathbb{C},\quad \gamma \mapsto \E_{x\in G}f(x)\overline{\gamma(x)},\]
where $\widehat{G}$ denotes the group of characters of $G$, which will always carry the counting measure.

It will also be useful to consider various other probability measures on $G$. Given a function $\mu \colon G \to [0,\infty)$ such that $\lVert \mu\rVert_1 = 1$, we will identify $\mu$ with a probability measure in the obvious way. Thus, it will be convenient to make the slight abuse of terminology of calling $\mu$ a probability measure\footnote{A more correct term would be \emph{probability density function}.} and writing $\mu(A)$ for the corresponding measure of a set $A$. Hence, we have
\[\mu(A) = \E_{x\in G}1_A(x)\mu(x)\]
and the corresponding $\mathbb{L}^p$-norms and inner products are given by
\[\lVert f\rVert_{\mathbb{L}^p(\mu)} =\begin{cases}\Bigl(\E_{x\in G}|f(x)|^p\mu(x)\Bigr)^{1/p} & \text{if } p \in [1,\infty)\\\max_{x\in\text{supp}\mu}|f(x)| & \text{if } p = \infty\end{cases}, \quad \langle f,g\rangle_{\mathbb{L}^2(\mu)} = \E_{x\in G}f(x)\overline{g(x)}\mu(x).\]
Henceforth, unless specified otherwise, $\mu$ will denote the uniform probability measure on $G$. A particularly important class of probability measures arises from uniform distributions on subsets of $G$, so given a non-empty set $S \subseteq G$, we write $\mu_S = \mu(S)^{-1}1_S$ for the normalised indicator function of $S$. Thus, for example, $\mu_S(A)$ is the density of a subset $A \subseteq G$ in $S$.

\section{A quantitatively improved graph counting lemma}\label{sec:graph_counting}

The purpose of this section is to establish the following version of \cite[Theorem 2.1]{filmus-hatami-hosseini-kelman}. This is achieved by uncovering and improving various quantitative dependencies on $m$ and $\varepsilon$. In what follows, by an \emph{oriented graph} we mean a directed graph obtained by orienting the edges of a simple undirected graph. In particular, neither parallel edges nor cycles of length at most $2$ are allowed. The degree of a vertex in such a graph is defined to be its degree in the underlying undirected graph.

\begin{theorem}
\label{thm:graph_counting}
Let $\varepsilon \in (0,1]$ and let $m$ be a positive integer. Then the following holds with $\delta = \delta(\varepsilon, m) = \varepsilon^2m^{-2}/16000$. Let $\alpha \in (0,1]$ and let $H = ([k],E)$ be an oriented graph with $m$ edges such that for each $(i,j) \in E$ we have $i < j$. Let $X_1,\ldots,X_k$ be finite non-empty sets and for each $(i,j) \in E$ let $A_{i,j} \subseteq X_i \times X_j$ be a subset of density $\alpha_{i,j} \geq \alpha$. If
\[\Biggl|\E_{(x_1,\ldots,x_k)\in X_1\times\ldots\times X_k}\prod_{(i,j)\in E}1_{A_{i,j}}(x_i,x_j)-\prod_{(i,j)\in E}\alpha_{i,j}\Biggr| \geq \varepsilon\prod_{(i,j)\in E}\alpha_{i,j},\]
then there exists an edge $(i,j) \in E$ such that
\begin{enumerate}[(i)]
    \item either there exist $S \subseteq X_i, T \subseteq X_j$ of densities at least $\exp(-O(\varepsilon^{-2}m^3\cL(\alpha)^2))$ such that
    \[\E_{(x,y)\in S\times T}1_{A_{i,j}}(x,y) \geq (1+\delta)\alpha_{i,j};\]
    \item or there exists $S \subseteq X_i$ of density at least $\exp(-O(m\cL(\varepsilon m^{-1})\cL(\alpha)))$ such that for all $x \in S$ we have
    \[\E_{y\in X_j}1_{A_{i,j}}(x,y) \leq (1-\delta)\alpha_{i,j}.\]
\end{enumerate}
\end{theorem}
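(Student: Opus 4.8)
The plan is to establish the contrapositive in iterated form: assuming that conclusion~(ii) fails for every edge, I will produce an edge satisfying~(i) by a density-increment argument. One maintains subsets $S_i \subseteq X_i$, with $S_i = X_i$ to begin with, and at each round considers the count $\E_{\vec x \in \prod_l S_l}\prod_{(i,j)\in E}1_{A_{i,j}}(x_i,x_j)$ together with the densities $\beta_{i,j}$ of $A_{i,j}$ in $S_i \times S_j$. As long as this count deviates from $\prod_{(i,j)\in E}\beta_{i,j}$ by more than, say, $\tfrac{\varepsilon}{2}\prod_{(i,j)\in E}\beta_{i,j}$, I claim one can pass to subsets $S_i' \subseteq S_i$ and $S_j' \subseteq S_j$, each of density bounded below in terms of $m$ and $\cL(\alpha)$ only, on which $\beta_{i,j}$ grows by a factor $1+\delta$, and then recurse. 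Because every $\beta_{i,j}$ lies in $[\alpha,1]$, no single density can be boosted more than $O(\delta^{-1}\cL(\alpha))$ times, so with $\delta \asymp (\varepsilon/m)^2$ the loop runs for only polynomially in $m$, $\varepsilon^{-1}$ and $\cL(\alpha)$ many rounds; composing the per-round density losses then yields the density bound in~(i). If the loop halts, the count for the current sets no longer deviates, and a short telescoping computation shows this contradicts the deviation in the hypothesis — unless conclusion~(i) has already been reported, so in all cases we are done.

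The engine of a single round converts the count deviation into a box density increment for one edge. A hybrid argument — replacing $1_{A_{i,j}}$ by the constant $\beta_{i,j}$, one edge at a time, and using the triangle inequality — yields an edge $(i,j)$ and a function $g \colon S_i \times S_j \to [0,1]$, namely the conditional expectation over the remaining variables of the product of the remaining indicators, with $|\langle 1_{A_{i,j}} - \beta_{i,j}, g\rangle|$ (inner product over $S_i \times S_j$) at least a fixed multiple of $\tfrac{\varepsilon}{m}$ times the relevant product of densities. Here is where the graph structure is essential: once $x_i$ and $x_j$ are fixed, every remaining factor $1_{A_e}$ depends on at most one of the other variables, so for each assignment $\vec w$ of those variables the remaining product equals $c(\vec w)\,1_{P(\vec w)}(x_i)\,1_{Q(\vec w)}(x_j)$ with $c(\vec w) \in \{0,1\}$ and sets $P(\vec w) \subseteq S_i$, $Q(\vec w) \subseteq S_j$. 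Thus $g$ is an average of indicators of combinatorial boxes, and expanding the inner product against this representation exhibits it as a weighted average, over $\vec w$ with weight $c(\vec w)\mu(P(\vec w))\mu(Q(\vec w))$, of the deviation of the density of $A_{i,j}$ on $P(\vec w) \times Q(\vec w)$ from $\beta_{i,j}$.

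Discarding the $\vec w$ for which $P(\vec w)$ or $Q(\vec w)$ is very small — their total weight is at most the cutoff, which can be taken well above the density bound in~(i) while keeping the weighted average substantially nonzero — leaves a single box $P \times Q$ with $\mu(P),\mu(Q)$ not too small on which the density of $A_{i,j}$ differs from $\beta_{i,j}$ by a fixed multiple of $\varepsilon/m$. If this density exceeds $(1+\delta)\beta_{i,j}$, take $S_i' = P$, $S_j' = Q$. The delicate case is a box on which $A_{i,j}$ is sparse; following the argument of Filmus et al.\ (itself following Kelley--Meka), one converts sparseness into a genuine density increment on some box — passing to the complementary columns $S_j \setminus Q$ when these are non-negligible, and using that the rows of $P$ cannot all have small degree into $S_j$ — unless a degenerate alternative holds, in which case one directly produces a large set of rows of $X_i$ having small degree into $X_j$, which is conclusion~(ii). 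Since this alternative is the cheapest to produce, it is efficient — and, it turns out, forced by the bookkeeping — to test for it once at the very start, before any ambient set has been restricted, so that inside the iteration one always lands on a density increment.

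The main obstacle is entirely quantitative, and it is the whole point of this section: the argument of Filmus et al.\ as written gives a density increment $\delta$ that decays exponentially in the number of variables, whereas the application to the Erdős--Moser problem needs $\delta$ polynomial in $\varepsilon/m$ — the stated $\delta = \varepsilon^2 m^{-2}/16000$. Securing this means running the hybrid step, the box pruning, the degree cleaning and the sparse-to-dense conversion with thresholds that are all polynomially interrelated, so that each step costs only a single $\cL(\alpha)$- or $\cL(\varepsilon m^{-1})$-sized factor in the exponent rather than many of them, and then tracking how these combine over the rounds to obtain the precise density bounds in~(i) and~(ii). A further point needed for the iteration to close is that restricting the ambient sets can destroy the degree regularity supplied by the failure of~(ii); one sidesteps this by only ever passing to boxes on which the relevant degrees are already controlled, which is exactly why the sparse-box analysis is carried out in terms of degrees rather than plain box densities.
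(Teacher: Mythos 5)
Your proposal takes a genuinely different route from the paper, and as written it has a gap that I do not see how to close. The paper does not run a density-increment iteration on boxes. It proves a non-iterative statement (Lemma~\ref{lm:main_technical_lemma}, an improvement of the Filmus--Hatami--Hosseini--Kelman Lemma~2.8) by \emph{induction on $\kappa(H)$}, the sum of degrees of vertices of degree at least $2$. The inductive step repeatedly peels off a vertex of outdegree $0$ or splits an edge off it, never restricting the ambient sets $X_v$, and the outcome is a deviation of a \emph{grid semi-norm} $\lVert f_{u,v}\rVert_{U(r,p)}$ or an $\mathbb{L}^p$-degree norm $\lVert \E_{y}f_{u,v}(\cdot,y)-\alpha_{u,v}\rVert_p$ for a not-too-large $p$. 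One \emph{single} application of this lemma, followed by the ancillary results \cite[Corollary 2.3]{filmus-hatami-hosseini-kelman} (grid-norm deviation $\Rightarrow$ box with increased density) and \cite[Lemma 2.7]{filmus-hatami-hosseini-kelman} ($\mathbb{L}^p$-degree deviation $\Rightarrow$ density or degree statement), yields Theorem~\ref{thm:graph_counting}. The parameter that must be controlled in $k$ is $\kappa(H)$, not a number of iteration rounds, and the gain over Filmus et al.\ comes from choosing $\varepsilon' = \frac{\kappa(H)-1}{\kappa(H)}\varepsilon$ rather than $\varepsilon/2$ in the inductive step, and from estimating $(1+\widetilde{\delta})^{\ell-1}$ instead of using $2^{\ell-1}$.

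The gap in your scheme is the termination and hypothesis-preservation of the iteration. After you restrict $S_i \to P$ and $S_j \to Q$ to boost $\beta_{i,j}$, the densities $\beta_{i,j'}$ and $\beta_{i',j}$ for other edges incident to $i$ or $j$ may \emph{decrease}, possibly dropping well below $\alpha$ or vanishing outright. Your claim that ``every $\beta_{i,j}$ lies in $[\alpha,1]$'' is therefore unjustified beyond round zero, and the potential argument ``no single density can be boosted more than $O(\delta^{-1}\cL(\alpha))$ times'' does not give termination, since a density that has been boosted can be driven back down as a side effect of boosting another edge. For the same reason, finding a box on which $A_{i,j}$ has density $(1+\delta)\beta_{i,j}$ in some later round does not obviously imply conclusion~(i), which requires density $(1+\delta)\alpha_{i,j}$ with $\alpha_{i,j}$ the \emph{original} density; if $\beta_{i,j}$ has been diluted below $\alpha_{i,j}$ in the meantime, a per-round boost does not suffice. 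Finally, your route from a sparse box to conclusion~(ii) --- a set $S\subseteq X_i$ on which \emph{every} row has degree into the original $X_j$ below $(1-\delta)\alpha_{i,j}$ --- is not substantiated; after restriction you control degrees into $S_j$, not into $X_j$, and a sparse box only gives an averaged statement, whereas (ii) is a pointwise one. The paper obtains (ii) from an $\mathbb{L}^p$-deviation of the degree function together with a dyadic threshold argument (via \cite[Lemma 2.7]{filmus-hatami-hosseini-kelman}), and you would need an analogue of that pointwise-cleaning step, which the proposal does not supply.
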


To obtain Theorem \ref{thm:graph_counting}, we have to make an improvement to the key technical ingredient in the proof of \cite[Theorem 2.1]{filmus-hatami-hosseini-kelman}, namely \cite[Lemma 2.8]{filmus-hatami-hosseini-kelman}. We accomplish this by closely following the proof of \cite[Lemma 2.8]{filmus-hatami-hosseini-kelman} and making careful choices of parameters where necessary. Before doing so, we recall some notation from \cite[\S2]{filmus-hatami-hosseini-kelman}. For a function $f \colon X \to \mathbb{C}$ on a finite non-empty set $X$ and a parameter $p \in [1,\infty)$, we define the $\mathbb{L}^p$-norm of $f$ to be
\[\lVert f\rVert_p \vcentcolon= \Bigl(\E_{x\in X}|f(x)|^p\Bigr)^{1/p}.\]
In a similar vein, if $f \colon X \times Y \to \mathbb{R}$ is a function on the Cartesian product of finite non-empty sets $X,Y$ and $p, q$ are positive integers, we define the corresponding \emph{grid-semi-norm} of $f$ to be
\[\lVert f\rVert_{U(p,q)} \vcentcolon= \Biggl|\E_{x\in X^p,y\in Y^q}\prod_{i=1}^{p}\prod_{j=1}^{q}f(x_i,y_j)\Biggr|^{\frac{1}{pq}}.\]
All relevant properties of grid-semi-norms can be found in \cite[\S2]{filmus-hatami-hosseini-kelman}. For an oriented graph $H$, define $\kappa(H) \vcentcolon= 2|E(H)| - d_1(H)$, where $d_1(H)$ is the number of vertices of degree $1$ in $H$. Observe that $\kappa(H)$ is equal to the sum of the degrees of all vertices of degree greater than $1$ in $H$. In particular, if all degrees in $H$ are at most $1$, then $\kappa(H) = 0$, and otherwise $\kappa(H) \geq 2$.

\begin{lemma}
\label{lm:main_technical_lemma}
Let $\alpha, \varepsilon \in (0,1]$ and let $H$ be an acyclic oriented graph with $m$ edges. Let $(X_v)_{v\in V(H)}$ be finite non-empty sets and for each $(u, v) \in E(H)$ let $f_{u,v} \colon X_u \times X_v \to \{0,1\}$ be a function with mean $\alpha_{u,v} \geq \alpha$. If
\[\Biggl|\E_{x\in\prod_{v\in V(H)}X_v}\prod_{(u,v)\in E(H)}f_{u,v}(x_u,x_v)-\prod_{(u,v)\in E(H)}\alpha_{u,v}\Biggr| \geq \varepsilon\prod_{(u,v)\in E(H)}\alpha_{u,v},\]
then writing $\widetilde{\delta} = \varepsilon^2\kappa(H)^{-2}/1000$, there exist an edge $(u, v) \in E(H)$ and a positive integer $p$ such that
\begin{enumerate}[(i)]
    \item either there exists a positive integer $r$ such that $pr \leq 8\widetilde{\delta}^{-1}m\cL(\alpha)$ and $\lVert f_{u,v}\rVert_{U(r,p)} \geq (1+\widetilde{\delta})\alpha_{u,v}$;
    \item or $p \leq 8m\cL(\alpha)$ and $\lVert \E_{y\in X_v}f_{u,v}(\cdot,y)-\alpha_{u,v}\rVert_p \geq \widetilde{\delta}\alpha_{u,v}$.
\end{enumerate}
\end{lemma}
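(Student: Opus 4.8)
\emph{Overview.} The argument mirrors the proof of \cite[Lemma 2.8]{filmus-hatami-hosseini-kelman}; since the present lemma differs from that result only in its explicit and strengthened quantitative dependencies, the plan is to retrace that proof while keeping every parameter in sight and making the most economical choice at each branch. We proceed by a recursion whose depth is controlled by $\cL(\alpha)$, the base cases being those configurations — for instance, when the underlying graph is a matching — for which the hypothesis cannot hold and there is nothing to prove.

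\emph{Step 1: reduction to a single pendant edge.} Because $H$ is acyclic, its edges may be listed as $e_1,\ldots,e_m$ so that each $e_j$ is pendant in the subgraph $H_j$ spanned by $e_j,\ldots,e_m$; this is the order of removal when one repeatedly deletes a degree-$1$ vertex. Telescoping $\prod_{e}f_e$ along this list and using the hypothesis produces an index $j$ with
\[\Bigl|\E_x\bigl[(f_{e_j}-\alpha_{e_j})\textstyle\prod_{i>j}f_{e_i}\bigr]\Bigr|\ \geq\ \frac{\varepsilon}{m}\prod_{i\geq j}\alpha_{e_i}.\]
Writing $e_j=(u,v)$ with $u$ the pendant endpoint, the variable $x_u$ occurs in none of the factors $f_{e_i}$ with $i>j$, so averaging over $x_u$ turns $f_{e_j}-\alpha_{e_j}$ into $d-\alpha_{u,v}$, where $d(y)=\E_{x}f_{u,v}(x,y)$ is the marginal of $f_{u,v}$ onto the $v$-coordinate. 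We arrive at
\[\bigl|\E_{x_v}\bigl[(d(x_v)-\alpha_{u,v})\,\Phi(x_v)\bigr]\bigr|\ \geq\ \frac{\varepsilon}{m}\,\alpha_{u,v}\,\beta,\qquad \beta:=\prod_{i>j}\alpha_{e_i},\]
where $\Phi(x_v)=\E[\prod_{i>j}f_{e_i}\mid x_v]$ takes values in $[0,1]$ with mean $\beta':=\E_x\prod_{i>j}f_{e_i}$; here the non-isolated case $\deg_{H_j}(v)\geq 2$ is forced, since otherwise the left-hand side vanishes.

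\emph{Step 2: the dichotomy.} Since $\E_{x_v}(d-\alpha_{u,v})=0$, we may recentre and apply Hölder's inequality with a conjugate pair $(p,p')$ to obtain $\frac{\varepsilon}{m}\alpha_{u,v}\beta\leq\lVert d-\alpha_{u,v}\rVert_{p}\,\lVert\Phi-\beta'\rVert_{p'}$. If $\lVert d-\alpha_{u,v}\rVert_{p}\geq\widetilde{\delta}\,\alpha_{u,v}$ for the value of $p$ chosen here — which we keep as small as the argument permits — we land in conclusion (ii). Otherwise $\lVert\Phi-\beta'\rVert_{p'}\gg\varepsilon(m\widetilde{\delta})^{-1}\beta$, and we must extract a large grid-semi-norm of one of the $f_{e_i}$ from a lower bound on the $\mathbb{L}^{p'}$-deviation of $\Phi$. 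Expanding $\E_{x_v}(\Phi-\beta')^{p'}$ into a signed combination of the moments $\E_{x_v}\Phi^{l}$ for $0\leq l\leq p'$, and using that $\Phi(x_v)^{l}$ is the conditional expectation, given $x_v$, of a product over $l$ independent copies of the remaining variables, each $\E_{x_v}\Phi^{l}$ is — up to a constant factor contributed by the components of $H_{j+1}$ not meeting $v$ — the copy-count of the acyclic graph obtained by gluing $l$ disjoint copies of the component of $v$ in $H_{j+1}$ along the vertex $v$. Feeding these graphs back into the recursion (together, when a marginal turns out to be elevated on a large set rather than small, with a density-increment step restricting the relevant vertex sets and raising the ambient density) eventually exhibits an edge $e$ and parameters witnessing conclusion (i) or (ii) for $f_e$.

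\emph{Bookkeeping and the main obstacle.} The entire content of the lemma, relative to \cite{filmus-hatami-hosseini-kelman}, is the control of $p$, $r$ and $\widetilde{\delta}^{-1}$ along this recursion: the factor $m$ records that each of the $m$ edges is processed at most once; the factor $\cL(\alpha)$ records that the density-increment steps can only raise a density within $[0,1]$ boundedly often; and the additional $\widetilde{\delta}^{-1}$ in (i) records that these increments are of relative size $\widetilde{\delta}$. The delicate part — and the reason the argument of \cite{filmus-hatami-hosseini-kelman} yields only an exponential dependence on the number of variables — is that an incautious Cauchy--Schwarz at each branch multiplies the number of copies, so that the parameters run away like $2^{\Theta(\kappa(H))}$ and the admissible $\widetilde{\delta}$ collapses. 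To secure $\widetilde{\delta}\asymp\varepsilon^{2}\kappa(H)^{-2}$ one must take the minimal number of copies at each Cauchy--Schwarz step — typically two, or a number fixed by $\cL(\alpha)$ — and pigeonhole over the $O(\kappa(H))$ branches so as to keep a single factor $\varepsilon^{2}$ rather than accumulating one $\varepsilon$-power per branch; this is precisely where the arguments of \cite{kelley-meka} and \cite{bloom-sisask} are used. Carrying these choices through, and absorbing lower-order terms via the crude estimates $\lVert\Phi-\beta'\rVert_{\infty}\leq 1$ and $\beta\geq\alpha^{m}$, produces the stated constants and exponents.
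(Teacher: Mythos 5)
The proposal diverges from the paper's argument in a way that introduces a genuine gap. The paper's proof of Lemma \ref{lm:main_technical_lemma} proceeds by induction on $\kappa(H)$: it peels a sink vertex $s$ of outdegree $0$, passes to $H' = H \setminus s$ with $\kappa(H') \leq \kappa(H) - \ell$, and in the main case $\ell \geq 2$ it splits off a single edge at $s$ to form an auxiliary graph $H''$, compares the two counts, and then applies H\"older, Cauchy--Schwarz, the Gowers--Cauchy--Schwarz inequality, and the spectral positivity lemma \cite[Lemma~2.9]{filmus-hatami-hosseini-kelman} to reach a grid-norm conclusion in one pass. Your Step~2 instead proposes to expand $\lVert \Phi-\beta'\rVert_{p'}^{p'}$ as a signed combination of moments $\E_{x_v}\Phi^{l}$, interpret each moment as the homomorphism count of a graph obtained by gluing $l$ copies along $v$, and ``feed these graphs back into the recursion.'' That plan does not close: the glued graph has on the order of $p'\cdot m$ edges, so your recursion is on graphs that are much larger than $H$, not strictly smaller in any of the quantities ($m$, $\kappa$) that the lemma's conclusion is sensitive to; you would therefore not obtain the stated bounds $pr \leq 8\widetilde{\delta}^{-1}m\cL(\alpha)$, $p \leq 8m\cL(\alpha)$, and it is unclear why the process terminates at all. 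This blow-up is precisely what the grid-norm machinery in \cite{filmus-hatami-hosseini-kelman} is designed to avoid.

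There is also a structural misplacement: you invoke ``a density-increment step restricting the relevant vertex sets and raising the ambient density'' inside the proof of this lemma, but the lemma has no density increment. The density increment appears only in the deduction of Theorem~\ref{thm:graph_counting} from Lemma~\ref{lm:main_technical_lemma} (via \cite[Corollary~2.3]{filmus-hatami-hosseini-kelman} and \cite[Lemma~2.7]{filmus-hatami-hosseini-kelman}). Finally, your bookkeeping account of where $\widetilde{\delta} \asymp \varepsilon^2\kappa(H)^{-2}$ comes from (``pigeonhole over $O(\kappa(H))$ branches'' and ``minimal number of copies at Cauchy--Schwarz'') is too diffuse to substitute for the two concrete improvements the paper actually makes: replacing the per-step loss $\varepsilon' = \varepsilon/2$ of \cite{filmus-hatami-hosseini-kelman} by $\varepsilon' = \tfrac{\kappa(H)-1}{\kappa(H)}\varepsilon$ so that only a single factor $\varepsilon/\kappa(H)$ is lost over the whole induction, and replacing the crude bound $(1+\widetilde{\delta})^{\ell-1} \leq 2^{\ell-1}$ by $(1+\widetilde{\delta})^{\ell-1} \leq 3$. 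The quadratic loss in $\varepsilon/\kappa(H)$ then enters once, in the application of \cite[Lemma~2.9]{filmus-hatami-hosseini-kelman} ($\lVert J\rVert_{U(2,p)}\geq\eta$ yields $\lVert 1+J\rVert_{U(2,p')}\geq 1+\eta^2/5$ with $p' = 2\lceil p/\eta^2\rceil$), not through an accumulation over branches.
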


\begin{proof}
    We will be fairly brief since the argument overlaps heavily with that of \cite{filmus-hatami-hosseini-kelman}; we indicate the key changes that have to be made. We proceed by induction on $\kappa(H)$. As in \cite{filmus-hatami-hosseini-kelman}, we may reduce to the case when $H$ has no isolated vertices/edges, so in particular $\kappa(H) \geq 2$. For each $(u,v) \in E(H)$, consider the normalised variant of $f_{u,v}$ given by $F_{u,v} \vcentcolon= \alpha_{u,v}^{-1}f_{u,v}$. Since $H$ is acyclic, it contains vertex of outdegree $0$, call it $s$. Let $u_1,\ldots,u_{\ell}$ be the in-neighbours of $s$, where $\ell \geq 1$. Let $H'$ be the graph obtained by removing $s$ from $H$. Since $H$ has no isolated edges, it follows that $d_1(H') \geq d_1(H)-\ell$ and hence
    \[\kappa(H') = 2|E(H')| - d_1(H') \leq 2(|E(H)|-\ell) - (d_1(H)-\ell) = \kappa(H)-\ell.\]
    Thus, we can make the assumption that
    \begin{equation}\label{eq:ind_hyp}
        \Biggl|\E_{x\in\prod_{v\in V(H')}X_v}\prod_{(u,v)\in E(H')}F_{u,v}(x_u,x_v)-1\Biggr|\leq\varepsilon',
    \end{equation}
    where we define\footnote{Note the change in the choice of $\varepsilon'$ here -- in \cite{filmus-hatami-hosseini-kelman}, the choice $\varepsilon' = \varepsilon/2$ was made.}
    \[\varepsilon' \vcentcolon= \frac{\kappa(H)-1}{\kappa(H)}\varepsilon.\]
    Indeed, if \eqref{eq:ind_hyp} doesn't hold, then we may finish by applying the induction hypothesis to $H'$. In particular, we may certainly assume that
    \[\E_{x\in\prod_{v\in V(H')}X_v}\prod_{(u,v)\in E(H')}F_{u,v}(x_u,x_v) \leq 2.\]
    If we let $p \vcentcolon= 4\lceil m\cL(\alpha)\rceil$ and $q$ be such that $\frac{1}{p}+\frac{1}{q} = 1$, then we may use the above as in \cite{filmus-hatami-hosseini-kelman} to bound
    \begin{equation}\label{eq:bounded_by_three}
        \Biggl(\E_{x\in\prod_{v\in V(H')}X_v}\prod_{(u,v)\in E(H')}F_{u,v}(x_u,x_v)^q\Biggr)^{1/q} \leq 3.
    \end{equation}
    
    We now turn to the main part of the argument. Consider first the case when $\ell = 1$. Then \eqref{eq:ind_hyp} implies
    \[\Biggl|\E_{x\in\prod_{v\in V(H)}X_v}\prod_{(u,v)\in E(H)}F_{u,v}(x_u,x_v)-\E_{x\in\prod_{v\in V(H')}X_v}\prod_{(u,v)\in E(H')}F_{u,v}(x_u,x_v)\Biggr| \geq \varepsilon-\varepsilon',\]
    which can be rewritten as
    \[\Biggl|\E_{x\in\prod_{v\in V(H')}X_v}\prod_{(u,v)\in E(H')}F_{u,v}(x_u,x_v)\Bigl(\E_{x_s\in X_s}F_{u_1,s}(x_{u_1},x_s)-1\Bigr)\Biggr| \geq \frac{\varepsilon}{\kappa(H)}.\]
    Using Hölder's inequality and the estimate \eqref{eq:bounded_by_three} as in \cite{filmus-hatami-hosseini-kelman}, we obtain that
    \[\Bigl(\E_{x\in X_{u_1}}\Bigl(\E_{y\in X_s}F_{u_1,s}(x,y)-1\Bigr)^p\Bigr)^{1/p} \geq \frac{\varepsilon}{3\kappa(H)} > \widetilde{\delta},\]
    which means that (ii) holds. 
    
    Suppose now that $\ell \geq 2$. Let $H''$ be the graph obtained from $H$ by adding a vertex $s'$, removing the edge $(u_1,s)$ and adding the edge $(u_1,s')$; we also let $X_{s'} \vcentcolon= X_s$ and $f_{u_1,s'} \vcentcolon= f_{u_1,s}$. Then note that $d_1(H'') \geq d_1(H)+1$, so we have
    \[\kappa(H'') = 2|E(H'')| - d_1(H'') \leq 2|E(H)| - (d_1(H)+1) = \kappa(H)-1.\]
    Thus, we may assume that
    \[\Biggl|\E_{x\in\prod_{v\in V(H'')}X_v}\prod_{(u,v)\in E(H'')}F_{u,v}(x_u,x_v)-1\Biggr|\leq\varepsilon'\]
    as otherwise we are done by the induction hypothesis applied to $H''$. By the triangle inequality, it follows that
    \[\Biggl|\E_{x\in\prod_{v\in V(H'')}X_v}\prod_{(u,v)\in E(H'')}F_{u,v}(x_u,x_v)-\E_{x\in\prod_{v\in V(H)}X_v}\prod_{(u,v)\in E(H)}F_{u,v}(x_u,x_v)\Biggr| \geq \varepsilon-\varepsilon'.\]
    This can be rewritten as
    \[\Biggl|\E_{x\in\prod_{v\in V(H')}X_v}\prod_{(u,v)\in E(H')}F_{u,v}(x_u,x_v)\E_{x_s\in X_s}J(x_{u_1},x_s)\prod_{i=2}^{\ell}F_{u_i,s}(x_{u_i},x_s)\Biggr| \geq \frac{\varepsilon}{\kappa(H)},\]
    where we define
    \[J \colon X_{u_1}\times X_s \to \mathbb{R}, \quad (x,y) \mapsto F_{u_1,s}(x,y) - \E_{y'\in X_s}F_{u_1,s}(x,y').\]
    We may now use Hölder's inequality, the Cauchy--Schwarz inequality, the Gowers--Cauchy--Schwarz inequality \cite[Lemma 2.5]{filmus-hatami-hosseini-kelman} and the estimate \eqref{eq:bounded_by_three} in the same way as in \cite{filmus-hatami-hosseini-kelman} to bound the left-hand side. Hence, we obtain
    \[\lVert J\rVert_{U(2,p)}\prod_{i=2}^{\ell}\lVert F_{u_i,s}\rVert_{U(2(\ell-1),p)} \geq \frac{\varepsilon}{3\kappa(H)}.\]
    If there exists $i \in \{2,\ldots,\ell\}$ such that $\lVert F_{u_i,s}\rVert_{U(2(\ell-1),p)} \geq 1+\widetilde{\delta}$, then (i) holds with $r = 2(\ell-1)$. Thus, we may assume that $\lVert F_{u_i,s}\rVert_{U(2(\ell-1),p)} \leq 1+\widetilde{\delta}$ for all $i \in \{2,\ldots,\ell\}$, so $\lVert J\rVert_{U(2,p)} \geq \eta$, where
    \[\eta \vcentcolon= \frac{\varepsilon}{3\kappa(H)(1+\widetilde{\delta})^{\ell-1}} \geq \frac{\varepsilon}{10\kappa(H)}.\]
    The last inequality holds since\footnote{Note that in \cite{filmus-hatami-hosseini-kelman}, the weaker estimate $(1+\widetilde{\delta})^{\ell-1} \leq 2^{\ell-1}$ is applied.}
    \[(1+\widetilde{\delta})^{\ell-1} \leq \exp(\widetilde{\delta}(\ell-1)) \leq \exp\Bigl(\frac{\varepsilon^2}{1000\kappa(H)^2}\cdot\kappa(H)\Bigr) < \exp(1/1000) < 3.\]
    Applying \cite[Lemma 2.9]{filmus-hatami-hosseini-kelman}, we obtain that $\lVert 1+J\rVert_{U(2,p')} \geq 1+\eta^2/5$ with $p' \vcentcolon= 2\lceil p/\eta^2\rceil$. If we now let $D \vcentcolon= F_{u_1,s}-J$, then the triangle inequality implies that
    \[\lVert F_{u_1,s}\rVert_{U(2,p')} + \lVert D-1\rVert_{U(2,p')}\geq \lVert 1+J\rVert_{U(2,p')} \geq 1+2\widetilde{\delta},\]
    so either $\lVert F_{u_1,s}\rVert_{U(2,p')} \geq 1+\widetilde{\delta}$ or $\lVert D-1\rVert_{U(2,p')} \geq \widetilde{\delta}$. In the former case, it follows that (i) holds with $r = 2$ and $p'$ playing the role of $p$, so we are done. On the other hand, since $D(x,y)$ doesn't depend on $y$, the latter case degenerates to
    \[\Bigl\lVert \E_{y\in X_s}F_{u_1,s}(\cdot,y)-1\Bigr\rVert_2 \geq \widetilde{\delta},\]
    so (ii) holds. This concludes the proof.
\end{proof}

With the ancillary results of \cite[\S2]{filmus-hatami-hosseini-kelman} at hand, it is now a short step from Lemma \ref{lm:main_technical_lemma} to Theorem \ref{thm:graph_counting} -- we mimic the deduction of \cite[Theorem 2.1]{filmus-hatami-hosseini-kelman} from \cite[Lemma 2.8]{filmus-hatami-hosseini-kelman}.

\begin{proof}[Proof of Theorem \ref{thm:graph_counting}.]
    We apply Lemma \ref{lm:main_technical_lemma} with $f_{i,j} \vcentcolon= 1_{A_{i,j}}$ for $(i,j) \in E$. If (i) holds, then we are done by \cite[Corollary 2.3]{filmus-hatami-hosseini-kelman} applied with $K_{r,p}$ and $1_{A_{u,v}}$ in place of $H$ and $A$ respectively. Otherwise, if (ii) holds, then we apply \cite[Lemma 2.7]{filmus-hatami-hosseini-kelman} to the function
    \[f \colon X_u \to [0,1], \quad x \mapsto \E_{y\in X_v}1_{A_{u,v}}(x,y)\]
    to conclude that
    \begin{enumerate}[(a)]
        \item either $S \vcentcolon= \{x \in X_u \mid f(x) \geq (1+\widetilde{\delta}/4)\alpha_{u,v}\} \subseteq X_u$ has density at least 
        \[(\widetilde{\delta}\alpha_{u,v})^p/4 \geq \exp(-O(m\cL(\varepsilon m^{-1})\cL(\alpha)^2));\]
        \item or $S \vcentcolon= \{x \in X_u \mid f(x) \leq (1-\widetilde{\delta}/4)\alpha_{u,v}\} \subseteq X_u$ has density at least 
        \[\widetilde{\delta}^p/4 \geq \exp(-O(m\cL(\varepsilon m^{-1})\cL(\alpha))).\]
    \end{enumerate}
    In case (a), we see that the alternative (i) of Theorem \ref{thm:graph_counting} holds with $T = X_v$, whereas in case (b), it follows that the alternative (ii) of Theorem \ref{thm:graph_counting} holds.
\end{proof}

\section{Kelley--Meka bounds for sets free of \texorpdfstring{$k$}{k}-configurations}\label{sec:k_config}

In this section, we establish Theorem \ref{thm:main_result}. As mentioned in Section \ref{sec:intro}, our arguments naturally yield the following more general variant. It should be compared to \cite[Theorem 1.5]{filmus-hatami-hosseini-kelman}, which establishes an analogous bound for binary systems of linear forms. Our argument should likewise allow for more general coefficients in Theorem \ref{thm:general_groups}. However, since this is not our main goal and the paper is already quite technical in its current form, we do not pursue this here.

\begin{theorem}
\label{thm:general_groups}
Let $G$ be a finite abelian group of odd order and let $k \geq 2$ be an integer. Let $A \subseteq G$ be a subset of density $\alpha > 0$. Then
\begin{equation}\label{eq:k_config_count}
    \mathbb{P}_{x_1,\ldots,x_k\in G}\Bigl(\frac{x_i+x_j}{2} \in A \text{ for all } 1 \leq i \leq j \leq k\Bigr) \geq \exp(-O(k^{68}\cL(\alpha)^{16})).
\end{equation}
In particular, if $A$ contains no non-degenerate $k$-configuration, then 
\begin{equation}\label{eq:density_bound}
    |G| \leq \exp(O(k^{68}\cL(\alpha)^{16})).
\end{equation}
\end{theorem}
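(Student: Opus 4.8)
The plan is to run a density-increment argument with Theorem~\ref{thm:graph_counting} as its combinatorial engine, following the template of the Kelley--Meka proof of Roth's theorem (as streamlined by Bloom--Sisask). Since $|G|$ is odd, $\psi_2$ is an automorphism of $G$, so $\tfrac{x_i+x_j}{2}\in A$ is equivalent to $x_i+x_j\in B$ with $B\vcentcolon=2\cdot A$ (of density $\alpha$); moreover $\psi_2$ carries subspaces to subspaces and Bohr sets to Bohr sets of the same rank and radius, so any density increment obtained for $B$ on such a set transfers back to $A$ at no cost. As announced in the introduction, I would first run the argument in the finite-field model $G=\mathbb{F}_p^n$ ($p$ odd), where Bohr sets are subspaces and the bookkeeping is transparent, and then transfer it to general $G$ of odd order using the Bohr-set machinery of Appendix~\ref{app:bohr_sets}.

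The first move is to rewrite the probability in \eqref{eq:k_config_count} as the clique count
\[
\E_{x_1,\ldots,x_k\in G}\ \prod_{i=1}^{k}1_A(x_i)\prod_{1\le i<j\le k}1_B(x_i+x_j),
\]
and to absorb the $k$ ``diagonal'' constraints $1_A(x_i)$ into the $\binom{k}{2}$ bivariate indicators attached to the edges of the complete graph on $[k]$. Fixing a Hamilton cycle on $[k]$ and attributing the constraint ``$x_i\in A$'' to a single cycle-edge incident to $i$ --- a bijection between the $k$ cycle-edges and the $k$ vertices --- one obtains sets $A_{i,j}\subseteq X_i\times X_j$ whose product $\prod_{(i,j)}1_{A_{i,j}}(x_i,x_j)$ is exactly the integrand above, with each $A_{i,j}$ of density $\alpha$ (an undecorated chord) or $\alpha^2$ (a cycle-edge). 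In the finite-field model these densities persist inside any subspace, which is closed under averaging since $p$ is odd; in the general case they are kept comparable to $\alpha,\alpha^2$ through the iteration by the usual regular-Bohr-set bookkeeping. The upshot is that we apply Theorem~\ref{thm:graph_counting} with $m\vcentcolon=\binom{k}{2}\asymp k^2$ and a density floor of order $\alpha^2$, so that $\cL(\cdot)$ of the floor is $O(\cL(\alpha))$ --- this is the whole point of the Hamilton-cycle routing, since cruder alternatives (e.g.\ simply restricting the $x_i$ to $A$) would introduce a floor of size $\exp(-\Omega(\cL(\alpha)^{16}))$ and spoil the $\cL(\alpha)$-exponent. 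The case $k=2$ is degenerate here (the lone edge must carry both diagonal constraints) and is exactly the Kelley--Meka theorem for finite abelian groups of odd order, which I take as the base case; from now on $k\ge3$.

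For the inductive step, work in the finite-field model and suppose $A$ has density $\alpha$ in a subspace $V_0\le\mathbb{F}_p^n$ (initially $V_0=\mathbb{F}_p^n$). Apply Theorem~\ref{thm:graph_counting} inside $V_0$ to the $A_{i,j}$ (each $X_i$ a copy of $V_0$), with $m=\binom{k}{2}$ and $\varepsilon$ an absolute constant, so that $\delta\asymp m^{-2}\asymp k^{-4}$. Either the clique count is within a factor $1-\varepsilon$ of $\prod_{i,j}\alpha_{i,j}=\alpha^{\binom{k+1}{2}}$, in which case the $k$-configuration count inside $V_0$ is already at least $\exp(-O(k^2\cL(\alpha)))$ and the argument halts; or we obtain an edge $(i,j)$ together with either (i) sets $S,T\subseteq V_0$ of density at least $\sigma\vcentcolon=\exp(-O(k^6\cL(\alpha)^2))$ with $\E_{(x,y)\in S\times T}1_{A_{i,j}}(x,y)\ge(1+\delta)\alpha_{i,j}$, or (ii) a set $S\subseteq V_0$ of density at least $\exp(-O(k^2\cL(\alpha)\log k))$ on which the column density of $A_{i,j}$ drops below $(1-\delta)\alpha_{i,j}$. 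The task is to convert this bivariate structure into a genuine density increment for $A$. For an undecorated chord, (i) reads $\langle 1_B,\mu_S*\mu_T\rangle\ge(1+\delta)\alpha$, and the Kelley--Meka sifting and almost-periodicity arguments collected in Appendix~\ref{app:aux_kelley_meka} turn the spread measure $\mu_S*\mu_T$ into a subspace $V_1\le V_0$ of codimension $O(\cL(\sigma)^{O(1)})$ on which $B$, hence $A$, has density at least $(1+\Omega(\delta))\alpha$; alternative (ii) is handled by the dual density-decrement argument. For a \emph{decorated} edge the relevant quantity is instead $\E_{x\in S,\,y\in T}1_A(y)1_B(x+y)$, and here one cannot invoke Theorem~\ref{thm:graph_counting} as a black box: one must argue that either $A$ is already denser than $(1+\Omega(\delta))\alpha$ on a subspace refinement of $T$, or else that, after restricting $y$ to $A\cap T$, the convolution estimate survives with the leading $\alpha^2$ upgraded to $\alpha$ and the usual conversion goes through. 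This is exactly the point at which further arguments in the style of \cite{kelley-meka} and \cite{bloom-sisask} have to be woven in, and it is what makes the single-variable forms a genuine (rather than merely formal) difficulty.

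Each successful increment multiplies the density of $A$ by $1+\Omega(\delta)=1+\Omega(k^{-4})$, so it can occur at most $T=O(k^4\cL(\alpha))$ times; hence the ``count is large'' alternative is eventually reached, inside a subspace (resp.\ regular Bohr set) $V_*$ of codimension (resp.\ rank) $d_*=O\!\bigl(T\cdot\mathrm{poly}(\cL(\sigma),\delta^{-1})\bigr)$, the radius of $V_*$ in the general case having shrunk by a factor controlled by $T$. It remains to push the resulting lower bound on the $k$-configuration count inside $V_*$ down to all of $G$: restricting each $x_i$ to a suitable coset (resp.\ translate) of $V_*$ and, in the Bohr-set case, using regularity to control the pairwise means --- which lie in a bounded dilate of that translate --- costs a factor which is a polynomially-bounded (in $k$) power of $\mu(V_*)$. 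Assembling the exponents --- $T\asymp k^4\cL(\alpha)$, $\cL(\sigma)\asymp k^6\cL(\alpha)^2$, the fixed polynomial losses in the subspace/Bohr-set conversions, and the final power of $k$ --- yields a bound of the shape $\exp(-O(k^{68}\cL(\alpha)^{16}))$, which is \eqref{eq:k_config_count}. Finally, \eqref{eq:density_bound} is immediate: if $A$ contains no non-degenerate $k$-configuration, then every tuple $(x_1,\ldots,x_k)$ contributing to \eqref{eq:k_config_count} must have $x_i=x_j$ for some $i\ne j$, so the left-hand side is at most $\binom{k}{2}/|G|$, forcing $|G|\le\exp(O(k^{68}\cL(\alpha)^{16}))$. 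The main obstacle throughout is the decorated-edge case of the conversion step --- pushing the Kelley--Meka convolution machinery through when one endpoint of the exploited edge is pinned to $A$, without losing a power of $\alpha$ --- together with the pervasive bookkeeping needed to keep every estimate polynomial in $k$, which is precisely why Theorem~\ref{thm:graph_counting} had to be proved with explicit polynomial dependence on $m$.
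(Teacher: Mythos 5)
Your overall strategy — density increment driven by Theorem~\ref{thm:graph_counting}, with conversion to Bohr-set increments via the Kelley--Meka machinery of Appendix~\ref{app:aux_kelley_meka}, and iteration until the expected count is reached — matches the paper. However, the way you absorb the $k$ single-variable constraints $1_A(x_i)$ into the bivariate framework is genuinely different and, as framed, has a gap. You route each constraint ``$x_i\in A$'' onto a distinguished Hamilton-cycle edge, producing ``decorated'' edge-sets $A_{i,j}\subseteq G\times G$ of density $\alpha^2$ and ``undecorated'' chords of density $\alpha$. The paper instead absorbs these constraints by taking the \emph{vertex} sets $X_j$ to be copies of $A$ restricted to nested regular Bohr sets $A_j\vcentcolon=A'\cap B^{(j)}$, so that \emph{every} edge encodes a plain 3AP condition $x_i+x_j\in 2\cdot\widetilde A_j$ with density $\alpha_{i,j}\approx\alpha$; there are no decorated edges at all.

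The difficulty you yourself identify for decorated edges is not closed by the sketch. After Theorem~\ref{thm:graph_counting}(i) gives you $S,T$ with $\E_{(x,y)\in S\times T}1_A(y)1_B(x+y)\ge(1+\delta)\alpha^2$, you split on $\mu_T(A)$. When $\mu_T(A)<(1+\delta/2)\alpha$ you can pass to $T'=T\cap A$ and obtain $\langle 1_B,\mu_S*\mu_{T'}\rangle\ge(1+\delta/4)\alpha$, which the sifting-plus-almost-periodicity machinery can indeed digest. But when $\mu_T(A)\ge(1+\delta/2)\alpha$ you are left with a density increment of $A$ on an \emph{unstructured} set $T$ of small density $\sigma\sim\exp(-O(k^6\cL(\alpha)^2))$; a priori there is no subspace or Bohr set sitting behind $T$, and nothing in Theorem~\ref{thm:graph_counting} or Appendix~\ref{app:aux_kelley_meka} converts ``$A$ is denser on some small set'' into a structured increment. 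This is precisely the obstruction the paper avoids by building the $A$-restriction directly into the vertex sets. The price the paper pays is elsewhere: once $X_j=A_j$, the edge densities $\alpha_{i,j}=\langle\mu_{A_i}*\mu_{A_j},1_{2\cdot\widetilde A_j}\rangle$ are no longer automatically close to $\alpha$ (indeed they vanish if $A$ is 3AP-free), and a separate pre-processing step — essentially the Kelley--Meka Roth argument on the discrepancy $|\alpha_{i,j}-\alpha|$, via \eqref{eq:starting_assumption} and Corollary~\ref{cor:few_solutions_to_density_increment} — handles that case before the graph counting lemma is invoked. In your setup the densities are trivially exact, but you inherit the decorated-edge problem in exchange. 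You would need either to prove that the $T$ produced by Theorem~\ref{thm:graph_counting}(i) carries enough structure (it comes from a grid-norm box, but that by itself does not give a Bohr approximation), or to redesign the decomposition so the $1_A(x_i)$ factors never land on an edge — which is what the paper does.

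Two further points of bookkeeping. First, the step-count is $t\ll k^5\cL(\alpha)$ rather than $k^4\cL(\alpha)$: the paper's density increment is $(1+\Omega(k^{-5}))\alpha$, not $(1+\Omega(\delta))\alpha=(1+\Omega(k^{-4}))\alpha$, because the pre-regularisation over the family $\mathcal{B}$ of $O(k)$ Bohr sets in \cite[Lemma~3.4]{filmus-hatami-hosseini-kelman} only yields an increment of order $\gamma/k$. Second, and more importantly, your account of the Bohr-set iteration omits the observation (due to Pilatte) that the frequency set $\Gamma_s=\Gamma_{s-1}\cup\{\gamma\psi_2^{\sigma_s}:\gamma\in\Gamma_{s-1}\}\cup\Delta_s$ grows only \emph{quadratically} in $s$ rather than doubling at each step; a naive union bound would give an exponentially large rank and wreck the $k$-polynomiality that your Theorem~\ref{thm:graph_counting} was carefully engineered to preserve. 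This observation is essential to arriving at $d_t\ll k^{56}\cL(\alpha)^{13}$ and hence the stated exponents.
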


We begin by giving a quick deduction of Theorem \ref{thm:main_result} from Theorem \ref{thm:general_groups}. This is entirely standard: we embed the interval $[N]$ into a suitably large cyclic group of odd order and apply the bound \eqref{eq:density_bound}.

\begin{proof}[Proof of Theorem \ref{thm:main_result} assuming Theorem \ref{thm:general_groups}.]
    Assume to the contrary that $A$ doesn't contain any non-degenerate $k$-configuration. Set $N' \vcentcolon= 2N+1$ and let $\pi \colon \mathbb{Z} \to \mathbb{Z}/N'\mathbb{Z}$ be the natural projection. Then $A' \vcentcolon= \pi(A)$ contains no non-degenerate $k$-configuration. Indeed, if this were not the case, there would exist distinct $x_1, \ldots, x_k \in A$ such that for all $1 \leq i < j \leq k$ we have $(\pi(x_i)+\pi(x_j))/2 \in A'$. But this means that for any such $i,j$ there is some $z \in A$ such that $x_i+x_j \equiv 2z \pmod {N'}$. Since $x_i,x_j,z\in [N]$, we must have $x_i+x_j = 2z$, i.e.\ $z = (x_i+x_j)/2$. But then $A$ contains a non-degenerate $k$-configuration, which contradicts our starting assumption. Therefore, by Theorem \ref{thm:general_groups}, we have
    \[N' \leq \exp(O(k^{68}\cL(|A'|/N')^{16})).\]
    Since $N \leq N'$ and $|A'|/N' \geq |A|/(3N) = \alpha/3$, we obtain that
    \[N \leq \exp(O(k^{68}\cL(\alpha)^{16})).\]
    Taking $C$ to be sufficiently large now gives the desired contradiction.
\end{proof}

Thus, our main task for this section is to prove Theorem \ref{thm:general_groups}. Since the details are rather technical, we first give a sketch of a simplified version of the argument in the hope of illuminating the main ideas. Along the way, we also include a comparison with the work of Filmus et al.\ \cite{filmus-hatami-hosseini-kelman}, highlighting the extra difficulties that arise in our case. For the purposes of this discussion, we work in the setting of vector spaces over finite fields. This allows one to express the main ideas much more cleanly, without having to go through all the technicalities of the general case.

\bigskip

\noindent\textbf{Outline of the argument.} Suppose that $G = \mathbb{F}_q^n$, where $q$ is an odd prime. As is common with problems of this kind, we employ a density increment approach. The basic idea is that if the number of $k$-configurations in $A$ deviates from what one would expect for a random set of the same density, then some translate of $A$ has increased density on a large subspace of $G$. One then iterates this argument; since the density of $A$ is bounded above by $1$, this process terminates in a bounded number of steps, thereby giving the conclusion. The starting point, therefore, is to apply Theorem \ref{thm:graph_counting} (though for the level of the discussion here the symmetric variant \cite[Theorem 1.4]{filmus-hatami-hosseini-kelman} would also suffice). We take $H$ to be the transitive orientation of the complete graph $K_k$, the sets $X_1, \ldots, X_k$ to be copies of $A$ and each set $A_{i,j}$ to consist of the pairs $(x,y) \in A \times A$ such that $x+y \in 2\cdot A$. 

There is a serious obstacle, however, to the application of the graph counting lemma, which is that we a priori have no control over the density of the sets $A_{i,j}$. Indeed, in the extreme case when $A$ is free of three-term arithmetic progressions, each $A_{i,j}$ is empty! This should be compared with the situation in \cite{filmus-hatami-hosseini-kelman}, where such difficulties are absent. More concretely, in the case of the binary system given by $L_{i,j}(x_1,\ldots,x_k) = x_i+x_j$ for $1 \leq i < j \leq k$, one instead takes each of $X_1,\ldots,X_k$ to be the whole of $G$ and $A_{i,j}$ to consist of the pairs $(x,y) \in G\times G$ such that $x+y \in A$. Observe that, in this case, the bipartite graph represented by $A_{i,j}$ has density $\alpha$. Moreover, it is regular -- every vertex has degree exactly equal to $|A|$. Even though our situation is more complicated, there is still a way out. Indeed, the case when there is a significant discrepancy in the density of $A_{i,j}$ from $\alpha$ is precisely the starting assumption in the Kelley--Meka proof of Roth's theorem. Therefore, this case can be shown to lead to a density increment of the desired kind.

In the complementary case, the graph counting lemma provides us with suitably dense sets $S, T \subseteq A$ such that either $A_{i,j}$ has significantly increased density on $S \times T$ or the degree in $A_{i,j}$ of each vertex from $S$ is significantly lower than average. The process of deducing a suitable density increment from the former is precisely the main concern of \cite[\S3]{filmus-hatami-hosseini-kelman}. Briefly, this can be achieved as follows. One rewrites the conclusion as
\[\langle \mu_S*\mu_T, 1_{2\cdot A}\rangle \geq (1+\delta)\alpha\]
and applies the adjoint property of convolutions to transform this into
\[\langle \mu_{2\cdot A}\circ\mu_T, \mu_S\rangle \geq 1+\delta.\]
From there, an application of Hölder's inequality implies an unusually large $\mathbb{L}^p$-norm of the difference convolution $\mu_{2\cdot A}\circ\mu_T$ for a not too large value of the parameter $p$. An asymmetric version of the dependent random choice argument of Kelley and Meka followed by almost-periodicity then yields a density increment of $2\cdot A$ on a subspace of suitably small codimension. Rescaling, we obtain an adequate density increment of $A$. We remark that, out of the main components of the Kelley--Meka proof of Roth's theorem, we did not have to use spectral non-negativity since we already had an upwards deviation from an expected count. This is because spectral non-negativity (more precisely an analogue thereof, namely \cite[Lemma 2.9]{filmus-hatami-hosseini-kelman}) has already been used in the argument leading to this alternative of the graph counting lemma.

In the remaining case, $1_{2\cdot A}\circ\mu_A$ is bounded above by $(1-\delta)\alpha$ everywhere on $S$, whence averaging yields
\[\langle \mu_{2\cdot A} \circ \mu_A, \mu_S\rangle \leq 1-\delta.\]
We note in passing that, in the context of \cite{filmus-hatami-hosseini-kelman}, this cannot happen since the graph $A_{i,j}$ is regular. In our case, however, this means that
\[|\langle (\mu_{2\cdot A}-1)\circ (\mu_A-1), \mu_S\rangle| \geq \delta,\]
whence an application of Hölder's inequality followed by the decoupling inequality of Kelley and Meka \cite[Proposition 2.12]{kelley-meka} implies that at least one of
\[\lVert (\mu_{2\cdot A}-1)\circ (\mu_{2\cdot A}-1)\rVert_p,\quad \lVert (\mu_A-1)\circ (\mu_A-1)\rVert_p\]
must be large for a not too large value of the parameter $p$. From this, one can use spectral non-negativity together with the other Kelley--Meka techniques (sifting, almost-periodicity) to obtain a density increment of either $2\cdot A$ or $A$. By dilating if necessary, these two cases can be unified into a density increment of $A$.

\bigskip

We now begin the process of translating the above ideas into the framework of Bohr sets. In general finite abelian groups, Bohr sets serve as a substitute for subspaces. This complicates the arguments on a technical level, but the core ideas remain essentially the same. The reader unfamiliar with this topic may wish to consult Appendix \ref{app:bohr_sets} before reading the rest of this section.

The proof of Theorem \ref{thm:general_groups} is based on the following density increment result, which plays a role analogous to that of \cite[Theorem 3.3]{filmus-hatami-hosseini-kelman}.

\begin{proposition}
\label{prop:density_increment}
Let $G$ be a finite abelian group of odd order and let $k \geq 2$ be an integer. Let $B = \mathrm{Bohr}(\Gamma;\rho)$ be a regular Bohr set of rank $d$ in $G$ and let $A \subseteq B$ be a subset of density $\alpha > 0$. Then
\begin{enumerate}[(i)]
    \item either the proportion of $k$-tuples $(x_1,\ldots,x_k)\in G^k$ such that $\frac{x_i+x_j}{2} \in A$ for all $1 \leq i \leq j \leq k$ is at least $\exp(-O((k^2\log k)d\cL(\alpha/d)))\rho^{2kd}$;
    \item or there exist $\Delta \subseteq \widehat{G}$ of size at most $O(k^{46}\cL(\alpha)^{11})$,  $\rho' \geq \rho\exp(-O(k\cL(\alpha/d)+k^6\cL(\alpha)^2))$ and $x \in G$ such that $B' = \mathrm{Bohr}(\Gamma';\rho')$ is a regular Bohr set and
    \[\mu_{B'}(A-x) \geq (1+\Omega(k^{-5}))\alpha,\]
    where $\Gamma' = \Gamma \cup \{\gamma \psi_2^{\sigma} \mid \gamma \in \Gamma\} \cup\Delta$ for some $\sigma \in \{\pm1\}$.
\end{enumerate}
\end{proposition}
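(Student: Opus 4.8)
The plan is a density-increment argument driven by the graph counting lemma (Theorem~\ref{thm:graph_counting}), transcribing the finite-field outline above into the Bohr-set language. The first step is to localise: replace $B$ by a regular sub-Bohr set $\widetilde B=\mathrm{Bohr}(\widetilde\Gamma;\widetilde\rho)$ whose frequency set $\widetilde\Gamma$ is $\Gamma$ together with a single dilate $\{\gamma\psi_2^{\sigma_0}:\gamma\in\Gamma\}$ (so of rank at most $2d$), together with a further nested regular Bohr set $\widetilde B_1$, the radii being chosen a factor $\rho\exp(-O(k\log k\,\cL(\alpha/d)))$ or so smaller than $\rho$ so that $\widetilde B$ and $\widetilde B_1$ are stable under the halving map $x\mapsto x/2$ in the appropriate approximate sense and some translate $+\,t$ makes $A$ (hence $2\cdot A$) simultaneously dense, of relative density at least $(1-o(k^{-5}))\alpha$, in all the relevant sub-Bohr sets. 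Absorbing $t$, we may assume $A\subseteq\widetilde B$. We then run the argument with $H$ the transitive orientation of $K_k$ (so $m=\binom k2$), the sets $X_i$ all equal to $A\cap\widetilde B_1$, and $A_{i,j}=\{(x,y)\in X_i\times X_j:x+y\in2\cdot A\}$. All the $A_{i,j}$ share a single density $\alpha_0$, and the proportion of $k$-tuples in $G^k$ forming a configuration is bounded below by $\prod_i\mu(X_i)\cdot\E_{x\in X_1\times\ldots\times X_k}\prod_{(i,j)\in E}1_{A_{i,j}}(x_i,x_j)$.

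Next I would dispose of the case where $\alpha_0$ is far from $\alpha$. Using the identity $\langle(\mu_{2\cdot A}-1)\circ(\mu_A-1),\mu_A\rangle=\alpha_0/\alpha-1$, if $|\alpha_0-\alpha|\ge c\,k^{-5}\alpha$ for a suitable absolute constant $c$, then we are exactly in the hypothesis of the Kelley--Meka argument, and feeding this into the Bohr-set versions of sifting, spectral positivity and almost-periodicity collected in Appendix~\ref{app:aux_kelley_meka} (following \cite{kelley-meka,bloom-sisask}) produces a density increment of $2\cdot A$ or of $A$ on a regular Bohr set with frequency set $\widetilde\Gamma\cup\Delta$; dilating by $\psi_2^{-1}$ converts an increment of $2\cdot A$ into one of $A$, putting us in alternative~(ii), with $\sigma$ being whichever of $\pm1$ results. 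This step simultaneously handles the degenerate situation where $A$ is nearly free of three-term progressions. In the complementary case $\alpha_0=(1\pm o(k^{-5}))\alpha$, so $\alpha_{i,j}\ge\alpha_0\ge\alpha/2$, and I apply Theorem~\ref{thm:graph_counting} with $\varepsilon$ an absolute constant. If the configuration count is within $\varepsilon$ of $\prod\alpha_{i,j}=\alpha_0^{\binom k2}$, then it is $\gg\alpha^{O(k^2)}$; multiplying by $\prod_i\mu(X_i)\ge(\mu(\widetilde B_1)\alpha)^k$ with $\mu(\widetilde B_1)\ge\rho^{2d}\exp(-O(dk\log k\,\cL(\alpha/d)))$ gives precisely the lower bound in alternative~(i).

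Otherwise Theorem~\ref{thm:graph_counting} returns an edge $(i,j)$ and one of two structures, with $\delta\asymp k^{-4}$. In case~(i) there are dense $S\subseteq X_i$, $T\subseteq X_j$ with $\langle\mu_S*\mu_T,1_{2\cdot A}\rangle\ge(1+\delta)\alpha_0\ge(1+\delta/2)\alpha$; the adjoint identity turns this into $\langle\mu_{2\cdot A}\circ\mu_T,\mu_S\rangle\ge1+\delta$, Hölder's inequality produces an abnormally large $\lVert\mu_{2\cdot A}\circ\mu_T\rVert_p$ for bounded $p$, and the asymmetric dependent-random-choice step of Kelley--Meka followed by almost-periodicity upgrades this to an increment of $2\cdot A$ on a Bohr set of the prescribed shape, which rescaling turns into an increment of $A$. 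In case~(ii) the bound $1_{2\cdot A}\circ\mu_A\le(1-\delta)\alpha$ on the dense set $S$ gives $|\langle(\mu_{2\cdot A}-1)\circ(\mu_A-1),\mu_S\rangle|\ge\delta$ after averaging; Hölder together with the Kelley--Meka decoupling inequality \cite[Proposition 2.12]{kelley-meka} forces one of $\lVert(\mu_{2\cdot A}-1)\circ(\mu_{2\cdot A}-1)\rVert_p$, $\lVert(\mu_A-1)\circ(\mu_A-1)\rVert_p$ to be large, whence spectral positivity, sifting and almost-periodicity give an increment of $2\cdot A$ or of $A$, again unified by dilation. In every branch the densities $\exp(-O(k^6\cL(\alpha)^2))$ of $S,T$ and $\exp(-O(k^2\log k\,\cL(\alpha)))$ of the low-degree set furnished by Theorem~\ref{thm:graph_counting} propagate through the Appendix~\ref{app:aux_kelley_meka} machinery to yield $|\Delta|=O(k^{46}\cL(\alpha)^{11})$ and $\rho'\ge\rho\exp(-O(k\cL(\alpha/d)+k^6\cL(\alpha)^2))$.

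The conceptual skeleton is thus a faithful copy of the finite-field outline; the real work---and the main obstacle---is the Bohr-set bookkeeping. The delicate points are: arranging the nested Bohr sets $\widetilde B_1\subseteq\widetilde B$ and the translate $t$ so that, unlike in \cite{filmus-hatami-hosseini-kelman} where the relevant bipartite graph is automatically regular, the means $\tfrac{x_i+x_j}{2}$ land in a region where $A$ is dense rather than escaping $2\cdot\widetilde B$ (so that $\alpha_0$ is a meaningful quantity and is comparable to $\mathbb{P}_{x,y\in A}(x+y\in2\cdot A)$); ensuring that every increment, whether of $A$ or of $2\cdot A$, is delivered on a Bohr set of exactly the prescribed form $\mathrm{Bohr}(\Gamma\cup\{\gamma\psi_2^\sigma:\gamma\in\Gamma\}\cup\Delta;\rho')$ for a single sign $\sigma$, which is what the dilation step and the choice of $\widetilde\Gamma$ are for; and carrying the polynomial-in-$k$, polylog-in-$\alpha$ parameters through the long chain of Kelley--Meka-type lemmas without incurring an exponential dependence on $k$---which is precisely why the quantitatively sharpened Theorem~\ref{thm:graph_counting} was needed in place of \cite[Theorem 2.1]{filmus-hatami-hosseini-kelman}.
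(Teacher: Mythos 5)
Your outline captures the right strategic skeleton (localize to a regular Bohr set, split on whether the bipartite densities are close to $\alpha$, dispose of the discrepant case by the Kelley--Meka machinery, and otherwise feed the three alternatives of Theorem~\ref{thm:graph_counting} into \Cref{cor:few_solutions_to_density_increment} and \Cref{prop:lp_norm_to_density_increment}), but the setup in your second step has a genuine gap that the paper's proof is specifically engineered to avoid.

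You propose to take all the sets $X_i$ equal to $A\cap\widetilde B_1$ for a single sub-Bohr set $\widetilde B_1$, and to define $A_{i,j}=\{(x,y)\in X_i\times X_j: x+y\in2\cdot A\}$; you then assert all $A_{i,j}$ have a common density $\alpha_0$ approximately comparable to $\mathbb{P}_{x,y\in A}(x+y\in2\cdot A)$. In the finite-field model this is fine because a subspace is closed under addition. But for a Bohr set $\widetilde B_1$ of rank $\asymp d$, the sumset $\widetilde B_1+\widetilde B_1$ has measure exponentially larger than $\widetilde B_1$; the sums $x+y$ escape the region where $A$ is dense, there is no way to compare $\alpha_0$ to $\alpha$ (both the numerator and the normalization misbehave), and the normalizing factor $\prod_i\mu(X_i)$ carries none of the $\rho^{2kd}$ gain that alternative~(i) requires. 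Your claim that all $A_{i,j}$ have the same density is an artefact of this flawed single-scale setup; it is not the mechanism the paper relies on.

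What the paper actually does is build a chain of $k$ regular Bohr sets $B^{(1)}\subseteq B^{(2)}\subseteq\ldots\subseteq B^{(k)}$ (each a $\asymp\lambda$-dilate of the next, with $\lambda\asymp k^{-5}\alpha/d$), take $X_j=A_j=A'\cap B^{(j)}$ and define $A_{i,j}=\{(x,y)\in A_i\times A_j: x+y\in2\cdot\widetilde A_j\}$ with $\widetilde A_j = A'\cap(\tfrac12\cdot B^{(j)})$. For $i<j$ the regularity of $B^{(j)}$ at scale $\lambda_i/\lambda_j$ keeps $x+y$ within a small blow-up of $B^{(j)}$, so the relative density $\alpha_{i,j}=\langle\mu_{A_i}*\mu_{A_j},1_{2\cdot\widetilde A_j}\rangle$ is meaningfully comparable to $\alpha$. (In fact the $\alpha_{i,j}$ are not all equal even then --- they depend on $j$ through $\widetilde A_j$ --- and the paper only arranges, via \cite[Lemma 3.4]{filmus-hatami-hosseini-kelman} and a case split, that each $\alpha_{i,j}$ lies within $O(k^{-4})\alpha$ of $\alpha$.) The $k$ scales are also the source of the factor $\rho^{2kd}\exp(-O((k^2\log k)d\cL(\alpha/d)))$ in alternative~(i) via $\prod_j\mu(B^{(j)})\geq\prod_j((\lambda/2)^{k-j+1}\rho/8)^{2d}$, and of the term $k\cL(\alpha/d)$ in the width loss in alternative~(ii). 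Your proposed loss $\rho\exp(-O(k\log k\,\cL(\alpha/d)))$ for a \emph{single} narrowed Bohr set is not enough. This multi-level nesting is precisely the ``Bohr-set bookkeeping'' you flag as the delicate point but do not actually perform; without it, the argument does not close.

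A second, minor point: the identity $\langle(\mu_{2\cdot A}-1)\circ(\mu_A-1),\mu_A\rangle=\alpha_0/\alpha-1$ only holds when $B=G$; in the Bohr-set setting the correct normalization involves $\mu(B^{(j)})^{-1}$, as in the paper's inequality~\eqref{eq:starting_assumption}, and the averaging argument (\Cref{lm:averaging_argument}, \Cref{lm:domination_by_convolution}) has to be used to pass to a convolution of two regular Bohr sets before invoking \Cref{prop:lp_norm_to_density_increment}.
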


Theorem \ref{thm:general_groups} follows from Proposition \ref{prop:density_increment} by iteration. For the most part, this follows standard lines. Nevertheless, we have to make an additional observation about the behaviour of the rank of the ambient Bohr set. Indeed, per (ii) of Proposition \ref{prop:density_increment}, it may seem at first glance that at each step of the iteration, the frequency set doubles in size. This would cause the rank to grow exponentially, which would result in poor bounds in Theorem \ref{thm:general_groups}. Luckily, by inspecting the specific structure of the frequency set, one readily sees that the rank in fact grows quadratically. This kind of observation seems to have first appeared in the work of Pilatte \cite{pilatte} (see \cite[Remark 3.4]{pilatte} for a more detailed discussion).

\begin{proof}[Proof of Theorem \ref{thm:general_groups} assuming Proposition \ref{prop:density_increment}.]
    Consider pairs of sequences $(A_s)_{0\leq s\leq t}$, $(B_s)_{0\leq s\leq t}$ such that the following hold for $0 \leq s \leq t$:
    \begin{enumerate}[(i)]
        \item $A_0 = A$, $B_0 = G$;
        \item $B_s = \mathrm{Bohr}(\Gamma_s;\rho_s)$ is a regular Bohr set of rank $d_s$, where $\Gamma_s \subseteq \widehat{G}$, $\rho_s \in [0,2]$;
        \item $A_s \subseteq B_s$ is a subset of density $\alpha_s$;
        \item if $s \geq 1$, then $A_s = (A_{s-1}-x_s) \cap B_s$ for some $x_s \in G$;
        \item if $s \geq 1$, then $\alpha_s \geq (1+ck^{-5})\alpha_{s-1}$, where $c > 0$ is an absolute constant;
        \item if $s \geq 1$, then $\Gamma_s = \Gamma_{s-1} \cup \{\gamma \psi_2^{\sigma_s} \mid \gamma \in \Gamma_{s-1}\} \cup \Delta_s$ for some $\sigma_s \in \{\pm1\}$ and $\Delta_s \subseteq \widehat{G}$ of size at most $O(k^{46}\cL(\alpha_{s-1})^{11})$;
        \item if $s \geq 1$, then $\rho_s \geq \rho_{s-1}\exp(-O(k\cL(\alpha_{s-1}/d_{s-1})+k^6\cL(\alpha_{s-1})^2))$.
    \end{enumerate}
    In particular, the sequences $(\alpha_s)_{0\leq s\leq t}$ and $(d_s)_{0\leq s\leq t}$ are increasing. Moreover, property (v) implies via induction that $\alpha_s \geq (1+ck^{-5})^s\alpha$ for all $0 \leq s \leq t$. Since $\alpha_t \leq 1$, it follows that $t \ll k^5\cL(\alpha)$. In particular, we may choose such a pair of sequences whose length $t+1$ is maximal. We now estimate the rank and width of $B_t$. First, using (vi), it is straightforward to show by induction that for all $0 \leq s \leq t$ we have the inclusion
    \[\Gamma_s \subseteq \bigcup_{r=0}^{s}\{\gamma \psi_2^a \mid \gamma \in \Delta_r,\ a \in \mathbb{Z},\ |a| \leq s-r\},\]
    where we specially define $\Delta_0 \vcentcolon= \{1\}$. Hence, by employing the union bound, we get
    \[d_t = |\Gamma_t| \leq \sum_{r=0}^{t}(2(t-r)+1)|\Delta_r| \ll t^2k^{46}\cL(\alpha)^{11} \ll k^{56}\cL(\alpha)^{13}.\]
    Furthermore, (vii) implies that
    \begin{align*}
        \rho_t &\geq \prod_{s=1}^{t}\exp(-O(k\cL(\alpha_{s-1}/d_{s-1})+k^6\cL(\alpha_{s-1})^2))\\
        &\geq \exp(-O(k\cL(\alpha/d_t)+k^6\cL(\alpha)^2))^t\\
        &\geq \exp(-O(k^{11}\cL(\alpha)^3)).
    \end{align*}
    By maximality of $t$, upon applying Proposition \ref{prop:density_increment} to $A_t \subseteq B_t$, we must end up in case (i). But this means that the proportion of $k$-tuples $(x_1,\ldots,x_k) \in G^k$ generating a $k$-configuration in $A_t$ is at least
    \begin{align*}
        \exp(-O((k^2\log k)d_t\cL(\alpha_t/d_t)))\rho_t^{2kd_t} &\geq \exp(-O(kd_t \cdot k^{11}\cL(\alpha)^3))\\
        &= \exp(-O(k\cdot k^{56}\cL(\alpha)^{13}\cdot k^{11}\cL(\alpha)^3))\\
        &= \exp(-O(k^{68}\cL(\alpha)^{16})).
    \end{align*}
    To conclude, note that by inductively applying (iv), it follows that $A_t$ is a subset of a translate of $A$. Since $k$-configurations are translation-invariant, we thus obtain the claimed bound \eqref{eq:k_config_count}. If $A$ contains no non-degenerate $k$-configuration, then the left-hand side in \eqref{eq:k_config_count} can be crudely upper bounded by
    \[\mathbb{P}_{x_1,\ldots,x_k\in G}(x_i = x_j \text{ for some distinct } i,j \in [k]) \leq \binom{k}{2}/|G|,\]
    from which \eqref{eq:density_bound} follows by rearranging.
\end{proof}

The goal for the rest of this section is to establish Proposition \ref{prop:density_increment}. To accomplish this, we will require some preparation. Specifically, we will have to adapt several intermediate results of \cite{bloom-sisask} and \cite{kelley-meka} to our setting. The first result says that, in a suitable local setting, self-regularity implies mixing for three-variable linear equations. Here, we informally say that a set $A$ is \emph{self-regular} if the $\mathbb{L}^p$-norm of the difference convolution of $A$ with itself is roughly equal to the `expected' value (for a precise definition, see \cite[Definition 2.3]{kelley-meka}). Hence, our result can be viewed as a local variant of \cite[Theorem 2.11]{kelley-meka}. The proof consists of carrying out the Hölder lifting and spectral non-negativity/unbalancing steps of the Kelley--Meka proof as exposited by Bloom and Sisask \cite{bloom-sisask}. It also requires an intermediate application of the local decoupling inequality of Kelley and Meka \cite[Lemma 5.8]{kelley-meka}.

\begin{proposition}
\label{prop:few_solutions_to_lp_norm}
There is an absolute constant $c > 0$ such that the following holds. Let $B, B' \subseteq G$ be regular Bohr sets of rank $d$. Let $A_1,A_2 \subseteq B$ be subsets of densities at least $\alpha > 0$ and let $C \subseteq B'$ be a subset of density $\gamma > 0$. Suppose that $B' \subseteq B_{c\varepsilon\alpha/d}$ for some $\varepsilon \in (0,1]$ and let $B'', B''' \subseteq B_{c/d}'$ be non-empty subsets. If
\[|\langle \mu_{A_1}\circ\mu_{A_2}, \mu_C\rangle - \mu(B)^{-1}| \geq \varepsilon\mu(B)^{-1},\]
then there exist $j \in \{1,2\}$, a positive integer $p \ll \varepsilon^{-1}\cL(\gamma)$ and $x \in G$ such that
\[\lVert \mu_{A_j}\circ\mu_{A_j}\rVert_{\mathbb{L}^p(\mu_{B''}*\mu_{B'''+x})} \geq \Bigl(1+\frac{\varepsilon}{16}\Bigr)\mu(B)^{-1}.\]
\end{proposition}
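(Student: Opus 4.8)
The plan is to follow the Kelley--Meka argument as exposited by Bloom and Sisask, but localised to Bohr sets in the manner of \cite[\S5]{kelley-meka}. First I would reduce to an upwards deviation. Writing $f_j = \mu_{A_j} - \mu_B$ (the balanced function, normalised so that $\mu_B$ has the right mean on $B$), the hypothesis $|\langle \mu_{A_1}\circ\mu_{A_2}, \mu_C\rangle - \mu(B)^{-1}| \geq \varepsilon\mu(B)^{-1}$ should be massaged, using the near-equality $\mu_{A_i} \approx \mu_B$ on average and regularity of $B$ to control the cross terms $\langle \mu_B \circ (\mu_{A_j}-\mu_B), \mu_C\rangle$ and $\langle \mu_B \circ \mu_B, \mu_C\rangle$, into a lower bound $|\langle f_1 \circ f_2, \mu_C\rangle| \gg \varepsilon\mu(B)^{-1}\cdot\mu(B)$ — i.e.\ a genuine deviation of the difference convolution of the two balanced functions tested against $\mu_C$. (This is the step where $B' \subseteq B_{c\varepsilon\alpha/d}$ is used, to ensure $\mu_{A_j}$ is approximately invariant under translation by elements of $B'$, so that the error terms are $o(\varepsilon)$ relative to the main term.)

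Next I would apply Hölder's inequality to pass from the inner product against $\mu_C$ to an $\mathbb{L}^{p_0}$-norm bound: since $\mu_C \leq \gamma^{-1}$, we get $\lVert f_1 \circ f_2 \rVert_{\mathbb{L}^{p_0}(\mu_{B'})} \gg \varepsilon$ (suitably normalised) for $p_0 \ll \cL(\gamma)$, the point being that testing against a density-$\gamma$ set only costs a logarithmic power. Then comes the heart of the matter: the local decoupling inequality \cite[Lemma 5.8]{kelley-meka}, which allows one to replace the mixed difference convolution $f_1 \circ f_2$ by one of the two "pure" difference convolutions $f_j \circ f_j$ at the cost of increasing $p$ by a bounded factor and losing a constant in the deviation — this produces $j \in \{1,2\}$ and $p \ll \varepsilon^{-1}\cL(\gamma)$ with $\lVert f_j \circ f_j\rVert$ large in an appropriate local $\mathbb{L}^p$-norm. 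Finally I would run the spectral-non-negativity / unbalancing step: since $f_j \circ f_j = \mu_{A_j}\circ\mu_{A_j} - 2\mu_B\circ\mu_{A_j} + \mu_B\circ\mu_B$ and the Fourier coefficients of $\mu_{A_j}\circ\mu_{A_j}$ are non-negative, an unbalancing argument (Bloom--Sisask's version of Kelley--Meka) upgrades a large $\mathbb{L}^p$-norm of the balanced convolution to a genuine pointwise-type lower bound $\lVert \mu_{A_j}\circ\mu_{A_j}\rVert_{\mathbb{L}^{p'}(\nu)} \geq (1 + \varepsilon/16)\mu(B)^{-1}$ for a slightly larger $p'$ and a measure $\nu$ of the form $\mu_{B''}*\mu_{B'''+x}$; the translate $x$ and the convolution structure of $\nu$ appear because the local unbalancing/almost-periodicity machinery works with measures supported on sums of small Bohr sets, and $B'', B''' \subseteq B'_{c/d}$ guarantees these are small enough relative to $B'$ for the regularity estimates to go through. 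The constant $16$ is just the numerical output of the unbalancing lemma.

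The main obstacle I expect is not any single inequality but the bookkeeping of Bohr-set regularity: at every stage — passing from $\mu_{A_j}$ to $f_j$, applying decoupling, applying unbalancing — one incurs error terms of the form (width ratio)$\times$(rank) and must check these are dominated by the main term $\varepsilon$. The constraint $B' \subseteq B_{c\varepsilon\alpha/d}$ is precisely calibrated for the first reduction, and $B'', B''' \subseteq B'_{c/d}$ for the decoupling/unbalancing steps, but one has to be careful that the cumulative loss across all steps stays below $\varepsilon/16$ so the final constant survives; choosing the absolute constant $c$ small enough at the end absorbs all of these. A secondary technical point is that the local decoupling inequality of Kelley--Meka is stated for convolutions of measures on Bohr sets, so one must first arrange $f_1 \circ f_2$ to be tested in exactly the form their lemma expects (this is routine given the appendix material on Bohr sets, but needs to be said).

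Everything else — Hölder, the elementary manipulation $f_j\circ f_j = \mu_{A_j}\circ\mu_{A_j} - 2\mu_B\circ\mu_{A_j}+\mu_B\circ\mu_B$, the fact that $p \ll \varepsilon^{-1}\cL(\gamma)$ because decoupling costs a factor $\asymp\varepsilon^{-1}$ on top of the $\cL(\gamma)$ from Hölder — is standard and I would cite the relevant lemmas from \cite{bloom-sisask} and \cite{kelley-meka} rather than reprove them. The non-negativity of $\widehat{\mu_{A_j}\circ\mu_{A_j}} = |\widehat{\mu_{A_j}}|^2 \geq 0$ is what makes the unbalancing step one-sided, which is why we only need an upper/lower deviation and never have to separately handle a downward deviation by a different argument.
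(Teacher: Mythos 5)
Your proposal matches the paper's proof essentially step for step: Hölder lifting (Proposition~\ref{prop:holder_lifting}), Kelley--Meka's local decoupling inequality \cite[Lemma~5.8]{kelley-meka} to pass to a diagonal difference convolution, the spectral non-negativity/unbalancing step via \cite[Proposition~18]{bloom-sisask}, and a final averaging (Lemma~\ref{lm:averaging_argument}) to land on the measure $\mu_{B''}*\mu_{B'''+x}$. The only minor misattribution is that the $\varepsilon^{-1}$ factor in $p$ comes from the unbalancing step rather than from decoupling, and the paper's opening move is to replace $A_2$ by $-A_2$ so that $\circ$ becomes $*$ before applying the Hölder lifting proposition --- but neither affects the correctness of your outline.
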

\begin{proof}
    Since $B$ is symmetric and the conclusion is invariant under replacing $A_2$ by ${-A_2}$, by considering ${-A_2} \subseteq B$ in place of $A_2$, we may instead assume that
    \[|\langle \mu_{A_1}*\mu_{A_2}, \mu_C\rangle - \mu(B)^{-1}| \geq \varepsilon\mu(B)^{-1}.\]
    Hence, by Proposition \ref{prop:holder_lifting}, there exists an even positive integer $p \ll \cL(\gamma)$ such that
    \[\lVert (\mu_{A_1}-\mu_B)*(\mu_{A_2}-\mu_B)\rVert_{\mathbb{L}^p(\mu_{B'})} \geq \frac{1}{2}\varepsilon\mu(B)^{-1}.\]
    Then consider the probability measures
    \[\nu' \vcentcolon= \mu_{B''} * \mu_{B'''}, \quad \nu \vcentcolon= \nu' * \nu'.\]
    Since $\nu$ is supported on $4B_{c/d}'$, it follows by Lemma \ref{lm:domination_by_convolution} that $\mu_{B'} \leq 2\mu_{B_{1+4c/d}'}*\nu$. Furthermore, since $\nu'$ is symmetric, we have $\widehat{\nu} \geq 0$. Hence, Lemma \ref{lm:averaging_argument} and \cite[Lemma 5.8]{kelley-meka} imply that
    \[\max(\lVert f\circ f\rVert_{\mathbb{L}^p(\nu)},\lVert g\circ g\rVert_{\mathbb{L}^p(\nu)}) \geq \frac{1}{4}\varepsilon\mu(B)^{-1},\]
    where we write $f \vcentcolon= \mu_{A_1} - \mu_B$, $g \vcentcolon= \mu_{A_2} - \mu_B$. Thus, by \cite[Proposition 18]{bloom-sisask} and the remarks following \cite[Lemma 7]{bloom-sisask}, there exist $j \in \{1,2\}$ and a positive integer 
    \[p' \ll \varepsilon^{-1}p \ll \varepsilon^{-1}\cL(\gamma)\] 
    such that
    \[\lVert \mu_{A_j} \circ \mu_{A_j}\rVert_{\mathbb{L}^{p'}(\nu)} \geq \Bigl(1+\frac{\varepsilon}{16}\Bigr)\mu(B)^{-1}.\]
    The desired conclusion now follows by applying Lemma \ref{lm:averaging_argument}.
\end{proof}

We next establish a bespoke variant of \cite[Proposition 15]{bloom-sisask}. The main difference is that we allow the sets $A_1,A_2$ to be different and we make explicit the frequency sets of our Bohr sets as well as the dependencies of all bounds on the parameter $\varepsilon$. We should stress that these differences are mainly cosmetic in nature and the proof encompasses the same techniques: dependent random choice and almost-periodicity. The treatment of these techniques is deferred to Appendix \ref{app:aux_kelley_meka} and entails several optimisations of the arguments of \cite{bloom-sisask}.

\begin{proposition}
\label{prop:lp_norm_to_density_increment}
There are absolute constants $c, C > 0$ such that the following holds. Let $B^{(1)}$ and $B^{(2)} = \mathrm{Bohr}(\Gamma;\rho)$ be regular Bohr sets of rank $d$ in $G$ such that $B^{(2)} \subseteq B_{c/d}^{(1)}$. Let $A_1, A_2$ be subsets of translates of a non-empty set $B \subseteq G$ with respective densities $\alpha_1,\alpha_2 > 0$. Suppose that
\[\lVert \mu_{A_1} \circ \mu_{A_2}\rVert_{\mathbb{L}^p(\mu_{B^{(1)}}*\mu_{B^{(2)}+x})} \geq (1+\varepsilon)\mu(B)^{-1}\]
for some $x \in G$, $\varepsilon \in (0,1]$ and integer $p \geq C\varepsilon^{-1}\cL(\varepsilon)$. Then there exist $\Delta \subseteq \widehat{G}$ of size $d'$ and $\rho'$ such that 
\[d' \ll \varepsilon^{-2}p^2\cL(\varepsilon(\alpha_1\alpha_2)^{1/2})^2\cL(\alpha_1)\cL(\alpha_2), \quad \rho' \gg \rho\varepsilon(\alpha_1\alpha_2)^{1/2}/(d^3d')\] 
and $B' = \mathrm{Bohr}(\Gamma\cup\Delta;\rho')$ is a regular Bohr set with the property that
\[\lVert \mu_{A_1}*\mu_{B'}\rVert_{\infty} \geq (1+\varepsilon/2)\mu(B)^{-1}.\]
\end{proposition}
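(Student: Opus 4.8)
\textbf{Proof proposal for Proposition \ref{prop:lp_norm_to_density_increment}.}

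The plan is to follow the template of \cite[Proposition 15]{bloom-sisask}: first convert the hypothesis on the large $\mathbb{L}^p(\mu_{B^{(1)}}*\mu_{B^{(2)}+x})$-norm of the difference convolution $\mu_{A_1}\circ\mu_{A_2}$ into structural information via (an asymmetric, local version of) the dependent random choice argument of Kelley--Meka, and then upgrade that structural information to an honest $\mathbb{L}^\infty$ density increment via almost-periodicity. The technical workhorses for both steps will be the variants of the Kelley--Meka arguments collected in Appendix \ref{app:aux_kelley_meka}, so the job here is mainly to feed them the right inputs and keep track of parameters.

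First I would invoke the local dependent random choice lemma (the appropriate entry of Appendix \ref{app:aux_kelley_meka}, playing the role of the sifting step in \cite{bloom-sisask}) applied to the hypothesis
\[\lVert \mu_{A_1}\circ\mu_{A_2}\rVert_{\mathbb{L}^p(\mu_{B^{(1)}}*\mu_{B^{(2)}+x})}\geq(1+\varepsilon)\mu(B)^{-1}.\]
Unwinding the definition of the $\mathbb{L}^p$-norm against the measure $\mu_{B^{(1)}}*\mu_{B^{(2)}+x}$, this says there is a translate of $B^{(2)}$ — or rather a convolution of translates of $B^{(1)}$ and $B^{(2)}$ — on which the $p$-th moment of $\mu_{A_1}\circ\mu_{A_2}$ is at least $(1+\varepsilon)^p\mu(B)^{-p}$. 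Since $p\geq C\varepsilon^{-1}\cL(\varepsilon)$, the quantity $(1+\varepsilon)^p$ is at least something like $\varepsilon^{-\Omega(1)}$, which is the largeness needed to drive the pigeonholing. Dependent random choice then produces a set $X\subseteq A_1$ (or a translate thereof) of density $\gg(\varepsilon(\alpha_1\alpha_2)^{1/2})^{O(1)}$ inside its ambient Bohr set such that $A_1$ correlates with $X$ on almost all of $X-X$: more precisely, $\mu_{A_1}\circ\mu_{A_1}$ (or $\mu_{A_1}*\mu_X$) is bounded below by roughly $(1+\varepsilon/2)\mu(B)^{-1}$ on a set capturing most of the mass of the relevant difference set. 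The asymmetry between $A_1$ and $A_2$ is handled exactly as in Proposition \ref{prop:few_solutions_to_lp_norm}: one of the two indices survives the pigeonhole, and replacing $A_2$ by $-A_2$ if necessary swaps convolutions and difference convolutions without loss.

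Second I would apply the local almost-periodicity result from Appendix \ref{app:aux_kelley_meka} (the analogue of the Croot--Sisask-type lemma used in \cite[Proposition 15]{bloom-sisask}) to the function $\mu_{A_1}*\mu_X$: this yields a Bohr set $B'=\mathrm{Bohr}(\Gamma\cup\Delta;\rho')$, with $\Delta$ of size $d'\ll \varepsilon^{-2}p^2\cL(\varepsilon(\alpha_1\alpha_2)^{1/2})^2\cL(\alpha_1)\cL(\alpha_2)$ and $\rho'\gg\rho\varepsilon(\alpha_1\alpha_2)^{1/2}/(d^3d')$ (the $d^3$ coming from the usual regularity/nesting losses when intersecting Bohr sets, as in Appendix \ref{app:bohr_sets}), such that $\mu_{A_1}*\mu_X$ is nearly invariant under translation by $B'$ in, say, $\mathbb{L}^2$ or $\mathbb{L}^\infty$. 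Combining this near-invariance with the lower bound from the first step — the correlation of $A_1$ with $X$ holds on a large-measure set, and the Bohr set $B'$ is contained (after regularity) in a small dilate where the function barely moves — lets one find a point where $\mu_{A_1}*\mu_{B'}\geq(1+\varepsilon/2)\mu(B)^{-1}$, which is the desired conclusion. Finally one passes to a regular Bohr set of comparable width and frequency set using the standard regularity lemma, absorbing the resulting constant-factor loss in $\rho'$ into the implied constant.

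The main obstacle I anticipate is bookkeeping of the parameters rather than any conceptual difficulty: one must ensure that the density of the set $X$ produced by dependent random choice, the width loss $\rho'/\rho$ in the almost-periodicity step, and the size $d'$ of the new frequency set all come out with the exact exponents claimed — in particular that the $\cL(\varepsilon(\alpha_1\alpha_2)^{1/2})$ factor (rather than, say, $\cL(\alpha_1)\cL(\alpha_2)$ alone) appears in $d'$ and that $\rho'$ degrades only polynomially in $d$, $d'$, $1/\varepsilon$, and $(\alpha_1\alpha_2)^{-1/2}$. This requires being careful about which quantity plays the role of the density in each black-box application (the density of $A_j$ inside its translate of $B$, versus the density of $X$, versus $\varepsilon$) and tracking the loss from each use of the Bohr-set regularity lemma, but each individual step is a direct transcription of an argument already in \cite{bloom-sisask} or Appendix \ref{app:aux_kelley_meka}.
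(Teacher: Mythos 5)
Your two-step plan (dependent random choice to obtain structured sets, then almost-periodicity to upgrade to an $\mathbb{L}^\infty$ density increment on a Bohr set, following \cite[Proposition 15]{bloom-sisask}) is exactly the plan the paper executes, invoking Lemma \ref{lm:asymmetric_sifting} and then Theorem \ref{thm:almost_periodicity}. However, several of the intermediate claims in your sketch are conflations with other parts of the paper rather than descriptions of what happens here. First, the sifting lemma produces \emph{two} sets $A_1'\subseteq B^{(1)}$ and $A_2'\subseteq B^{(2)}$ with respective densities $\alpha_1'\gg\alpha_1^p$, $\alpha_2'\gg\alpha_2^p$; there is no pigeonholing over $j\in\{1,2\}$ and no replacement of $A_2$ by $-A_2$ at this stage --- those manoeuvres belong to Proposition \ref{prop:few_solutions_to_lp_norm}, which is chained with this result only later in Corollary \ref{cor:few_solutions_to_density_increment}. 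Both $A_1'$ and $A_2'$ are retained and fed into the almost-periodicity step, and the final increment concerns $A_1$ specifically. Second, the density of the sifted sets is $\gg\alpha_j^p$ (whence $\cL(\alpha_j')\ll p\,\cL(\alpha_j)$, which is where the $p^2\cL(\alpha_1)\cL(\alpha_2)$ in $d'$ comes from), not $(\varepsilon(\alpha_1\alpha_2)^{1/2})^{O(1)}$; the $\cL(\varepsilon(\alpha_1\alpha_2)^{1/2})^2$ factor in $d'$ instead arises in the Fourier-analytic part of Theorem \ref{thm:almost_periodicity}, from choosing the truncation parameter $\nu\asymp\varepsilon(\alpha_1\alpha_2)^{1/2}$ and $k\asymp\log(1/\nu)$ in the Croot--Sisask step. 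Third, you need to pass from the level set $S$ to $S'=(A_1'-A_2')\cap S\subseteq B^{(1)}-B^{(2)}$ before applying almost-periodicity, since Theorem \ref{thm:almost_periodicity} requires $S\subseteq B^{(1)}-B^{(2)}$; this small but essential step is missing from your account. None of these gaps affects the skeleton of the argument, but as stated they would derail the parameter tracking that you rightly flag as the crux of the problem.
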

\begin{proof}
    To begin, we may assume for convenience that $x = 0$. Indeed, the general case then follows on observing that
    \[\lVert \mu_{A_1} \circ \mu_{A_2}\rVert_{\mathbb{L}^p(\mu_{B^{(1)}}*\mu_{B^{(2)}+x})} = \lVert \mu_{A_1} \circ \mu_{A_2+x}\rVert_{\mathbb{L}^p(\mu_{B^{(1)}}*\mu_{B^{(2)}})}\]
    and applying the special case with $A_2+x$ instead of $A_2$. Hence, by Lemma \ref{lm:asymmetric_sifting}, we obtain subsets $A_1' \subseteq B^{(1)}$, $A_2' \subseteq B^{(2)}$ of respective densities $\alpha_1' \gg \alpha_1^p$, $\alpha_2' \gg \alpha_2^p$ such that
    \[\langle \mu_{A_1'}\circ\mu_{A_2'}, 1_S\rangle \geq 1-\frac{\varepsilon}{16},\]
    where we write
    \[S \vcentcolon= \{x \in G \mid (\mu_{A_1}\circ\mu_{A_2})(x) \geq (1-\varepsilon/8)\lVert \mu_{A_1}\circ\mu_{A_2}\rVert_{\mathbb{L}^p(\mu_{B^{(1)}}*\mu_{B^{(2)}})}\}.\]
    Letting $S' \vcentcolon= (A_1'-A_2') \cap S$, we have $S' \subseteq B^{(1)}-B^{(2)}$ and
    \[\langle \mu_{A_1'}\circ\mu_{A_2'}, 1_{S'}\rangle \geq 1-\frac{\varepsilon}{16}.\]
    Thus, on applying Theorem \ref{thm:almost_periodicity} with $S'$ in place of $S$, we obtain a set $\Delta \subseteq \widehat{G}$ of size $d'$ and a parameter $\rho'$ such that 
    \[d' \ll \varepsilon^{-2}\cL(\varepsilon(\alpha_1\alpha_2)^{1/2})^2\cL(\alpha_1')\cL(\alpha_2'), \quad \rho' \gg \rho\varepsilon(\alpha_1\alpha_2)^{1/2}/(d^3d')\] 
    and $B' \vcentcolon= \mathrm{Bohr}(\Gamma\cup\Delta;\rho')$ is a regular Bohr satisfying
    \[\lVert \mu_{A_1}*\mu_{B'}\rVert_{\infty} \geq \Bigl(1+\frac{\varepsilon}{2}\Bigr)\mu(B)^{-1},\]
    as required.
\end{proof}

By chaining together Propositions \ref{prop:few_solutions_to_lp_norm} and \ref{prop:lp_norm_to_density_increment}, we immediately obtain the following corollary.

\begin{corollary}
\label{cor:few_solutions_to_density_increment}
There is an absolute constant $c > 0$ such that the following holds. Let $A_1,A_2 \subseteq B$ be subsets of densities at least $\alpha > 0$, where $B = \mathrm{Bohr}(\Gamma;\rho)$ is a regular Bohr set of rank $d$. Let $\varepsilon \in (0,1]$ and $\lambda' \in (0,c\varepsilon\alpha/d]$ be such that $B' = B_{\lambda'}$ is a regular Bohr set. Let $C \subseteq B'$ be a subset of density $\gamma > 0$. If
\[|\langle \mu_{A_1}\circ\mu_{A_2}, \mu_C\rangle - \mu(B)^{-1}| \geq \varepsilon\mu(B)^{-1},\]
then there exist $j \in \{1,2\}$, $\Delta \subseteq \widehat{G}$ of size $\widetilde{d}$ and $\widetilde{\rho}$ such that 
\[\widetilde{d} \ll \varepsilon^{-4}\cL(\min(\varepsilon,\gamma))^2\cL(\varepsilon\alpha)^2\cL(\alpha)^2, \quad \widetilde{\rho} \geq \lambda'\rho\varepsilon\alpha/(d^5\widetilde{d})\] 
and $\widetilde{B} = \mathrm{Bohr}(\Gamma\cup\Delta;\widetilde{\rho})$ is a regular Bohr set with the property that
\[\lVert \mu_{A_j}*\mu_{\widetilde{B}}\rVert_{\infty} \geq \Bigl(1+\frac{\varepsilon}{32}\Bigr)\mu(B)^{-1}.\]
\end{corollary}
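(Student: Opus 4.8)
The statement of Corollary~\ref{cor:few_solutions_to_density_increment} is obtained by simply feeding the output of Proposition~\ref{prop:few_solutions_to_lp_norm} into Proposition~\ref{prop:lp_norm_to_density_increment}, so the plan is to make this chaining precise and track the parameters. First I would apply Proposition~\ref{prop:few_solutions_to_lp_norm} to the sets $A_1, A_2 \subseteq B$ and $C \subseteq B'$; here the hypothesis $B' = B_{\lambda'}$ with $\lambda' \leq c\varepsilon\alpha/d$ matches the requirement $B' \subseteq B_{c\varepsilon\alpha/d}$ (taking the constant $c$ of the corollary to be that of the proposition), and we are free to choose the auxiliary sets $B'', B''' \subseteq B'_{c/d}$. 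The natural choice is $B'' = B''' = B'_{c/d}$, which we may assume to be regular after adjusting $c$ by a bounded factor via the standard regularity lemma for Bohr sets. This produces $j \in \{1,2\}$, an integer $p \ll \varepsilon^{-1}\cL(\gamma)$ and $x \in G$ with
\[\lVert \mu_{A_j}\circ\mu_{A_j}\rVert_{\mathbb{L}^p(\mu_{B''}*\mu_{B'''+x})} \geq \Bigl(1+\frac{\varepsilon}{16}\Bigr)\mu(B)^{-1}.\]

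Next I would invoke Proposition~\ref{prop:lp_norm_to_density_increment} with $A_1 = A_2 = A_j$, with $\varepsilon/16$ in place of $\varepsilon$, and with $B^{(1)} = B^{(2)} = B'_{c/d}$ (which is contained in $(B'_{c/d})_{c/d}$ after yet another harmless adjustment of constants, so the containment hypothesis $B^{(2)} \subseteq B^{(1)}_{c/d}$ holds). The requirement $p \geq C(\varepsilon/16)^{-1}\cL(\varepsilon/16)$ must be arranged: since the $p$ output by Proposition~\ref{prop:few_solutions_to_lp_norm} is only bounded above, I would replace it by $\max(p, \lceil C'\varepsilon^{-1}\cL(\varepsilon)\rceil)$, noting that the $\mathbb{L}^p$-norm inequality is preserved under increasing $p$ because $\mu_{B''}*\mu_{B'''+x}$ is a probability measure (so $\mathbb{L}^p$-norms are monotone in $p$) — this keeps $p \ll \varepsilon^{-1}\cL(\min(\varepsilon,\gamma))$. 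The conclusion is a set $\Delta \subseteq \widehat{G}$ of size $d'$ and a radius $\rho'$ with
\[d' \ll \varepsilon^{-2}p^2\cL(\varepsilon\alpha_j)^2\cL(\alpha_j)^2, \qquad \rho' \gg \rho''\varepsilon\alpha_j/((d'')^3 d'),\]
where $\rho''$ and $d''$ are the radius and rank of $B'_{c/d}$, and $\widetilde{B} = \mathrm{Bohr}(\Gamma'' \cup \Delta; \rho')$ is regular with $\lVert \mu_{A_j}*\mu_{\widetilde{B}}\rVert_\infty \geq (1+\varepsilon/32)\mu(B)^{-1}$.

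It remains to simplify the parameter bounds into the stated form. For the rank: $B'_{c/d}$ has the same frequency set $\Gamma$ as $B$, so $d'' = d$, and substituting $p \ll \varepsilon^{-1}\cL(\min(\varepsilon,\gamma))$ and $\alpha_j \geq \alpha$ gives
\[d' \ll \varepsilon^{-4}\cL(\min(\varepsilon,\gamma))^2\cL(\varepsilon\alpha)^2\cL(\alpha)^2,\]
which is exactly $\widetilde{d}$. For the radius: $B'_{c/d} = (B_{\lambda'})_{c/d}$ has radius $\rho'' \asymp \lambda'\rho/d$ (since $B = \mathrm{Bohr}(\Gamma;\rho)$ and $B_{\lambda'}$ has radius $\lambda'\rho$), so
\[\rho' \gg \frac{\lambda'\rho}{d}\cdot\frac{\varepsilon\alpha}{d^3\widetilde{d}} = \frac{\lambda'\rho\varepsilon\alpha}{d^4\widetilde{d}} \geq \frac{\lambda'\rho\varepsilon\alpha}{d^5\widetilde{d}},\]
matching the stated $\widetilde{\rho}$. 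Setting $\Gamma \cup \Delta$ (noting $\Gamma'' = \Gamma$) and renaming $\rho' \to \widetilde{\rho}$ completes the argument. I expect the only genuinely fiddly point to be the bookkeeping of which constant $c$ is used where — one must choose a single $c$ small enough to simultaneously satisfy the regularity adjustments and the containment hypotheses of both propositions — but this is routine, as is the monotonicity observation needed to boost $p$ above the threshold $C\varepsilon^{-1}\cL(\varepsilon)$.
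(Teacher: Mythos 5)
Your high-level plan — chain Proposition~\ref{prop:few_solutions_to_lp_norm} into Proposition~\ref{prop:lp_norm_to_density_increment} and track parameters — is exactly the paper's approach, and your observation about boosting $p$ up to the threshold $C\varepsilon^{-1}\cL(\varepsilon)$ via monotonicity of $\mathbb{L}^p$-norms under probability measures is the right way to understand why $\cL(\min(\varepsilon,\gamma))$ appears. However, the specific choice $B'' = B''' = B'_{c/d}$ is a genuine error. Proposition~\ref{prop:lp_norm_to_density_increment} requires $B^{(2)} \subseteq B^{(1)}_{c/d}$, and you set $B^{(1)} = B^{(2)} = B'_{c/d}$, justifying the hypothesis by the claim that $B'_{c/d}$ is ``contained in $(B'_{c/d})_{c/d}$ after adjusting constants.'' This is false, and cannot be repaired by constants: for $c/d < 1$ a dilate shrinks the Bohr set, so $(B'_{c/d})_{c/d} = B'_{(c/d)^2} \subsetneq B'_{c/d}$ and the containment runs in the opposite direction. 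You cannot have $B^{(2)} \subseteq B^{(1)}_{c/d}$ with $B^{(1)} = B^{(2)}$ unless everything is degenerate.

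The fix, which is what the paper does, is to take two nested regular dilates: use Lemma~\ref{lm:regular_dilate} to find $\lambda'', \lambda''' \in [c/(2d), c/d]$ such that $B'' := B'_{\lambda''}$ and $B''' := B''_{\lambda'''}$ are both regular. Then $B'', B''' \subseteq B'_{c/d}$ as required by Proposition~\ref{prop:few_solutions_to_lp_norm}, and since $B''' = B''_{\lambda'''}$ with $\lambda''' \leq c/d$ the containment $B^{(2)} = B''' \subseteq B''_{c/d} = B^{(1)}_{c/d}$ for Proposition~\ref{prop:lp_norm_to_density_increment} genuinely holds. The width of $B'''$ is then $\lambda'\lambda''\lambda'''\rho \gg \lambda'(c/d)^2\rho$, and feeding this into the $\rho' \gg \rho_{B^{(2)}}\varepsilon\alpha/(d^3 d')$ bound of Proposition~\ref{prop:lp_norm_to_density_increment} yields $\widetilde{\rho} \gg \lambda'\rho\varepsilon\alpha/(d^5\widetilde{d})$, i.e.\ the stated $d^5$ in the denominator (your calculation, which gave $d^4$, is an artefact of the invalid single-dilate choice). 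The rest of your bookkeeping of the rank bound is fine.
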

\begin{proof}
    By Lemma \ref{lm:regular_dilate}, there exist $\lambda'', \lambda''' \in [c/(2d),c/d]$ such that the Bohr sets $B'' \vcentcolon= B_{\lambda''}'$ and $B''' \vcentcolon= B_{\lambda'''}''$ are regular. In particular, $B'''$ has width $\lambda'\lambda''\lambda'''\rho \gg \lambda'(c/d)^2\rho$. We may now conclude by applying Proposition \ref{prop:few_solutions_to_lp_norm} followed by Proposition \ref{prop:lp_norm_to_density_increment}.
\end{proof}

We are now in a position to prove Proposition \ref{prop:density_increment}. In doing so, the main technical difficulties arise from working with dilates of Bohr sets on different scales, of which there are linearly many in $k$. Likewise, the need to keep track of how the density increment parameters depend on $k$ adds an extra layer of bookkeeping. This is in contrast to the situation in Roth's theorem \cite{bloom-sisask}, where the corresponding constants can be taken to be absolute.

\begin{proof}[Proof of Proposition \ref{prop:density_increment}.]
    Throughout the proof, we let $c,C > 0$ denote a sufficiently small and sufficiently large absolute constant, respectively. Let $\delta = \delta(\frac{1}{2},\binom{k}{2}) \asymp k^{-4}$ be as in Theorem \ref{thm:graph_counting} and define the parameters
    \[\gamma \vcentcolon= 2^{-10}\delta, \quad \lambda \vcentcolon= \frac{c\alpha\gamma}{2dk}.\]
    In particular, we have $\lambda \gg k^{-5}\alpha/d$. We introduce the Bohr set $\widetilde{B} \vcentcolon= B \cap (2 \cdot B)$. Note that $\widetilde{B}$ has width $\rho$ and frequency set
    \[\widetilde{\Gamma} \vcentcolon= \Gamma \cup \{\gamma \psi_2^{-1} \mid \gamma \in \Gamma\},\]
    so its rank is at most $2d$. By Lemma \ref{lm:regular_dilate}, we can inductively choose parameters $\lambda_1, \ldots, \lambda_{k+1} > 0$ such that $\lambda_{k+1} = 1$ and for all $j \in [k]$ we have that $\frac{1}{2}\lambda\lambda_{j+1} \leq \lambda_j \leq \lambda\lambda_{j+1}$ and $B^{(j)} \vcentcolon= \widetilde{B}_{\lambda_j}$ is a regular Bohr set. In particular, for all $1 \leq i < j \leq k+1$ we have $\lambda_i \geq (\lambda/2)^{j-i}\lambda_j$. By choice of $\lambda$, we may apply \cite[Lemma 3.4]{filmus-hatami-hosseini-kelman} to the family of Bohr sets
    \[\mathcal{B} \vcentcolon= \{B^{(j)} \mid j \in [k]\} \cup \Bigl\{\frac{1}{2}\cdot B^{(j)}\ \Big|\ j \in [k]\Bigr\}\]
    to obtain an $x \in G$ such that, writing $A' \vcentcolon= A-x$, we either have $|\mu_{B'}(A')-\alpha| \leq \gamma\alpha$ for all $B'\in\mathcal{B}$ or there exists $B' \in \mathcal{B}$ such that $\mu_{B'}(A') \geq \Bigl(1+\frac{\gamma}{4k}\Bigr)\alpha$. In the latter case, we obtain the desired density increment (ii) straight away, so we may assume that the former holds.
    
    We now set the stage for an application of the graph counting lemma. Define for each $j \in [k]$ the sets $A_j \vcentcolon= A' \cap B^{(j)}$ and $\widetilde{A}_j \vcentcolon= A' \cap \Bigl(\frac{1}{2} \cdot B^{(j)}\Bigr)$ and for each $1 \leq i < j \leq k$ the set
    \[A_{i,j} \vcentcolon = \{(x,y) \in A_i \times A_j \mid x+y \in 2 \cdot \widetilde{A}_j\}.\]
    In particular, our assumption implies that $A_j$ and $2 \cdot \widetilde{A}_j$ are subsets of $B^{(j)}$ of densities belonging to the interval $[(1-\gamma)\alpha,(1+\gamma)\alpha] \subseteq [\alpha/2,2\alpha]$. Let $\alpha_{i,j}$ be the density of $A_{i,j}$ inside $A_i \times A_j$ and set $\alpha^* \vcentcolon= \min_{1\leq i <j\leq k}\alpha_{i,j}$. Then note that we have the expression
    \begin{equation}\label{eq:expression_for_density}
        \alpha_{i,j} = \langle \mu_{A_i}*\mu_{A_j}, 1_{2\cdot\widetilde{A}_j}\rangle.
    \end{equation}
    Assume to begin with that there exist $1 \leq i < j \leq k$ such that $|\alpha_{i,j}-\alpha| \geq \delta\alpha/4$. As $\gamma \leq \delta/8$, it follows from \eqref{eq:expression_for_density} that
    \begin{equation}\label{eq:discrepancy_in_3aps}
        |\langle 1_{2 \cdot \widetilde{A}_j} \circ \mu_{A_j}, \mu_{A_i} \rangle - \mu_{B^{(j)}}(2 \cdot \widetilde{A}_j)|\geq \frac{\delta\alpha}{8}.
    \end{equation}
    Since we have
    \begin{equation}\label{eq:bound_on_rel_density}
        \mu(2\cdot\widetilde{A}_j) = \mu_{B^{(j)}}(2\cdot\widetilde{A}_j)\mu(B^{(j)}) \leq 2\alpha\mu(B^{(j)}),
    \end{equation}
    dividing both sides of \eqref{eq:discrepancy_in_3aps} by $\mu(2\cdot\widetilde{A}_j)$ gives
    \begin{equation}\label{eq:starting_assumption}
        |\langle \mu_{2 \cdot \widetilde{A}_j} \circ \mu_{A_j}, \mu_{A_i} \rangle - \mu(B^{(j)})^{-1}| \geq \delta\mu(B^{(j)})^{-1}/16.
    \end{equation}
    By Corollary \ref{cor:few_solutions_to_density_increment} applied to $2\cdot\widetilde{A}_j, A_j, B^{(j)}, A_i$ in place of $A_1, A_2, B, C$ respectively and $\lambda' = \lambda_i/\lambda_j$, $\varepsilon = \delta/16$, we obtain a set $D \in \{2\cdot\widetilde{A}_j,A_j\}$, a set $\Delta \subseteq \widehat{G}$ of size $\widetilde{d}' \ll k^{16}(\log k)^4\cL(\alpha)^6$ and a parameter
    \[\widetilde{\rho}' \geq \rho\exp(-O(k(\cL(\alpha/d) + \log k)))\]
    such that $\widetilde{B}' = \mathrm{Bohr}(\widetilde{\Gamma}\cup\Delta;\widetilde{\rho}')$ is a regular Bohr set and
    \[\lVert 1_D*\mu_{\widetilde{B}'}\rVert_{\infty} \geq (1-\gamma)(1+2^{-9}\delta)\alpha \geq (1+2^{-11}\delta)\alpha.\]
    Recalling that $A_j,\widetilde{A}_j$ are subsets of a translate of $A$, we have
    \[\lVert 1_A * \mu_{\widetilde{B}'}\rVert_{\infty} \geq \lVert 1_{A_j} * \mu_{\widetilde{B}'}\rVert_{\infty},\] 
    \[\lVert 1_A * \mu_{\frac{1}{2}\cdot\widetilde{B}'}\rVert_{\infty} \geq \lVert 1_{\widetilde{A}_j} * \mu_{\frac{1}{2}\cdot\widetilde{B}'}\rVert_{\infty} = \lVert 1_{2\cdot \widetilde{A}_j} * \mu_{\widetilde{B}'}\rVert_{\infty}.\]
    This means that the alternative (ii) holds, so we are done in this case. 
    
    Hence, we may assume from now on that for all $1 \leq i < j \leq k$ we have $|\alpha_{i,j}-\alpha| \leq \delta\alpha/4$. In other words, the density of $A_{i,j}$ is roughly what one would naively expect it to be. In particular, we have $\alpha^* \geq \alpha/2$. Applying Theorem \ref{thm:graph_counting} with $\varepsilon = \frac{1}{2}$, $H = ([k], \{(i,j)\mid 1 \leq i<j \leq k\})$, $X_j = A_j$ for $j \in [k]$ and $\alpha^*$ in place of $\alpha$, we obtain that either
    \begin{equation}\label{eq:expected_count}
        \E_{(x_1,\ldots,x_k) \in A_1\times\ldots\times A_k}\prod_{1\leq i <j\leq k}1_{A_{i,j}}(x_i,x_j) \geq \frac{1}{2}\prod_{1\leq i<j \leq k}\alpha_{i,j}
    \end{equation}
    or there exist $1 \leq i < j \leq k$ such that either there exist subsets $S \subseteq A_i$, $T \subseteq A_j$ of densities at least $\exp(-O(k^6\cL(\alpha^*)^2))$ satisfying
    \begin{equation}\label{eq:density_increment_on_rect}
        \E_{(x,y)\in S\times T}1_{A_{i,j}}(x,y) \geq (1+\delta)\alpha_{i,j};
    \end{equation}
    or there exists a subset $S \subseteq A_i$ of density at least $\exp(-O((k^2\log k)\cL(\alpha^*)))$ such that for each $x \in S$ we have
    \begin{equation}\label{eq:few_neighbours}
        \E_{y\in A_j}1_{A_{i,j}}(x,y) \leq (1-\delta)\alpha_{i,j}.
    \end{equation}
    We deal with these three cases in turn. First, if \eqref{eq:expected_count} holds, then by translation-invariance, we have
    \[\E_{x_1,\ldots,x_k\in G}\prod_{1\leq i\leq j\leq k}1_A\Bigl(\frac{x_i+x_j}{2}\Bigr) = \E_{x_1,\ldots,x_k\in G}\prod_{1\leq i\leq j\leq k}1_{2\cdot A'}(x_i+x_j).\]
    By passing to subsets, we may bound this from below by
    \[\E_{x_1,\ldots,x_k\in G}\prod_{j=1}^{k}1_{A_j}(x_j)\prod_{1\leq i< j\leq k}1_{2\cdot \widetilde{A}_j}(x_i+x_j) =  \prod_{j=1}^{k}\mu(A_j)\E_{(x_1,\ldots,x_k)\in A_1\times\ldots\times A_k}\prod_{1\leq i<j\leq k}1_{A_{i,j}}(x_i,x_j).\]
    Since $\mu_{B^{(j)}}(A_j) \geq (1-\gamma)\alpha$ for all $j \in [k]$ and $\alpha_{i,j} \geq (1-\delta/4)\alpha$ for all $1 \leq i<j \leq k$, we have the further lower bound
    \[\frac{1}{2}(1-\gamma)^k\Bigl(1-\frac{\delta}{4}\Bigr)^{\binom{k}{2}}\alpha^{\binom{k+1}{2}}\prod_{j=1}^{k}\mu(B^{(j)}).\]
    Furthermore, by Lemma \ref{lm:size_bohr_sets}, we have
    \[\prod_{j=1}^{k}\mu(B^{(j)}) \geq \prod_{j=1}^{k}(\lambda_j\rho/8)^{2d} \geq \prod_{j=1}^{k}((\lambda/2)^{k-j+1}\rho/8)^{2d} \geq \exp(-O((k^2\log k)d\cL(\alpha/d)))\rho^{2kd}.\]
    Since $(1-\gamma)^k, (1-\delta/4)^{\binom{k}{2}} \gg 1$, we obtain a lower bound for the density of $k$-configurations in $A$ of the same form as above, so (i) holds.
    
    Suppose now that we are in the case when \eqref{eq:few_neighbours} holds. Then for all $x \in S$ we have
    \[(1_{2\cdot \widetilde{A}_j}\circ \mu_{A_j})(x) \leq (1-\delta)\alpha_{i,j},\]
    so by averaging over $S$ and using $\alpha_{i,j} \leq (1+\delta/4)\alpha$, we obtain that
    \[\langle 1_{2\cdot \widetilde{A}_j}\circ \mu_{A_j}, \mu_S\rangle \leq \Bigl(1-\frac{\delta}{2}\Bigr)\alpha.\]
    Since $\gamma \leq \delta/4$, it follows that
    \[|\langle 1_{2 \cdot \widetilde{A}_j} \circ \mu_{A_j}, \mu_S \rangle - \mu_{B^{(j)}}(2 \cdot \widetilde{A}_j)|\geq \frac{\delta\alpha}{4}.\]
    On dividing through by $\mu(2\cdot\widetilde{A}_j)$ and using \eqref{eq:bound_on_rel_density}, we arrive at the conclusion that
    \[|\langle \mu_{2 \cdot \widetilde{A}_j} \circ \mu_{A_j}, \mu_S \rangle - \mu(B^{(j)})^{-1}| \geq \delta\mu(B^{(j)})^{-1}/8.\]
    By applying Corollary \ref{cor:few_solutions_to_density_increment} in the same way as before, except now with $\varepsilon = \delta/8$ and $S$ playing the role of $C$, we obtain a set $D \in \{2\cdot\widetilde{A}_j,A_j\}$, a set $\Delta \subseteq \widehat{G}$ of size $\widetilde{d}' \ll k^{20}(\log k)^4\cL(\alpha)^6$ and a parameter
    \[\widetilde{\rho}' \geq \rho\exp(-O(k(\cL(\alpha/d) + \log k)))\]
    such that $\widetilde{B}' = \mathrm{Bohr}(\widetilde{\Gamma}\cup\Delta;\widetilde{\rho}')$ is a regular Bohr set and
    \[\lVert 1_D*\mu_{\widetilde{B}'}\rVert_{\infty} \geq (1-\gamma)(1+2^{-8}\delta)\alpha \geq (1+2^{-10}\delta)\alpha.\]
    By the same token as before, we conclude from this that the alternative (ii) holds.
    
    It remains to deal with the case when \eqref{eq:density_increment_on_rect} holds. Since $\mu_{B^{(i)}}(A_i),\mu_{B^{(j)}}(A_j) \geq \alpha/2$, we have the following lower bound on the relative densities of $S$ and $T$:
    \[\mu_{B^{(i)}}(S),\mu_{B^{(j)}}(T) \geq \exp(-O(k^6\cL(\alpha)^2)).\]
    Moreover, since $\alpha_{i,j}\geq(1-\delta/4)\alpha$, we obtain that
    \[\langle \mu_S*\mu_T, 1_{2\cdot\widetilde{A}_j}\rangle \geq \Bigl(1+\frac{\delta}{2}\Bigr)\alpha.\]
    Starting from this assumption, a suitable density increment can be deduced in a similar fashion as in \cite[pp.\ 29-31]{filmus-hatami-hosseini-kelman}. On dividing through by $\mu(2\cdot\widetilde{A}_j)$ and recalling that $\mu_{B^{(j)}}(2\cdot\widetilde{A}_j) \leq (1+\delta/8)\alpha$, one finds that
    \[\langle \mu_{2\cdot\widetilde{A}_j}\circ \mu_T, \mu_S\rangle \geq \Bigl(1+\frac{\delta}{4}\Bigr)\mu(B^{(j)})^{-1}.\]
    Hence, by Hölder's inequality, for any $p \in (1,\infty)$ we have
    \begin{align*}
        \mu_{B^{(i)}}(S)^{1-\frac{1}{p}}\lVert \mu_{2\cdot\widetilde{A}_j}\circ \mu_T\rVert_{\mathbb{L}^p(\mu_{B^{(i)}})} 
        &= \lVert \mu_{2\cdot\widetilde{A}_j}\circ \mu_T\rVert_{\mathbb{L}^p(\mu_{B^{(i)}})}\lVert 1_S\rVert_{\mathbb{L}^{p/(p-1)}(\mu_{B^{(i)}})}\\
        &\geq \langle \mu_{2\cdot\widetilde{A}_j}\circ \mu_T, 1_S\rangle_{\mathbb{L}^2(\mu_{B^{(i)}})}\\
        &= \mu_{B^{(i)}}(S)\langle \mu_{2\cdot\widetilde{A}_j}\circ \mu_T, \mu_S\rangle,
    \end{align*}
    whence we obtain that
    \[\lVert \mu_{2\cdot\widetilde{A}_j}\circ \mu_T\rVert_{\mathbb{L}^p(\mu_{B^{(i)}})} \geq \Bigl(1+\frac{\delta}{4}\Bigr)\mu_{B^{(i)}}(S)^{1/p}\mu(B^{(j)})^{-1}.\]
    For $p \geq  C\delta^{-1}\cL(\mu_{B^{(i)}(S)})$ we have $(1+\delta/8)^p \geq \mu_{B^{(i)}}(S)^{-1}$ and hence
    \[\lVert \mu_{2\cdot\widetilde{A}_j}\circ \mu_T\rVert_{\mathbb{L}^p(\mu_{B^{(i)}})} \geq \Bigl(1+\frac{\delta}{16}\Bigr)\mu(B^{(j)})^{-1}.\]
    In order to apply Proposition \ref{prop:lp_norm_to_density_increment}, we must first pass to $\mathbb{L}^p$-norms with respect to a convolution of two Bohr sets. To this end, we first use Lemma \ref{lm:regular_dilate} to choose parameters $\lambda', \lambda'' \in [c/(2d),c/d]$ such that $B' \vcentcolon= B^{(i)}_{\lambda'}$ and $B'' \vcentcolon= B_{\lambda''}'$ are regular Bohr sets. By Lemma \ref{lm:domination_by_convolution}, we then have 
    \[\mu_{B^{(i)}} \leq 2\mu_{B^{(i)}_{1+2c/d}}*\nu,\]
    where we define $\nu \vcentcolon= \mu_{B'}*\mu_{B''}$. Hence, Lemma \ref{lm:averaging_argument} supplies us with a shift $x \in G$ such that
    \[\lVert \mu_{2\cdot\widetilde{A}_j}\circ \mu_T\rVert_{\mathbb{L}^p(\mu_{B'}*\mu_{B''+x})} \geq 2^{-1/p}\Bigl(1+\frac{\delta}{16}\Bigr)\mu(B^{(j)})^{-1}.\]
    If $p$ is chosen so that $p \geq 32/\delta$, we will have $(1+\delta/32)^p \geq 2$ and hence
    \[\lVert \mu_{2\cdot\widetilde{A}_j}\circ \mu_T\rVert_{\mathbb{L}^p(\mu_{B'}*\mu_{B''+x})} \geq \Bigl(1+\frac{\delta}{64}\Bigr)\mu(B^{(j)})^{-1}.\]
    Invoking Proposition \ref{prop:lp_norm_to_density_increment} with $\varepsilon = \delta/64$ and $B^{(j)},B',B'',2\cdot\widetilde{A}_j,T$ in place of $B,B^{(1)},B^{(2)},A_1,A_2$ respectively, we obtain a set $\Delta \subseteq \widehat{G}$ of size $\widetilde{d}' \ll k^{46}\cL(\alpha)^{11}$ and a parameter
    \[\widetilde{\rho}' \geq \rho\exp(-O(k\cL(\alpha/d)+k^6\cL(\alpha)^2))\]
    such that $\widetilde{B}' \vcentcolon= \mathrm{Bohr}(\widetilde{\Gamma}\cup\Delta;\widetilde{\rho}')$ is a regular Bohr set and
    \[\lVert 1_{2\cdot\widetilde{A}_j}*\mu_{\widetilde{B}'}\rVert_{\infty} \geq (1-\gamma)(1+2^{-7}\delta)\alpha \geq (1+2^{-9}\delta)\alpha.\]
    Similarly as in the previous case, we infer that the alternative (ii) holds, thereby completing the proof.
\end{proof}

\section{The Erdős--Moser sum-free set problem}\label{sec:erdos_moser}

In this short section we apply our results concerning $k$-configurations to the Erdős--Moser problem. We deduce Theorem \ref{thm:erdos_moser} from Theorem \ref{thm:main_result} much in the same way as \cite[Corollary 1.4]{shao} is deduced from \cite[Theorem 1.3]{shao}. It follows from the following result, which is \cite[Proposition 2.7]{sanders} with explicit values for the implied constants.

\begin{proposition}
\label{prop:extract_sum_free}
Let $k$ be a sufficiently large positive integer and let $X \subseteq Y \subseteq \mathbb{Z}$ be such that $|X| \geq \exp(k^{68+o(1)})$ and $|Y| \leq (1+k^{-29})|X|$. Then there exists $S \subseteq X$ of size $k$ which is sum-free with respect to $Y$.
\end{proposition}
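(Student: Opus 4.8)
The plan is to deduce this from Theorem~\ref{thm:main_result} by a greedy extraction argument. Set $Z := Y \setminus X$, so $|Z| \le k^{-29}|X|$, and write $N := |X|$. I would build the set $S = \{s_1,\ldots,s_k\} \subseteq X$ one element at a time, maintaining the invariant that after choosing $s_1,\ldots,s_r$, none of the pairwise sums $s_i + s_j$ (for $i < j$) lies in $Y$, and moreover a large ``reservoir'' $X_r \subseteq X$ of admissible candidates remains from which the next element can be drawn. The key point is to control how much the reservoir shrinks at each step. When we adjoin a new element $x$ to a partial set $\{s_1,\ldots,s_r\}$, the new forbidden sums are $x + s_i$ for $i \in [r]$; requiring $x + s_i \notin Y$ for all $i$ removes at most $r|Y| / \ldots$ — no, more carefully: the bad $x$ are those with $x \in (Y - s_i)$ for some $i$, a set of size at most $r|Y|$, which is far too lossy. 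So a naive greedy argument fails, and this is exactly where $k$-configurations enter.

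Instead I would follow Sanders' strategy (\cite[Proposition 2.7]{sanders}): the obstruction to greedily extending is precisely the presence of many $3$-term configurations $s_i, s_j, \frac{s_i+s_j}{2}$-type patterns tying the chosen elements to $Y$. Concretely, at a stage where the reservoir has become small, one shows that $X$ (or a dilate/translate of it, living in a suitable cyclic group of odd order so that division by $2$ makes sense) must contain a non-degenerate $k$-configuration among elements forced to avoid $Y$; but since $|Y \setminus X|$ is tiny, such a configuration essentially lives inside $X$, and one passes from the configuration $x_1,\ldots,x_k$ together with the means $\frac{x_i+x_j}{2}$ to a sum-free set. The mechanism is: the pairwise means all lying in $A$ for a set $A$ of density $\alpha$ inside $[N']$ is guaranteed once $N' \ge \exp(Ck^{68}\cL(\alpha)^{16})$, and with $\alpha \asymp 1$ (after the tiny set $Z$ is discarded this is a positive constant, or at worst $\alpha \ge 1 - k^{-29}$, giving $\cL(\alpha) = O(1)$) the hypothesis $|X| \ge \exp(k^{68+o(1)})$ suffices. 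I would embed $X$ into $\mathbb{Z}/N'\mathbb{Z}$ with $N' = 2N+1$, apply Theorem~\ref{thm:main_result} to the image of $X$ (density $\gg 1$), extract a non-degenerate $k$-configuration generated by $x_1,\ldots,x_k$, and then — using $|Z|$ small to absorb the few means that might hit $Z$ rather than $X$ by a pigeonhole/deletion argument — convert the configuration into the desired $S$ of size $k$ with $s_i + s_j \notin Y$.

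The conversion step deserves a word: in the classical reduction, from a configuration one does not directly read off a sum-free set; rather one uses that the $x_i$ avoid $2 \cdot A$-type conditions. I would instead run the argument of \cite[\S2]{sanders} verbatim, tracking constants: Sanders shows that $F(\alpha,k) \le \exp((k\cL(\alpha))^{O(1)})$ combined with a density-perturbation trick ($|Y| \le (1+k^{-c})|X|$) yields sum-free sets of size $k$; the exponent $68$ in our $F$-bound and the loss in the reduction combine to the stated $|X| \ge \exp(k^{68+o(1)})$ and the permitted perturbation $k^{-29}$. The numerology $68$, $29$ is exactly calibrated so that, after Theorem~\ref{thm:main_result}'s exponents $68$ on $k$ and $16$ on $\cL(\alpha)$ are fed through the reduction (which costs a few powers of $k$ in translating ``many configurations'' into ``a genuine extension of the greedy set''), everything closes.

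The main obstacle I anticipate is not any single hard inequality but the bookkeeping in the reduction from ``$A$ contains a $k$-configuration'' to ``$X$ contains a $k$-subset sum-free with respect to $Y$'': one must (a) choose the ambient odd-order group and the density parameter so that Theorem~\ref{thm:main_result} applies with $\cL(\alpha) = O(1)$, (b) handle the passage from $Y$ to $X$ — the $k^{-29}$ slack is what lets a deletion argument guarantee a configuration all of whose relevant elements lie in $X$ and whose generators give non-colliding sums outside $Y$, and (c) verify the generators $x_1,\ldots,x_k$ (or a suitable affine image of them) actually form a set sum-free with respect to $Y$, which is where the structure of the linear system defining a $k$-configuration — the means $\frac{x_i+x_j}{2}$ — is used rather than just its size. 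I expect step (b) to be the most delicate, since it is what forces the precise exponent $29$.
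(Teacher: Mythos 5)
Your high-level plan is broadly right — reduce to Theorem~\ref{thm:main_result} via the Sanders/Shao reduction — but the two concrete steps you sketch for the reduction itself are both wrong, and the missing piece is exactly the structural ingredient that makes this work.

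First, you propose embedding $X$ into $\mathbb{Z}/(2N+1)\mathbb{Z}$ ``with density $\gg 1$'' via the natural projection and then applying Theorem~\ref{thm:main_result}. This fails for two reasons. The natural projection $\mathbb{Z}\to\mathbb{Z}/N'\mathbb{Z}$ is a Freiman isomorphism only on sets contained in a short interval; an arbitrary finite $X\subseteq\mathbb{Z}$ of size $N$ need not sit inside any interval of length $O(N)$, so additive relations are not preserved by this map at all. And the density $\alpha\approx 1$ you are reading off is the density of $X$ in $Y$, not the density of an embedded image in a cyclic group: the paper has to work with $\alpha\gg k^{-724}$, so that $\cL(\alpha)\asymp\log k$ and the $\cL(\alpha)^{16}$ factor contributes the $k^{o(1)}$ in $\exp(k^{68+o(1)})$.

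The ingredient you are missing is Proposition~\ref{prop:disjoint_dilate}: if no sum-free $S\subseteq X$ of size $k$ exists, then (via the Balog--Szemer\'edi--Gowers--Freiman machinery, as in Sudakov--Szemer\'edi--Vu) there is $X'\subseteq X$ with $|X'|\gg k^{-29}|X|$, small doubling $|X'+X'|\ll k^{181}|X'|$, \emph{and} $(2\cdot X')\cap Y=\varnothing$. It is this small doubling that licenses Ruzsa's embedding (Lemma~\ref{lm:ruzsa_embedding}), after Pl\"unnecke--Ruzsa, yielding a Freiman $2$-isomorphic copy $X''$ of density $\gg k^{-724}$ inside an odd cyclic group of order $\ll k^{724}|X'|$. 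And it is the disjointness $(2\cdot X')\cap Y=\varnothing$, not any deletion or pigeonhole argument on $Z=Y\setminus X$, that converts a non-degenerate $k$-configuration generated by $x_1,\ldots,x_k\in X''$ into the sum-free set: each $(x_i+x_j)/2\in X''\subseteq X'$ gives $x_i+x_j\in 2\cdot X'$, hence $x_i+x_j\notin Y$. Your proposed ``absorb the few means that hit $Z$'' step is not needed and would not even be well-posed without the small doubling/embedding machinery in place, since without it the $k$-configuration found in the cyclic group has no meaning back in $\mathbb{Z}$.
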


Assuming Proposition \ref{prop:extract_sum_free}, the proof of Theorem \ref{thm:erdos_moser} is the same as that of \cite[Theorem 1.2]{sanders} given \cite[Proposition 2.7]{sanders} (cf.\ the proof of \cite[Corollary 1.4]{shao} given \cite[Proposition 6.1]{shao} and also of \cite[Theorem 1.2]{sudakov-szemeredi-vu} given \cite[Theorem 1.1]{sudakov-szemeredi-vu}). Hence, we focus on establishing Proposition \ref{prop:extract_sum_free}. This will be accomplished by combining Theorem \ref{thm:main_result} with two extra ingredients, the first of which is the following.

\begin{proposition}
\label{prop:disjoint_dilate}
Let $k$, $X$ and $Y$ be as in the statement of Proposition \ref{prop:extract_sum_free} and suppose that the conclusion of Proposition \ref{prop:extract_sum_free} fails. Then there exists $X' \subseteq X$ such that $|X'| \gg k^{-29}|X|$, $|X'+X'| \ll k^{181}|X'|$ and $(2\cdot X')\cap Y = \varnothing$.
\end{proposition}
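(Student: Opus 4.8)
\emph{Proof plan.} The plan is to reformulate the hypothesis as a statement about a graph on $X$, extract additive structure from the resulting edge density, and — crucially — arrange the dilate condition $(2\cdot X')\cap Y=\varnothing$ for free by first passing to a large subset of $X$ on which it holds automatically. To begin, I would introduce the graph $\mathcal{G}$ with vertex set $X$ in which two distinct vertices $x,x'$ are joined whenever $x+x'\in Y$. An independent set of $\mathcal{G}$ is exactly a subset of $X$ which is sum-free with respect to $Y$, so the failure of the conclusion of Proposition \ref{prop:extract_sum_free} says precisely that $\mathcal{G}$ has no independent set of size $k$. Applying Turán's theorem to the complement graph then shows that $\mathcal{G}$ has $\gg |X|^2/k$ edges (here we use that $|X|$ is huge), i.e.\ a $\gg 1/k$ proportion of the ordered pairs $(x,x')\in X\times X$ satisfy $x+x'\in Y$. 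The same argument applies verbatim with any subset $X_0\subseteq X$ in place of $X$, since $\mathcal{G}[X_0]$ also has independence number at most $k-1$.

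Next I would deal with the dilate condition. Set $D:=\{x\in X : 2x\in Y\}$ and $X_0:=X\setminus D$. If $|X_0|\gg k^{-c}|X|$ for some small absolute constant $c$, it then suffices to find $X'\subseteq X_0$ of size $\gg |X_0|/k^{O(1)}$ with $|X'+X'|\ll k^{O(1)}|X'|$, because $(2\cdot X')\cap Y=\varnothing$ is automatic for $X'\subseteq X_0$. The key claim — which I expect to be the main obstacle — is precisely this lower bound on $|X_0|$, and it should follow from the fact that a finite set of integers cannot be closed under halving. Quantitatively, one has $D\cup(2\cdot D)\subseteq Y$, and for the iterates $D^{(0)}:=D$, $D^{(j)}:=\{x\in X : 2x,4x,\dots,2^{j+1}x\in Y\}$ one checks that for each odd $x\in D^{(j)}$ the dilates $x,2x,\dots,2^{j+1}x$ are $j+2$ pairwise distinct elements of $Y$, and that dilate-chains attached to distinct odd representatives (grouping the elements of $D^{(j)}$ by their odd part) are pairwise disjoint; hence $|Y|\geq |D^{(j)}|+(j+1)N_j$, where $N_j$ is the number of distinct odd parts occurring in $D^{(j)}$. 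Feeding in $|Y|\leq(1+k^{-29})|X|$ and combining these estimates over an appropriate range of $j$ (stripping a global factor of $2$ whenever $X$ happens to be almost entirely even, which can be done only $O(k^{29})$ times before $|X|$ itself drops) forces $|D|$ to lie below $(1-\Omega(k^{-c}))|X|$ — otherwise one would be packing more than $|Y|$ pairwise distinct elements into $Y$. This gives the required bound $|X_0|\gg k^{-c}|X|$.

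Finally, I would run the standard energy argument inside $X_0$. Since a $\gg 1/k$ proportion of the ordered pairs in $X_0\times X_0$ sum into $Y$ and $|Y|\leq 2|X|\ll k^{c}|X_0|$, Cauchy--Schwarz applied to the representation function $r_{X_0}(s):=\#\{(x,x')\in X_0\times X_0 : x+x'=s\}$ shows that the additive energy satisfies $E_+(X_0)=\sum_s r_{X_0}(s)^2\gg |X_0|^3/k^{O(1)}$. An efficient quantitative form of the Balog--Szemerédi--Gowers theorem then produces $X'\subseteq X_0$ with $|X'|\gg |X_0|/k^{O(1)}\gg k^{-29}|X|$ and $|X'-X'|\ll k^{O(1)}|X'|$, and the Plünnecke--Ruzsa inequality upgrades this to $|X'+X'|\ll k^{O(1)}|X'|$; since $X'\subseteq X_0$ we automatically have $(2\cdot X')\cap Y=\varnothing$. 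Choosing the quantitative versions of Turán's bound, Balog--Szemerédi--Gowers and Plünnecke--Ruzsa with care, and optimising the number of dyadic iterations in the previous step, one arranges the final exponents to come out as the stated $k^{-29}$ and $k^{181}$; this bookkeeping, rather than any new idea, is where the remaining work lies.
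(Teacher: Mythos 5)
Your overall plan (Tur\'an on the sum graph, then Cauchy--Schwarz to high additive energy, then Balog--Szemer\'edi--Gowers and Pl\"unnecke--Ruzsa) is the right skeleton, and you have correctly identified that the entire problem concentrates in your ``key claim'' that $X_0:=\{x\in X:2x\notin Y\}$ satisfies $|X_0|\gg k^{-O(1)}|X|$. Unfortunately, the argument you offer for that claim does not work, and I do not see how to repair it along the lines you sketch.

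The issue is that your packing argument for the key claim uses only the size hypothesis $|Y|\leq(1+k^{-29})|X|$ together with $X\subseteq Y$, and never invokes the standing assumption that $X$ has no $k$-element sum-free subset with respect to $Y$. But the size constraint alone is provably insufficient: take $X=Y=\{2^{a}:0\leq a<L\}$ with $L=\exp(k^{68})$. Then $|Y|=|X|$, yet $X_0=\{2^{L-1}\}$ is a single element. Your inequality $|Y|\geq|D^{(j)}|+(j+1)N_j$ is correct, but when $Y$ consists of few long geometric chains the quantity $N_j$ (the number of distinct odd parts met by $D^{(j)}$) is intrinsically tiny, so the inequality forces $N_j$ down rather than $|D^{(j)}|$; no contradiction with $|Y|\leq(1+k^{-29})|X|$ appears. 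Equivalently, what your counting actually shows is $|X_0|\geq P_Y-k^{-29}|X|$ where $P_Y$ is the number of $x\mapsto 2x$ chain-ends of $Y$, and $P_Y$ can equal $1$. The ``stripping a global factor of $2$'' step does not rescue this: dividing the even part of $X$ and $Y$ by $2$ leaves the set of odd parts essentially unchanged (so it does not create new $N_j$), and moreover it degrades the ratio $|Y|/|X|$ multiplicatively at each step, which after any useful number of iterations destroys the hypothesis. The key claim is indeed a true consequence of the proposition (any eligible $X'$ lies inside $X_0$), but it cannot be extracted from the size bound alone; the sum-free hypothesis has to be used, and in your write-up it never enters at this point.

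For what it is worth, the paper does not prove this proposition directly: it is stated as a variant of Shao's Proposition 6.2, with the proof attributed to \S 6 of Sudakov--Szemer\'edi--Vu and explicitly omitted. In that argument the dilate condition $(2\cdot X')\cap Y=\varnothing$ is not obtained by a free-standing counting lemma that bounds $|X_0|$ below; it emerges from a case analysis intertwined with the Balog--Szemer\'edi--Gowers step, using the approximate-group structure of the extracted set $X'$ and the bound on $|Y|$ in tandem. Reorganising your argument so that the no-sum-free-set hypothesis feeds into the chain/odd-part counting, or interleaving the extraction of $X'$ with the dilate condition as in SSV, would be the natural way forward; as written, the proof has a genuine gap precisely where you anticipated the main obstacle.
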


Proposition \ref{prop:disjoint_dilate} is a variant of 
\cite[Proposition 6.2]{shao} and its proof is contained in \cite[\S6]{sudakov-szemeredi-vu}; we omit the details. The second ingredient needed for the proof of Proposition \ref{prop:extract_sum_free} is Ruzsa's embedding lemma \cite[Lemma 5.1]{ruzsa-embedding}. Its use in the present context originates in the work of Shao \cite{shao}; a proof can be found in \cite[Lemma 5.26]{tao-vu}.

\begin{lemma}
\label{lm:ruzsa_embedding}
Let $A \subseteq \mathbb{Z}$ be a finite non-empty set and let $N > 4|2A-2A|$ be a positive integer. Then there exists a subset $A' \subseteq A$ such that $|A'| \geq |A|/2$ and $A'$ is Freiman $2$-isomorphic to a subset of $\mathbb{Z}/N\mathbb{Z}$, that is to say there exists a map $\varphi \colon A' \to \mathbb{Z}/N\mathbb{Z}$ such that
\begin{equation}\label{eq:freiman_isom}
    a_1+a_2 = a_1'+a_2' \iff \varphi(a_1)+\varphi(a_2) = \varphi(a_1')+\varphi(a_2')
\end{equation}
whenever $a_1,a_2,a_1',a_2' \in A'$.
\end{lemma}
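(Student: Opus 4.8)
The plan is to follow Ruzsa's original argument, which proceeds via a random dilation trick. First I would consider the difference set $D := 2A - 2A$. Since $|D| < N/4$ by hypothesis, for a randomly chosen $\lambda \in (\mathbb{Z}/N\mathbb{Z})^\times$ (or simply $\lambda \in \mathbb{Z}/N\mathbb{Z} \setminus \{0\}$; it suffices to avoid a bad set, so working modulo a prime $N$ is cleanest, but one reduces to that case or argues directly), the dilated set $\lambda \cdot \pi(D) \subseteq \mathbb{Z}/N\mathbb{Z}$ — where $\pi \colon \mathbb{Z} \to \mathbb{Z}/N\mathbb{Z}$ is reduction mod $N$ — tends to be ``spread out''. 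More precisely, I would estimate, for each nonzero $d \in D$, the number of $\lambda$ for which $\lambda d$ (mod $N$) lands in a short interval around $0$ of length roughly $N/(2|D|)$; by a union bound over the at most $|D|$ elements of $D$, there exists a choice of $\lambda$ such that $\lambda d \pmod N$ avoids the interval $(-N/4, N/4)$ lifted appropriately — equivalently, such that no nonzero element of $D$ dilates into a ``wraparound-dangerous'' zone.

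Next I would use an averaging/pigeonhole step to pass to the subset $A'$. Partition $A$ according to which of a bounded number of residue bands (in the lifted coordinate) the elements $\lambda \cdot \pi(a)$ fall into; choosing the band containing the most elements of $A$ yields $A' \subseteq A$ with $|A'| \ge |A|/2$ such that $\lambda \cdot \pi(A')$, lifted to its representatives in a fixed interval, has the property that $2A' - 2A'$ does not wrap around modulo $N$. The map $\varphi \colon A' \to \mathbb{Z}/N\mathbb{Z}$ is then defined by $\varphi(a) = \lambda \cdot \pi(a)$. To verify the Freiman $2$-isomorphism property \eqref{eq:freiman_isom}: the forward implication is immediate since $\varphi$ is (the restriction of) a group homomorphism composed with a dilation. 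For the reverse implication, suppose $\varphi(a_1) + \varphi(a_2) = \varphi(a_1') + \varphi(a_2')$ in $\mathbb{Z}/N\mathbb{Z}$; then $\lambda(a_1 + a_2 - a_1' - a_2') \equiv 0 \pmod N$, so $a_1 + a_2 - a_1' - a_2' \in D$ lies in the kernel of multiplication-by-$\lambda$; but we arranged that the only element of $D$ that dilates to $0$ is $0$ itself (this is exactly what avoiding the dangerous zone buys us, together with $|D| < N$), hence $a_1 + a_2 = a_1' + a_2'$ in $\mathbb{Z}$.

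I expect the main obstacle — or rather the only genuinely delicate point — to be making the ``random dilation avoids wraparound'' step quantitatively clean, in particular handling the case where $N$ is composite (so that $\lambda$ ranges over a non-field and multiplication-by-$\lambda$ need not be injective). The standard fix is to choose $\lambda$ coprime to $N$, or to first replace $N$ by a prime in the interval $(4|2A-2A|, 8|2A-2A|)$ guaranteed by Bertrand's postulate and note that a subset of $\mathbb{Z}/N'\mathbb{Z}$ for smaller $N'$ embeds into $\mathbb{Z}/N\mathbb{Z}$ for the purposes of the statement — though one must be slightly careful since the lemma asks for the specific $N$ given. In any case this is entirely routine and appears verbatim in \cite[Lemma 5.1]{ruzsa-embedding} and \cite[Lemma 5.26]{tao-vu}; since the result is quoted rather than reproved in the present paper, I would simply cite those sources and not reproduce the computation here.
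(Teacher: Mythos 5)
The paper does not reprove this lemma; as you note, it cites \cite{ruzsa-embedding} and \cite[Lemma 5.26]{tao-vu}, and you ultimately propose to do the same, so your final disposition matches the paper's. However, the sketch you offer of the underlying argument has a structural error that would not survive being written out: you reduce $A$ and $D = 2A - 2A$ modulo $N$ at the outset and then dilate by a random $\lambda \in (\mathbb{Z}/N\mathbb{Z})^\times$. Since $N$ is only required to exceed $4|D|$, this says nothing at all about the diameter of $A$, and a nonzero $d \in D$ can perfectly well be a multiple of $N$. For such a $d$ one has $\pi(d) = 0$, hence $\lambda\pi(d) = 0$ for every $\lambda$, and the reverse implication in the Freiman $2$-isomorphism condition fails for any $A'$ containing two integers differing by $d$; moreover, since those two integers already share the same image under $\pi$, no subsequent partition into ``residue bands'' can separate them. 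A concrete instance is $A = \{0,\ldots,4\} \cup \{N, \ldots, N+4\}$: here $|2A - 2A|$ is bounded independently of $N$, so the hypothesis holds once $N$ is moderately large, yet $N \in D$ reduces to $0$ modulo $N$, so the map $\varphi = \lambda\pi$ is defeated for every choice of $\lambda$.

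The actual argument in \cite[Lemma 5.26]{tao-vu} involves two distinct moduli. One first passes to $\mathbb{Z}/p\mathbb{Z}$ for a prime $p$ chosen much larger than the diameter of $A$, so that reduction is an injective, difference-preserving copy; the random dilation then takes $\lambda$ in $(\mathbb{Z}/p\mathbb{Z})^\times$, the representatives $\{0,\ldots,p-1\}$ are cut into a bounded number of arcs and a popular one is chosen by pigeonhole, and only at that final step is the selected arc mapped into $\mathbb{Z}/N\mathbb{Z}$. The prime $p$ guarantees fidelity of the ambient copy; $N$ appears only as the target of the last folding step. Your sketch has in effect collapsed both roles onto $N$, which is precisely where it breaks.
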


We are ready to deduce Proposition \ref{prop:extract_sum_free} from Theorem \ref{thm:main_result}. In fact, it will be slightly quicker to apply Theorem \ref{thm:general_groups}, though the former would of course also suffice.

\begin{proof}[Proof of Proposition \ref{prop:extract_sum_free}.]
    Assuming otherwise, Proposition \ref{prop:disjoint_dilate} provides us with a set $X' \subseteq X$ such that $|X'|\gg k^{-29}|X|$, $|X'+X'| \ll k^{181}|X'|$ and $(2\cdot X')\cap Y = \varnothing$. By the Plünnecke--Ruzsa inequality \cite[Corollary 6.29]{tao-vu}, we have $|2X'-2X'| \ll k^{724}|X'|$. Hence, by Lemma \ref{lm:ruzsa_embedding}, there exist a subset $X'' \subseteq X'$, an odd positive integer $N \ll k^{724}|X'|$ and a map $\varphi \colon X'' \to \mathbb{Z}/N\mathbb{Z}$ such that $|X''| \geq |X'|/2$ and \eqref{eq:freiman_isom} holds for all $a_1,a_2,a_1',a_2'\in X''$. In particular, $\varphi(X'')$ is a subset of $\mathbb{Z}/N\mathbb{Z}$ of density $\alpha \gg k^{-724}$ and we have
    \[N \geq |X''| \geq |X'|/2 \gg k^{-29}|X|.\]
    Provided $|X| \geq k^{29}\exp(C'k^{68}(\log k)^{16})$, where $C' > 0$ is a sufficiently large constant, Theorem \ref{thm:general_groups} implies the existence of a non-degenerate $k$-configuration in $\varphi(X'')$. By pulling back, we obtain a non-degenerate $k$-configuration in $X''$, generated by $x_1,\ldots,x_k$ say. Then for any $i,j \in [k]$ we have $x_i+x_j \in 2\cdot X'' \subseteq 2\cdot X'$ and hence $x_i+x_j \not\in Y$, so we may conclude by taking $S = \{x_1,\ldots,x_k\}$.
\end{proof}

\noindent\textbf{Acknowledgements.} This work was supported by the Croatian Science Foundation under the project number HRZZ-IP-2022-10-5116 (FANAP). The author would like to thank Rudi Mrazović for advisement and support, Zach Hunter and Rushil Raghavan for useful discussions and Tom Sanders for helpful remarks.

\appendix

\section{Auxiliary Kelley--Meka-type results}\label{app:aux_kelley_meka}

We collect here several results of Kelley--Meka-type that are not present in the literature in a form suitable for our applications. That being said, each of these can be obtained by making minor modifications to known results. We begin by recording an asymmetric variant of the Hölder lifting step of the Kelley--Meka argument.

\begin{proposition}
\label{prop:holder_lifting}
There is an absolute constant $c > 0$ such that the following holds. Let $\varepsilon \in (0,1]$ be a parameter and let $B \subseteq G$ be a regular Bohr set of rank $d$. Let $A_1, A_2 \subseteq B$ be subsets of densities at least $\alpha > 0$. Let $B' \subseteq B_{c\varepsilon\alpha/d}$ be a non-empty set and let $C \subseteq B'$ be a subset of density $\gamma > 0$. Then
\begin{enumerate}[(i)]
    \item either $|\langle \mu_{A_1} * \mu_{A_2}, \mu_C\rangle - \mu(B)^{-1}| < \varepsilon\mu(B)^{-1}$;
    \item or there exists $p \ll \cL(\gamma)$ such that $\lVert (\mu_{A_1}-\mu_B)*(\mu_{A_2}-\mu_B)\rVert_{\mathbb{L}^p(\mu_{B'})} \geq \frac{1}{2}\varepsilon\mu(B)^{-1}$.
\end{enumerate}
\end{proposition}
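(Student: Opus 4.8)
The plan is to run the standard Hölder lifting argument of Kelley and Meka, adapted to the local (Bohr set) setting, following the exposition in \cite{bloom-sisask}. First I would write $a_i := \mu_{A_i} - \mu_B$ for $i \in \{1,2\}$, so that $\mu_{A_i} = \mu_B + a_i$ and $\langle a_i, 1 \rangle = 0$. The strategy is to express the deviation $\langle \mu_{A_1}*\mu_{A_2}, \mu_C\rangle - \mu(B)^{-1}$ in terms of inner products involving the $a_i$, localise everything to the small Bohr set $B'$ using the near-invariance of $\mu_B * \mu_B$ (or $\mu_B$) under translation by elements of $B' \subseteq B_{c\varepsilon\alpha/d}$ guaranteed by regularity, and finally extract an $\mathbb{L}^p$-norm lower bound for $a_1 * a_2$ via Hölder's inequality, choosing $p \ll \cL(\gamma)$ so that $\mu(C)^{1/p} = \gamma^{1/p}\mu(B')^{1/p} \gg 1$ absorbs the density of $C$.

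In more detail, expanding the convolution gives
\[
\langle \mu_{A_1}*\mu_{A_2}, \mu_C\rangle = \langle (\mu_B + a_1)*(\mu_B+a_2), \mu_C\rangle = \langle \mu_B * \mu_B, \mu_C\rangle + \langle a_1*\mu_B, \mu_C\rangle + \langle \mu_B * a_2, \mu_C\rangle + \langle a_1 * a_2, \mu_C\rangle.
\]
The point of regularity is that for $x$ ranging over $\mathrm{supp}\,\mu_C \subseteq B' \subseteq B_{c\varepsilon\alpha/d}$, the quantities $(\mu_B * \mu_B)(x)$, $(a_1 * \mu_B)(x)$ and $(\mu_B * a_2)(x)$ are all within $O(\varepsilon\alpha)$ of their values at $x = 0$ — and at $x = 0$ one computes $(\mu_B*\mu_B)(0) = \mu(B)^{-1}$, while $(a_1*\mu_B)(0) = \mu(B)^{-1}(\mu_B(A_1) - \mu(B)) \cdot$ (a bounded factor) is likewise $O(\varepsilon\alpha \cdot \mu(B)^{-1})$ after noting $a_1$ has mean zero and is supported on $B$; more carefully one uses $\|a_i\|_1 \le \mu(B)^{-1}$-type bounds together with $A_i \subseteq B$. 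This is precisely the calculation carried out in \cite[around Lemma 16 and Proposition 18]{bloom-sisask}; the upshot is that the first three terms contribute $\mu(B)^{-1} + O(\varepsilon\mu(B)^{-1}/2)$, so that if alternative (i) fails, i.e.\ $|\langle \mu_{A_1}*\mu_{A_2}, \mu_C\rangle - \mu(B)^{-1}| \ge \varepsilon\mu(B)^{-1}$, then we must have
\[
|\langle a_1 * a_2, \mu_C\rangle| \ge \tfrac{1}{2}\varepsilon\mu(B)^{-1}.
\]
Now apply Hölder: with $q = p/(p-1)$,
\[
\tfrac{1}{2}\varepsilon\mu(B)^{-1} \le |\langle a_1*a_2, \mu_C\rangle| = |\langle a_1*a_2, \mu_{B'}\cdot(\mu(B')\mu_C)\rangle_{\mathbb{L}^2(\mu_{B'})}| \le \|a_1*a_2\|_{\mathbb{L}^p(\mu_{B'})}\,\|\mu(B')\mu_C\|_{\mathbb{L}^q(\mu_{B'})},
\]
and since $\mu(B')\mu_C = \mathbf{1}_C/\mu_{B'}(C) = \gamma^{-1}\mathbf{1}_C \cdot \mu(B')/\mu(B)$ — more simply, viewing $\mu_C$ as a probability measure relative to $B'$ with density $\gamma$ — one gets $\|\mu(B')\mu_C\|_{\mathbb{L}^q(\mu_{B'})} = \gamma^{-1}\gamma^{1/q} = \gamma^{-1/p}$. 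Choosing $p = \lceil \cL(\gamma)\rceil = \lceil\log(2/\gamma)\rceil$ makes $\gamma^{-1/p} \le 2$, hence $\|a_1*a_2\|_{\mathbb{L}^p(\mu_{B'})} \ge \tfrac14\varepsilon\mu(B)^{-1}$; taking $p$ to be the next even integer if parity is needed (as in \cite{bloom-sisask}) only changes $p$ by at most $1$ and the constant, giving the claimed bound $\ge \tfrac12\varepsilon\mu(B)^{-1}$ after adjusting the initial split to leave a slightly larger margin.

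The main obstacle, such as it is, lies not in any single estimate but in the bookkeeping of the regularity perturbations: one has to verify that every term other than $\langle a_1*a_2,\mu_C\rangle$ is genuinely negligible at scale $\varepsilon\mu(B)^{-1}$, which requires the hypothesis $B' \subseteq B_{c\varepsilon\alpha/d}$ with $c$ small enough and uses crucially that $A_1, A_2 \subseteq B$ so that $a_i$ is supported on $B$ with $\|a_i\|_\infty \le \mu(B)^{-1}$ and mean zero. Concretely, $|(a_1*\mu_B)(x) - (a_1*\mu_B)(0)| \le \|a_1\|_1 \sup_{t \in B'}\|\mu_B - \tau_t\mu_B\|_\infty \cdot |B|$-type manipulations, or more cleanly the bound from \cite[Lemma 16]{bloom-sisask} on $\|\mu_{A_1}*\mu_B - \mu_{A_1}*\tau_t\mu_B\|$, reduce this to the standard regularity inequality $\mu(B_{1+\kappa}\setminus B_{1-\kappa}) \le O(\kappa d)\mu(B)$. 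All of this is routine once set up; I would cite \cite[Proposition 18]{bloom-sisask} for the template and simply indicate the asymmetric modification ($A_1 \ne A_2$, and $C$ of density $\gamma$ inside $B'$ rather than $B'$ itself), which affects only the Hölder exponent and the constants.
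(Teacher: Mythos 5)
Your proposal follows the same route as the paper, which proves the proposition simply by observing that it is \cite[Proposition 20]{bloom-sisask} with $A_1\ne A_2$ allowed and $C$ permitted to be a dense subset of $B'$ rather than $B'$ itself: expand $\mu_{A_i}=\mu_B+a_i$, use regularity on the scale $B'\subseteq B_{c\varepsilon\alpha/d}$ to show the cross-terms and $\langle\mu_B*\mu_B,\mu_C\rangle$ are within $O(\varepsilon)\mu(B)^{-1}$ of $\mu(B)^{-1}$, then apply H\"older with $p\asymp\cL(\gamma)$ to absorb the density of $C$. Two small slips worth noting but not fatal: the displayed regularity estimate $\lVert a_1\rVert_1\sup_t\lVert\mu_B-\tau_t\mu_B\rVert_\infty\cdot|B|$ is not the right shape (you want $\lVert a_1\rVert_\infty\lVert\tau_x\mu_B-\mu_B\rVert_1\le\mu(A_1)^{-1}\cdot O(\delta d)$, which is where the $\alpha$ in the hypothesis $B'\subseteq B_{c\varepsilon\alpha/d}$ gets used); and $p=\lceil\log(2/\gamma)\rceil$ does not make $\gamma^{-1/p}\le 2$ for small $\gamma$ (one needs $p\ge\log_2(1/\gamma)$, so take a slightly larger constant multiple of $\cL(\gamma)$). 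You also cite \cite[Proposition 18]{bloom-sisask}, which in that paper is the unbalancing step; the correct reference for this argument is \cite[Proposition 20]{bloom-sisask}.
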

\begin{proof}
    The proof is the same as for \cite[Proposition 20]{bloom-sisask}. The result we are referring to deals only with the case $A_1 = A_2$, but the proof carries over to the general setting without any difficulty.
\end{proof}

The following lemma encodes a simple averaging argument which is used repeatedly in the paper.

\begin{lemma}
\label{lm:averaging_argument}
Let $f \colon G \to \mathbb{C}$ and $p \in [1,\infty)$. Suppose that $\mu,\nu,\eta$ are probability measures on $G$ such that $\mu \leq \gamma(\eta*\nu)$ for some $\gamma > 0$. Then there exists $x \in G$ such that
\[\lVert f\rVert_{\mathbb{L}^p(\tau_x\nu)} \geq \gamma^{-1/p}\lVert f\rVert_{\mathbb{L}^p(\mu)}.\]
\end{lemma}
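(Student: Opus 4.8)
The statement is a clean averaging/pigeonhole fact, so the plan is to expand the hypothesis $\mu \leq \gamma(\eta * \nu)$ pointwise and push it through the definition of the $\mathbb{L}^p(\mu)$-norm. First I would write
\[
\lVert f\rVert_{\mathbb{L}^p(\mu)}^p = \E_{y\in G}|f(y)|^p\mu(y) \leq \gamma\,\E_{y\in G}|f(y)|^p(\eta*\nu)(y),
\]
using nonnegativity of $|f|^p$ together with the pointwise bound on $\mu$. The next step is to unfold the convolution: $(\eta*\nu)(y) = \E_{x\in G}\eta(x)\nu(y-x)$, and then swap the order of the (finite) expectations over $x$ and $y$. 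After the swap, the inner expectation over $y$ is exactly $\E_{y\in G}|f(y)|^p\nu(y-x)$, which by the change of variables $y \mapsto y+x$ (valid since $G$ is a group and we are averaging uniformly) equals $\E_{y\in G}|f(y+x)|^p\nu(y) = \lVert \tau_{-x}f\rVert_{\mathbb{L}^p(\nu)}^p$, or equivalently $\lVert f\rVert_{\mathbb{L}^p(\tau_x\nu)}^p$ depending on the paper's convention for the translate $\tau_x\nu$ of a measure. I would match the convention so that $(\tau_x\nu)(z) = \nu(z-x)$, so that $\E_{y}|f(y)|^p(\tau_x\nu)(y) = \E_y |f(y)|^p\nu(y-x)$, which is the quantity produced by the unfolding; no extra sign-flip is then needed.

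At that point we have
\[
\lVert f\rVert_{\mathbb{L}^p(\mu)}^p \leq \gamma\,\E_{x\in G}\eta(x)\lVert f\rVert_{\mathbb{L}^p(\tau_x\nu)}^p,
\]
and since $\eta$ is a probability measure the right-hand side is a weighted average (with weights $\eta(x)$) of the quantities $\lVert f\rVert_{\mathbb{L}^p(\tau_x\nu)}^p$. The standard pigeonhole principle for weighted averages then gives some $x$ in the support of $\eta$ with $\lVert f\rVert_{\mathbb{L}^p(\tau_x\nu)}^p \geq \gamma^{-1}\lVert f\rVert_{\mathbb{L}^p(\mu)}^p$; taking $p$-th roots yields the claim. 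One small point to be careful about: if $\lVert f\rVert_{\mathbb{L}^p(\mu)} = 0$ the statement is vacuous, so we may assume it is positive, which also ensures the average on the right is positive and the pigeonhole step is non-degenerate.

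There is essentially no obstacle here — the only thing requiring any care is bookkeeping the translation conventions ($\tau_x\nu$ versus $\tau_{-x}\nu$, and whether $G$ being written additively forces $f\mapsto f(\cdot + x)$ or $f(\cdot - x)$), and making sure the Fubini swap and change of variables are applied with the uniform averaging $\E_{y\in G}$ rather than unnormalised sums so that the probability-measure normalisations are preserved throughout. Since everything is a finite sum, all the interchanges are trivially justified. I would present the whole argument as a short chain of (in)equalities culminating in the pigeonhole step, taking perhaps five or six lines.
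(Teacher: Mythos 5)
Your proof is correct and is essentially the same as the paper's: the paper compresses the Fubini/change-of-variables step into the adjoint identity $\langle|f|^p,\eta*\nu\rangle = \langle|f|^p\circ\nu,\eta\rangle$ and then pigeonholes over the support of $\eta$, which is exactly what your unfolding of the convolution does explicitly. Your translation convention $(\tau_x\nu)(z)=\nu(z-x)$ matches the paper's, under which $\lVert f\rVert_{\mathbb{L}^p(\tau_x\nu)}^p=(|f|^p\circ\nu)(x)$ as used there.
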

\begin{proof}
    Observe that
    \[\lVert f\rVert_{\mathbb{L}^p(\mu)}^p = \langle|f|^p,\mu\rangle \leq \gamma\langle|f|^p,\eta*\nu\rangle = \gamma\langle|f|^p\circ\nu,\eta\rangle,\]
    so by averaging, we obtain an $x \in G$ such that
    \[\lVert f\rVert_{\mathbb{L}^p(\tau_x\nu)} = (|f|^p\circ\nu)(x)^{1/p} \geq \gamma^{-1/p}\lVert f\rVert_{\mathbb{L}^p(\mu)},\]
    as desired.
\end{proof}

We next give a version of the dependent random choice argument underpinning the sifting step of the Kelley--Meka proof of Roth's theorem. In essence, it can be obtained by combining the proofs of \cite[Lemma 4.9]{kelley-meka} and \cite[Lemma 8]{bloom-sisask} taken together with the remarks following its statement. We need the former for two reasons: it allows us to take different sets $A_1$, $A_2$ as input to the sifting process and produces better bounds for the densities of the outputted sets $A_1', A_2'$ (see Remark \ref{rem:alternative_sifting}). The latter is required in order to obtain a local variant of the result. For the convenience of the reader, we include a proof.

\begin{lemma}
\label{lm:asymmetric_sifting}
Let $\varepsilon,\delta \in (0,1]$ and let $p \geq \varepsilon^{-1}\cL(\delta)$ be an integer. Let $B_1, B_2 \subseteq G$ be non-empty subsets and let $A_1, A_2 \subseteq G$ be subsets of respective densities $\alpha_1,\alpha_2 > 0$. Let $\mu = \mu_{B_1}\circ\mu_{B_2}$ and
\[S = \{x \in G \mid (\mu_{A_1}\circ\mu_{A_2})(x) \geq (1-\varepsilon)\lVert \mu_{A_1}\circ\mu_{A_2}\rVert_{\mathbb{L}^p(\mu)}\}.\]
Then for $j \in [2]$ there exists a subset $A_j' \subseteq B_j$ of density at least $\frac{1}{4}(\alpha_j\lVert \mu_{A_1}\circ\mu_{A_2}\rVert_{\mathbb{L}^p(\mu)})^p$ such that
\[\langle \mu_{A_1'}\circ\mu_{A_2'},1_S\rangle \geq 1-\delta.\]
\end{lemma}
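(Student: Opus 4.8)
The plan is to run the dependent random choice argument in the local (Bohr set) setting, following \cite[Lemma 4.9]{kelley-meka} for the asymmetric structure and \cite[Lemma 8]{bloom-sisask} for the passage to the convolution measure $\mu = \mu_{B_1}\circ\mu_{B_2}$. Write $f = \mu_{A_1}\circ\mu_{A_2}$ and $M = \lVert f\rVert_{\mathbb{L}^p(\mu)}$, so that $S = \{x : f(x) \geq (1-\varepsilon)M\}$. The key observation is the pointwise identity $f(x) = \E_{t\in G}\,\mu_{A_1}(x+t)\mu_{A_2}(t)$, which expresses $f$ as an average over translates; unwinding $\lVert f\rVert_{\mathbb{L}^p(\mu)}^p = \E_{x\in G}\,f(x)^p\,\mu(x)$ and expanding the $p$-th power, one sees that $M^p$ is a weighted count of configurations involving $p$-tuples of points from $A_1$ (resp.\ from $A_2$) together with the Bohr-set weight coming from $\mu = \mu_{B_1}\circ\mu_{B_2}$. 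Concretely, I would sample a random point according to the natural probability distribution on $p$-tuples whose weight is proportional to the summand in the expansion of $M^p$, and let $A_j'$ be the (random) set of $j$-th coordinates landing in $B_j$.

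First I would set up the random model: expand
\[
M^p = \E_{x\in G}\,\mu(x)\Bigl(\E_{t\in G}\mu_{A_1}(x+t)\mu_{A_2}(t)\Bigr)^p,
\]
and reorganise this into a sum over a point $y$ (the "seed", distributed according to $\mu_{B_1}\circ\mu_{B_2}$-type weights) and two $p$-tuples $\mathbf{a}\in A_1^p$, $\mathbf{b}\in A_2^p$ subject to linear constraints tying them to $y$. Sampling $(\,y,\mathbf{a},\mathbf{b}\,)$ with probability proportional to the corresponding weight, define $A_1' = \{a_i : a_i \in B_1\}$ and $A_2' = \{b_i : b_i \in B_1\}$ — here one uses that $A_1\circ A_2$-type averages force the relevant coordinates into $B_1$ up to the regularity slack, which is exactly where $\mu = \mu_{B_1}\circ\mu_{B_2}$ (rather than a single Bohr set) is needed; this is the local ingredient from \cite[Lemma 8]{bloom-sisask}.

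The two things to verify are the density lower bound and the "covering" estimate $\langle \mu_{A_1'}\circ\mu_{A_2'},1_S\rangle \geq 1-\delta$. For the density: the expected value of $\E_{x}\mu_{A_1'}(x)$ (resp.\ with $A_2'$) is, by construction, at least $(\alpha_j M)^p / (\text{normalisation})$, and a standard second-moment / Markov argument — restricting to the event that $|A_j'|$ is not too small while the bad mass is controlled — yields a realisation with $\mu(A_j') \geq \tfrac14(\alpha_j M)^p$; the factor $\tfrac14$ absorbs the two simultaneous Markov steps (one per $j\in[2]$). For the covering estimate: a point $x$ with $f(x) < (1-\varepsilon)M$ is "thin", and the probability that such a thin $x$ is realised as a difference $a_i - b_i$ in the random sample is, by the dependent-random-choice weighting, at most $\bigl((1-\varepsilon)M/M\bigr)^p \le (1-\varepsilon)^p \le \delta$ using $p \geq \varepsilon^{-1}\cL(\delta) \ge \varepsilon^{-1}\log(1/\delta)$; summing over thin $x$ and comparing against the normalisation gives $\E\,\langle\mu_{A_1'}\circ\mu_{A_2'},1_{G\setminus S}\rangle \le \delta$ in expectation, and combining with the density event (say via a union bound over the three bad events) produces a single good realisation. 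The main obstacle I anticipate is bookkeeping the normalisation so that all three events (two density bounds, one covering bound) hold simultaneously with positive probability, and handling the Bohr-set regularity losses in the step that forces coordinates into $B_1$ — but since $B_1, B_2$ enter only through $\mu_{B_1}\circ\mu_{B_2}$ with no width hypothesis imposed, these losses are benign and absorbed into the stated constant $\tfrac14$.
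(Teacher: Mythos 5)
Your high-level intuitions are right (dependent random choice, the $(1-\varepsilon)^p$ factor suppressing ``thin'' points, the Kelley--Meka asymmetry), but the random model you describe does not produce the objects the lemma requires, and this is a genuine gap rather than a stylistic difference.

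You propose to sample a single seed $y$ together with $p$-tuples $\mathbf{a}\in A_1^p$, $\mathbf{b}\in A_2^p$, and then set $A_j' = \{a_i : a_i \in B_j\}$ (you wrote $B_1$ for both, but even correcting that typo the issue persists). Such a set has at most $p$ elements, so its relative density in $B_j$ is at most $p/|B_j|$ --- typically astronomically smaller than the required bound $\tfrac14(\alpha_j\lVert \mu_{A_1}\circ\mu_{A_2}\rVert_{\mathbb{L}^p(\mu)})^p$, which is a fixed positive constant depending on $\alpha_j$, $p$, and not on $|G|$. The random model in the paper is dual to yours: one samples $p$ random \emph{shifts} $t_1,\ldots,t_p$ uniformly from $G^p$ and sets $A_j' := B_j \cap \bigcap_{k=1}^p(A_j - t_k)$, so that $A_j'$ is the (typically large) set of points of $B_j$ that survive $p$ random sifting steps. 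With this set-up one computes $\E_t\,(1_{A_1'}\circ 1_{A_2'})(x) = (1_{A_1}\circ 1_{A_2})(x)^p\,(1_{B_1}\circ 1_{B_2})(x)$ by simple independence; the $p$-th power appears automatically and gives both $\E_t[\alpha_1'\alpha_2'] = \lVert 1_{A_1}\circ 1_{A_2}\rVert_{\mathbb{L}^p(\mu)}^p$ and the suppression of the mass outside $S$. The three simultaneous requirements are then enforced not by a second-moment argument but by a union bound on the event $E$ that both densities are at least a quarter of their ``expected'' value: one shows $\E_t[\alpha_1'\alpha_2'\,1_{E^c}] \leq \tfrac12\lVert 1_{A_1}\circ 1_{A_2}\rVert_{\mathbb{L}^p(\mu)}^p$ and compares against $\E_t\langle 1_{A_1'}\circ 1_{A_2'},1_{S^c}\rangle$.

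A secondary (and related) misreading: you anticipate ``Bohr-set regularity losses'' in passing to $\mu = \mu_{B_1}\circ\mu_{B_2}$, but the lemma imposes no structure on $B_1, B_2$ at all --- they are arbitrary non-empty sets, and the Bohr-set machinery enters only later, when the lemma is invoked inside Proposition \ref{prop:lp_norm_to_density_increment}. There is no regularity slack to absorb here; the factor $\tfrac14$ comes purely from the union bound just described.
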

\begin{proof}
    For each $j \in [2]$ consider
    \[A_j' \vcentcolon= B_j \cap \bigcap_{k=1}^{p}(A_j-t_k),\]
    where $t = (t_1,\ldots,t_p)$ is a $p$-tuple chosen uniformly at random from $G^p$. Write $\alpha_j' \vcentcolon= \mu_{B_j}(A_j')$ for the relative density of $A_j'$ in $B_j$. Then it is straightforward to see using linearity of expectation that
    \[\E_{t\in G^p}\alpha_j' = \alpha_j^p.\]
    A similar calculation reveals that for any $x \in G$ we have
    \[\E_{t\in G^p}(1_{A_1'}\circ1_{A_2'})(x) = (1_{A_1}\circ1_{A_2})(x)^p(1_{B_1}\circ1_{B_2})(x).\]
    Thus, for any weight function $f \colon G \to [0,\infty)$, linearity of expectation implies that
    \begin{equation}\label{eq:expected_inner_product}
        \E_{t\in G^p}\langle 1_{A_1'}\circ1_{A_2'}, f\rangle = \mu(B_1)\mu(B_2)\langle (1_{A_1}\circ1_{A_2})^p, f\rangle_{\mathbb{L}^2(\mu)}.
    \end{equation}
    In particular, taking $f = 1$ in the above equation, it follows that
    \[\E_{t\in G^p}\alpha_1'\alpha_2' = \lVert 1_{A_1}\circ1_{A_2}\rVert_{\mathbb{L}^p(\mu)}^p.\]
    Now consider the event
    \begin{equation}\label{eq:def_of_event}
        E \vcentcolon= \Bigl\{\alpha_j' \geq \frac{1}{4}(\alpha_j\lVert \mu_{A_1}\circ\mu_{A_2}\rVert_{\mathbb{L}^p(\mu)})^p \text{ for all } j \in [2]\Bigr\}
    \end{equation}
    and observe that
    \begin{align*}
        \E_{t\in G^p}\alpha_1'\alpha_2'1_{E^c} &\leq \frac{1}{4}(\alpha_1\lVert \mu_{A_1}\circ\mu_{A_2}\rVert_{\mathbb{L}^p(\mu)})^p\E_{t\in G^p}\alpha_2' + \frac{1}{4}(\alpha_2\lVert \mu_{A_1}\circ\mu_{A_2}\rVert_{\mathbb{L}^p(\mu)})^p\E_{t\in G^p}\alpha_1'\\
        &= \frac{1}{2}(\alpha_1\alpha_2\lVert \mu_{A_1}\circ\mu_{A_2}\rVert_{\mathbb{L}^p(\mu)})^p\\
        &= \frac{1}{2}\lVert 1_{A_1}\circ1_{A_2}\rVert_{\mathbb{L}^p(\mu)}^p.
    \end{align*}
    Therefore, we have
    \[\E_{t\in G^p}\alpha_1'\alpha_2'1_E \geq \frac{1}{2}\lVert 1_{A_1}\circ1_{A_2}\rVert_{\mathbb{L}^p(\mu)}^p.\]
    The condition $p \geq \varepsilon^{-1}\cL(\delta)$ guarantees that $(1-\varepsilon)^p \leq \delta/2$, so taking $f = 1_{S^c}$ in \eqref{eq:expected_inner_product}, we get
    \[\E_{t\in G^p}\langle 1_{A_1'}\circ1_{A_2'},1_{S^c}\rangle < \mu(B_1)\mu(B_2)\cdot(1-\varepsilon)^p\lVert 1_{A_1}\circ1_{A_2}\rVert_{\mathbb{L}^p(\mu)}^p \leq \delta\mu(B_1)\mu(B_2)\E_{t\in G^p}\alpha_1'\alpha_2'1_E.\]
    Hence, by averaging, we obtain a realisation of $A_1',A_2'$ on $E$ such that
    \[\langle 1_{A_1'}\circ1_{A_2'},1_{S^c}\rangle < \delta\mu(B_1)\mu(B_2)\alpha_1'\alpha_2' = \delta\mu(A_1')\mu(A_2').\]
    For this choice of $A_1', A_2'$, we have the required lower bound on the densities, as well as
    \[\langle \mu_{A_1'}\circ\mu_{A_2'},1_S\rangle = 1-\langle \mu_{A_1'}\circ\mu_{A_2'},1_S^c\rangle \geq 1-\delta.\]
    This concludes the proof.
\end{proof}

\begin{remark}
\label{rem:alternative_sifting}
The choice \eqref{eq:def_of_event} of the event $E$ is based on the argument from the proof of \cite[Lemma 4.7]{kelley-meka}. We could have alternatively followed the proof of \cite[Lemma 10]{bloom-sisask} by considering
\[E \vcentcolon= \{\alpha_1'\alpha_2' \geq L\}\]
with the choice $L = \frac{1}{4}(\alpha_1\alpha_2)^p\lVert \mu_{A_1}\circ\mu_{A_2}\rVert_{\mathbb{L}^p(\mu)}^{2p}$, and using the Cauchy--Schwarz inequality to bound
\[\E_{t\in G^p}\alpha_1'\alpha_2'1_{E^c} \leq L^{1/2}\Bigl(\E_{t\in G^p}\alpha_1'\Bigr)^{1/2}\Bigl(\E_{t\in G^p}\alpha_2'\Bigr)^{1/2} = \frac{1}{2}\lVert 1_{A_1}\circ1_{A_2}\rVert_{\mathbb{L}^p(\mu)}^p.\]
This would ultimately lead to the same conclusion, but with the lower bound $\alpha_j' \geq L$ on the relative density of $A_j'$ in $B_j$. In the case of three-term progressions, the sets $A_1$ and $A_2$ are the same, so this would make no difference. In our case, however, this bound is weaker. Indeed, in our application, $\alpha_1$ and $\alpha_2$ are roughly proportional to $\exp(-O(\cL(\alpha)))$ and $\exp(-O(k^6\cL(\alpha)^2))$ respectively. Hence, we would get an upper bound of the form $O(p^2k^{12}\cL(\alpha)^4)$ for the product $\cL(\alpha_1')\cL(\alpha_2')$ occurring in Theorem \ref{thm:almost_periodicity}. Our argument instead delivers $O(p^2k^6\cL(\alpha)^3)$, thereby saving a factor of $k^6\cL(\alpha)$.
\end{remark}

The following almost-periodicity result is essentially \cite[Theorem 3.6]{filmus-hatami-hosseini-kelman}. In particular, it can be regarded as an asymmetric variant of \cite[Lemma 8]{bloom-sisask-improvement}. However, in contrast to these results, we use separate parameters to keep track of the densities of the input sets $A_1,A_2$, which is worthwhile in view of Remark \ref{rem:alternative_sifting}. We also spell out the dependencies of the rank and width of $B'$ on $\varepsilon$ as well as the (unsurprising) fact that its frequency set extends that of $B^{(2)}$. As in the case of Lemma \ref{lm:asymmetric_sifting}, we present a full proof.

\begin{theorem}
\label{thm:almost_periodicity}
There is an absolute constant $c > 0$ such that the following holds. Let $B \subseteq G$ be a non-empty set and let $A_1,A_2 \subseteq B$ be subsets of densities $\alpha_1,\alpha_2 > 0$ respectively. Let $B^{(1)}$ and $B^{(2)} = \mathrm{Bohr}(\Gamma;\rho)$ be regular Bohr sets of rank $d$ in $G$ such that $B^{(2)} \subseteq B^{(1)}_{c/d}$. Let $A_1' \subseteq B^{(1)}$, $A_2' \subseteq B^{(2)}$ be subsets of densities $\alpha_1',\alpha_2' > 0$ respectively. Let $\varepsilon \in (0,1]$ and let $S \subseteq B^{(1)} - B^{(2)}$ be a set such that
\begin{enumerate}[(i)]
    \item $\langle \mu_{A_1'}\circ\mu_{A_2'}, 1_S\rangle \geq 1-\frac{\varepsilon}{16}$;
    \item $(\mu_{A_1}\circ\mu_{A_2})(x) \geq \Bigl(1+\frac{3}{4}\varepsilon\Bigr)\mu(B)^{-1}$ for all $x \in S$.
\end{enumerate}
Then there exist $\Delta \subseteq \widehat{G}$ of size $d' \ll \varepsilon^{-2}\cL(\varepsilon(\alpha_1\alpha_2)^{1/2})^2\cL(\alpha_1')\cL(\alpha_2')$ and $\rho' \gg \rho\varepsilon(\alpha_1\alpha_2)^{1/2}/(d^3d')$ such that $B' = \mathrm{Bohr}(\Gamma \cup \Delta;\rho')$ is a regular Bohr set with the property that
\[\lVert \mu_{A_1}*\mu_{B'}\rVert_{\infty} \geq \Bigl(1+\frac{\varepsilon}{2}\Bigr)\mu(B)^{-1}.\]
\end{theorem}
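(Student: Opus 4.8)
The plan is to follow the standard Kelley--Meka almost-periodicity pipeline as exposited in \cite{bloom-sisask}, adapted to the asymmetric Bohr-set setting, so I will only sketch the main moves. The starting point is to unfold the hypothesis. Hypothesis (i) says that $\mu_{A_1'}\circ\mu_{A_2'}$ is almost entirely concentrated on $S$, and hypothesis (ii) says that on $S$ the function $\mu_{A_1}\circ\mu_{A_2}$ is pointwise at least $(1+\tfrac34\varepsilon)\mu(B)^{-1}$. The goal is to produce a Bohr set $B'$ on which some translate of $A_1$ has density at least $(1+\tfrac\varepsilon2)\mu(B)^{-1}\mu(B)=\ldots$ — more precisely $\lVert\mu_{A_1}*\mu_{B'}\rVert_\infty\ge(1+\tfrac\varepsilon2)\mu(B)^{-1}$. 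The key observation is that $(\mu_{A_1}\circ\mu_{A_2})(x)=\langle\mu_{A_1},\tau_x\mu_{A_2}\rangle$, so the $\ell^2$-almost-periodicity of convolutions (Croot--Sisask) applied to the function $1_{A_1}$ with respect to translation by elements of the (dense inside $B^{(2)}$) set $A_2'$ will let us find a large Bohr set $B'$ of translations $t$ under which $\mu_{A_1}*\mu_{A_2'}$ barely moves in $\mathbb{L}^{2m}$-norm, for a suitable even $m\asymp\varepsilon^{-2}\cL(\varepsilon(\alpha_1\alpha_2)^{1/2})$.

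The main steps, in order, are as follows. First I would set $m$ to be an even integer of size $\asymp\varepsilon^{-2}\cL(\varepsilon(\alpha_1\alpha_2)^{1/2})$ and invoke the local Croot--Sisask almost-periodicity lemma (in the form given in \cite[Lemma 8]{bloom-sisask-improvement} or the relevant statement in \cite{bloom-sisask}) to the pair $(1_{A_1},B^{(2)})$: this yields a set $\Delta\subseteq\widehat G$ with $|\Delta|=d'\ll m^2\cL(\alpha_1')\ll\varepsilon^{-2}\cL(\varepsilon(\alpha_1\alpha_2)^{1/2})^2\cL(\alpha_1')$ and a radius $\rho'\gg\rho\varepsilon(\alpha_1\alpha_2)^{1/2}/(d^3d')$ such that, writing $B'=\mathrm{Bohr}(\Gamma\cup\Delta;\rho')$ (made regular by the usual pigeonhole on the radius, Lemma \ref{lm:regular_dilate}), we have
\[\lVert \mu_{A_1}*\mu_{A_2'}*\mu_{B'} - \mu_{A_1}*\mu_{A_2'}\rVert_{\mathbb{L}^{2m}(\cdot)}\le (\text{small})\cdot\mu(B)^{-1}\]
after the appropriate normalisation (the $\cL(\alpha_2')$ factor in $d'$ should come in from also regularising/sifting on the $A_2'$ side — I would symmetrise the roles of $A_1',A_2'$ in the almost-periodicity input so that both densities appear). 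Second, combining this $\mathbb{L}^{2m}$-closeness with hypothesis (i) (so that the mass of $\mu_{A_1'}\circ\mu_{A_2'}$ living off $S$ is negligible) and hypothesis (ii) (the pointwise lower bound on $S$), a Hölder/averaging argument over the translates $t$ in the support of $\mu_{B'}$ shows that for \emph{some} such $t$, and in fact on a positive-density subset, $\mu_{A_1}*\mu_{A_2'}(x-t)$ inherits the lower bound $(1+\tfrac34\varepsilon-o(\varepsilon))\mu(B)^{-1}$ on a set of $x$ that is not too sparse. Third, since $\mu_{A_2'}$ is supported inside a small dilate of $B^{(1)}$ (using $B^{(2)}\subseteq B^{(1)}_{c/d}$ and regularity), convolving with $\mu_{A_2'}$ and then with $\mu_{B'}$ only blurs things on a scale negligible compared to the deviation, so this pointwise lower bound transfers to $\mu_{A_1}*\mu_{B'}$, giving $\lVert\mu_{A_1}*\mu_{B'}\rVert_\infty\ge(1+\tfrac\varepsilon2)\mu(B)^{-1}$ after absorbing the various $O(\varepsilon)$ losses into the gap between $\tfrac34\varepsilon$ and $\tfrac12\varepsilon$.

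Concretely, I would structure the write-up as: (a) choose $m$ and apply almost-periodicity to get $B'$ with the stated $d',\rho'$; (b) use (i) to replace $1_S$ by $\mu_{A_1'}\circ\mu_{A_2'}$ at the cost of $\varepsilon/16$, then use (ii) and the $\mathbb{L}^{2m}$-closeness together with the Cauchy--Schwarz/Hölder bound $\langle g, \mu_{A_1'}\circ\mu_{A_2'}\rangle \le \lVert g\rVert_{\mathbb{L}^{2m}} \cdot\lVert\mu_{A_1'}\circ\mu_{A_2'}\rVert_{\mathbb{L}^{(2m)'}}$-type estimate (with the second factor controlled because $\mu_{A_1'}\circ\mu_{A_2'}$ is a probability measure and $2m$ is large relative to $\cL$ of the densities) to deduce the existence of a good shift; (c) dilate-and-regularise $B'$ once more if necessary to pass from $\mu_{A_1}*\mu_{A_2'}*\mu_{B'}$ to $\mu_{A_1}*\mu_{B'}$, using Lemma \ref{lm:domination_by_convolution} and Lemma \ref{lm:averaging_argument} as in Proposition \ref{prop:few_solutions_to_lp_norm}.

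The main obstacle I anticipate is bookkeeping the quantitative dependencies so that $d'$ comes out with exactly the claimed shape $\varepsilon^{-2}\cL(\varepsilon(\alpha_1\alpha_2)^{1/2})^2\cL(\alpha_1')\cL(\alpha_2')$ — in particular getting \emph{both} $\cL(\alpha_1')$ and $\cL(\alpha_2')$ (rather than one of them, or a product $\cL(\alpha_1'\alpha_2')$) requires being careful about which set plays the role of the "function" and which the "translating set" in the Croot--Sisask step, and possibly running the argument so that one sifts/regularises on both sides; this is precisely the point flagged in Remark \ref{rem:alternative_sifting}. The other delicate point is ensuring the radius $\rho'$ loses only the stated $d^3d'$ and $\varepsilon(\alpha_1\alpha_2)^{1/2}$ factors, which means tracking the regularity losses (each application of Lemma \ref{lm:regular_dilate} costs a factor $\asymp 1/d$, and one needs $B^{(2)}\subseteq B^{(1)}_{c/d}$ to keep the blurring-scale comparisons valid). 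Everything else is routine once the parameters are pinned down.
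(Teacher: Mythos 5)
Your proposal is in the right spirit --- both you and the paper run a Croot--Sisask/Schoen--Sisask almost-periodicity step and then unbalance using hypotheses (i) and (ii) --- but there are two genuine gaps, and the first is one you flag yourself without resolving. Getting \emph{both} $\cL(\alpha_1')$ and $\cL(\alpha_2')$ as separate factors in $d'$ is precisely the point of stating this asymmetric variant, and it is not achieved by applying Croot--Sisask to ``the pair $(1_{A_1},B^{(2)})$'' as you propose: that setup either produces a factor depending on the density of the translating set alone or, if you are not careful, on the coarser product $\cL(\alpha_1'\alpha_2')$. The paper resolves this by applying \cite[Theorem 5.1]{schoen-sisask} with $-A_2'$, $A_1'$, $-S$ in three distinct roles: $-A_2'$ supplies the translating set with small doubling constant $K = 2\alpha_2'^{-1}$ inside translates of a narrow regular dilate $B^{(3)} = B^{(2)}_{\lambda}$, while $A_1'$ supplies the reference measure whose density parameter $\eta \gg \alpha_1'$ enters separately; the product $\cL(\eta)\cL(K^{-1})$ is what yields $\cL(\alpha_1')\cL(\alpha_2')$. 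Your suggestion to ``symmetrise the roles'' gestures at this but does not give a mechanism, and your assignment of $\cL(\alpha_1')$ to the almost-periodicity of $1_{A_1}$ translated by $B^{(2)}$ actually has the roles backwards.

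The second gap is step (c) of your outline: you want to pass from closeness of $\mu_{A_1}*\mu_{A_2'}$ to a bound on $\lVert\mu_{A_1}*\mu_{B'}\rVert_\infty$ by arguing that convolution by $\mu_{A_2'}$ ``only blurs things on a negligible scale,'' but a pointwise lower bound on $\mu_{A_1}*\mu_{A_2'}$ does not transfer to a pointwise lower bound on $\mu_{A_1}*\mu_{B'}$ without further argument, and this is not how the paper closes. Instead, after the almost-periodicity step, the paper extracts the Bohr set $B'$ via the Chang--Sanders large-spectrum lemma \cite[Proposition 5.3]{schoen-sisask} (not merely by pigeonholing the radius), then does a Fourier-analytic calculation to propagate the approximation $\mu_X^{(k)}*F$ across $B'$, and finishes by one application of the adjoint identity together with Hölder's inequality, $\langle \mu_X^{(k)}*\mu_{A_1'}\circ\mu_{A_2'}*\mu_{A_2},\mu_{A_1}*\mu_{B'}\rangle \leq \lVert\cdot\rVert_1\lVert\mu_{A_1}*\mu_{B'}\rVert_\infty$. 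So the final step is an adjointness/duality argument, not a blurring argument. There is also a minor arithmetic slip: with $m \asymp \varepsilon^{-2}\cL(\cdot)$ as you set it, $m^2\cL(\alpha_1')$ gives $\varepsilon^{-4}$, not the claimed $\varepsilon^{-2}$; the correct choice (and the paper's) has the squared exponent coming from the Chang step rather than from squaring $m$.
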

\begin{proof}
    We start by using Lemma \ref{lm:regular_dilate} to select $\lambda \in [c/(2d),c/d]$ such that $B^{(3)} \vcentcolon= B^{(2)}_{\lambda}$ is a regular Bohr set. In particular, by Remark \ref{rem:regular_bohr_sets}, we have
    \[|{-A_2'} + B^{(3)}| \leq |{-B^{(2)}} + B^{(3)}| \leq 2|{-B^{(2)}}| = K|{-A_2'}|.\]
    for $K = 2\alpha_2'^{-1}$. Similarly, since
    \[|{-S}| \leq |B^{(1)}-B^{(2)}| \leq 2|B^{(1)}| = 2\alpha_1'^{-1}|A_1'|,\]
    it follows that $\eta \vcentcolon= \min(|A_1'|/|{-S}|,1) \geq \alpha_1'/2$. By \cite[Theorem 5.1]{schoen-sisask} applied with $-A_2',A_1',-S,B^{(3)}$ and $\varepsilon/16$ in place of $A,M,L,S$ and $\varepsilon$ respectively, we obtain a subset $X$ of a translate of  $B^{(3)}$ of density at least $\exp(-O(\varepsilon^{-2}k^2\cL(\eta)\cL(K^{-1})))$ such that
    \[\lVert \mu_X^{(k)}*\mu_{A_1'}\circ\mu_{A_2'}\circ1_S - \mu_{A_1'}\circ\mu_{A_2'}\circ1_S\rVert_{\infty} \leq \frac{\varepsilon}{16}.\]
    In particular, by (i), we have
    \[\langle \mu_X^{(k)}*\mu_{A_1'}\circ\mu_{A_2'}, 1_S\rangle \geq 1-\frac{\varepsilon}{8}\]
    and hence (ii) implies that
    \begin{equation}\label{eq:large_scalar_product}
        \langle \mu_X^{(k)}*\mu_{A_1'}\circ\mu_{A_2'}, \mu_{A_1}\circ\mu_{A_2}\rangle \geq \Bigl(1-\frac{\varepsilon}{8}\Bigr)\Bigl(1+\frac{3}{4}\varepsilon\Bigr)\mu(B)^{-1} \geq \Bigl(1+\frac{17}{32}\varepsilon\Bigr)\mu(B)^{-1}.
    \end{equation}
    Since the large spectrum is invariant under translations, the Chang--Sanders lemma \cite[Proposition 5.3]{schoen-sisask} and Lemma \ref{lm:regular_dilate} imply the existence of a set $\Delta \subseteq \widehat{G}$ of size
    \[d' \ll \varepsilon^{-2}k^2\cL(\eta)\cL(K^{-1}) \ll \varepsilon^{-2}k^2\cL(\alpha_1')\cL(\alpha_2')\]
    and a parameter $\rho' \gg \lambda\rho\nu/(d^2d')$ such that $B' \vcentcolon= \mathrm{Bohr}(\Gamma \cup \Delta;\rho')$ is a regular Bohr set with the property that $|\gamma(t)-1| \leq \nu$ for all $\gamma \in \mathrm{Spec}_{1/2}(\mu_X)$ and $t \in B'$. Here, $\nu \in (0,1]$ is a parameter to be determined later. Writing $F \vcentcolon= (\mu_{A_1'}\circ\mu_{A_2'})\circ(\mu_{A_1}\circ\mu_{A_2})$ for brevity, a standard Fourier-analytic calculation now shows that, for $t \in B'$,
    \begin{equation}\label{eq:fourier_bound}
        \lVert \tau_t(\mu_X^{(k)}*F)-\mu_X^{(k)}*F\rVert_{\infty} \leq \sum_{\gamma\in\widehat{G}}|\widehat{\mu_X}(\gamma)|^k|\widehat{F}(\gamma)||\gamma(t)-1| \leq (\nu+2^{1-k})\sum_{\gamma\in\widehat{G}}|\widehat{F}(\gamma)|.
    \end{equation}
    By the Cauchy--Schwarz inequality and Parseval's identity, we may bound
    \begin{align*}
        \sum_{\gamma\in\widehat{G}}|\widehat{F}(\gamma)| &= \sum_{\gamma\in\widehat{G}}|\widehat{\mu_{A_1'}}(\gamma)||\widehat{\mu_{A_2'}}(\gamma)||\widehat{\mu_{A_1}}(\gamma)||\widehat{\mu_{A_2}}(\gamma)|
        \leq (\alpha_1\alpha_2)^{-1/2}\mu(B)^{-1}.
    \end{align*}
    Thus, choosing $\nu = \varepsilon(\alpha_1\alpha_2)^{1/2}/64$ and $k = \lceil \log_2(2/\nu)\rceil$, it follows by combining \eqref{eq:large_scalar_product}, \eqref{eq:fourier_bound} and averaging that
    \[\langle \mu_X^{(k)}*F,\mu_{B'}\rangle \geq \Bigl(1+\frac{\varepsilon}{2}\Bigr)\mu(B)^{-1}.\]
    Finally, using the adjoint property of convolutions and Hölder's inequality, we may rewrite and bound the left-hand side as
    \[\langle \mu_X^{(k)}*\mu_{A_1'}\circ\mu_{A_2'}*\mu_{A_2},\mu_{A_1}*\mu_{B'}\rangle \leq \lVert \mu_X^{(k)}*\mu_{A_1'}\circ\mu_{A_2'}*\mu_{A_2}\rVert_1\lVert \mu_{A_1}*\mu_{B'}\rVert_{\infty},\]
    whence the desired conclusion follows.
\end{proof}

\section{Bohr sets}\label{app:bohr_sets}

We record here the definitions and properties concerning Bohr sets that are important to us; a more complete account is available in one of the standard sources in the literature, e.g.\ \cite[\S4]{tao-vu}. A large portion of what we need also appears in \cite[Appendix]{bloom-sisask}; we make appropriate references whenever possible. 

We begin by formally introducing the concept of a Bohr set.

\begin{definition}
\label{def:bohr_sets}
Given a non-empty set $\Gamma \subseteq \widehat{G}$ and a parameter $\rho \geq 0$, we define the \emph{Bohr set}
\[\text{Bohr}(\Gamma;\rho) \vcentcolon= \{x \in G \mid |\gamma(x)-1| \leq \rho \text{ for all } \gamma \in \Gamma\}.\]
$\Gamma$ is called the \emph{frequency set} and its cardinality is called the \emph{rank} of the corresponding Bohr set. We call $\rho$ the \emph{width} of the Bohr set.
\end{definition}

\begin{remark}
\label{rem:bohr_sets}
All Bohr sets are symmetric and contain $0$.
\end{remark}

It is important to note that, when speaking of a Bohr set $B$, we always implicitly fix a frequency set $\Gamma$ and a width $\rho$, and not just the ``physical'' Bohr set determined by $\Gamma$ and $\rho$ as in Definition \ref{def:bohr_sets}. Thus, on a formal level, one can think of a Bohr set purely as an ordered pair $(\Gamma,\rho)$. When omitted, the frequency set and width will always be clear from the context.

The following two definitions capture several ways of obtaining new Bohr sets from old.

\begin{definition}
\label{def:dilates_and_images}
Let $B$ be a Bohr set as in Definition \ref{def:bohr_sets}. Given a parameter $\delta \geq 0$, we define the \emph{$\delta$-dilate} of $B$ to be $B_{\delta} \vcentcolon= \text{Bohr}(\Gamma,\delta\rho)$. Given an automorphism $\psi \in \text{Aut}(G)$, we define the \emph{image} of $B$ under $\psi$ as\footnote{We suppress the function composition symbol in order to avoid confusion with difference convolution.}
\[\psi(B) \vcentcolon= \text{Bohr}(\{\gamma\psi^{-1} \mid \gamma \in \Gamma\}; \rho).\]
In particular, if $\lambda$ is an integer coprime to the order of $G$, then $\lambda \cdot B \vcentcolon= \psi_{\lambda}(B)$.
\end{definition}

\begin{remark}
\label{rem:dilates_and_images}
It is a very simple matter to check that the notion of the image of a Bohr set introduced in Definition \ref{def:dilates_and_images} is compatible with the corresponding set-theoretic concept.
\end{remark}

\begin{definition}
\label{def:intersection_bohr_sets}
If $B = \text{Bohr}(\Gamma; \rho), B' = \text{Bohr}(\Gamma';\rho')$ are Bohr sets, their intersection is defined to be
\[B \cap B' \vcentcolon= \text{Bohr}(\Gamma \cup \Gamma', \min(\rho; \rho')).\]
\end{definition}

\begin{remark}
\label{rem:intersection_bohr_sets}
The set-theoretic intersection of Bohr sets need not coincide with the physical Bohr set associated to the formal intersection. However, the former is easily seen to contain the latter. Furthermore, the operation of intersection of Bohr sets as per Definition \ref{def:intersection_bohr_sets} is commutative and associative.
\end{remark}

The next lemma provides a guarantee on the size of a Bohr set given only its rank and width; it is a variant of \cite[Lemma A.4]{bloom-sisask}.

\begin{lemma}
\label{lm:size_bohr_sets}
Let $B \subseteq G$ be a Bohr set of rank $d$ and width $\rho \in [0,2]$. Then $|B| \geq (\rho/8)^d|G|$.
\end{lemma}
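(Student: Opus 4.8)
The plan is to bound $|B|$ from below by exhibiting many disjoint translates of a smaller dilate $B_{\rho/8}$ (or simply by a direct packing/covering argument with dilates). Concretely, I would first reduce to the case $\rho \le 1$, which is harmless: the set $B = \mathrm{Bohr}(\Gamma;\rho)$ contains $\mathrm{Bohr}(\Gamma;\min(\rho,1))$, so it suffices to prove the bound for $\rho \le 1$, and the factor $(\rho/8)^d$ is monotone in $\rho$. With $\rho \le 1$ fixed, the key geometric input is that for each character $\gamma \in \Gamma$, the image $\gamma(G)$ lies on the unit circle, and the condition $|\gamma(x)-1| \le \rho$ cuts out an arc; the point is that a packing argument in the torus $(\mathbb{R}/\mathbb{Z})^d$ (via the map $x \mapsto (\arg\gamma(x)/2\pi)_{\gamma\in\Gamma}$, suitably interpreted) shows that translates of the smaller Bohr set $B' = \mathrm{Bohr}(\Gamma;\rho/8)$ tile a positive proportion of $G$.

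The cleanest route, following the proof of \cite[Lemma A.4]{bloom-sisask}, is the covering formulation: I claim $G$ is covered by at most $(8/\rho)^d$ translates of $B_{\rho/4}$ (or a similarly-sized dilate). To see this, pick a maximal set of points $x_1,\dots,x_M \in G$ that are pairwise ``$\rho/4$-separated'' in the sense that $x_a - x_b \notin B_{\rho/2}$ for $a \ne b$; then the translates $x_a + B_{\rho/4}$ are pairwise disjoint (if $x_a + b = x_b + b'$ with $b,b' \in B_{\rho/4}$ then $x_a - x_b = b' - b \in B_{\rho/2}$ by the triangle inequality $|\gamma(b'-b)-1| \le |\gamma(b')-1| + |\gamma(b)-1| \le \rho/2$), so $M \cdot |B_{\rho/4}| \le |G|$. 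On the other hand maximality forces the translates $x_a + B_{\rho/2}$ to cover $G$, so $M \cdot |B_{\rho/2}| \ge |G|$... but this alone only relates $|B_{\rho/4}|$ to $|B_{\rho/2}|$. The actual engine is to iterate: count separated points at scale $\rho/4$ inside $B_{\rho/2}$, etc., or — more directly — to use a volume-packing estimate for how many $\rho/4$-separated points can fit, namely $M \le (8/\rho)^d$, coming from the fact that the arcs of angular width $\asymp \rho/4$ around each $\gamma(x_a)$ are disjoint on each circle when the $x_a$ are separated, giving a product bound $\prod_{\gamma} (2\pi/(\rho/4 \cdot \text{const})) \le (8/\rho)^d$. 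Combining $M \le (8/\rho)^d$ with the covering $G = \bigcup_{a}(x_a + B_{\rho/2})$ and $B_{\rho/2} \subseteq B$ yields $|G| \le M|B| \le (8/\rho)^d |B|$, i.e. $|B| \ge (\rho/8)^d |G|$.

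The main obstacle I anticipate is making the packing estimate $M \le (8/\rho)^d$ fully rigorous in the finite-group (rather than torus) setting, since the characters $\gamma$ need not be ``independent'' and the image of $G$ under $x \mapsto (\gamma(x))_{\gamma\in\Gamma}$ is a finite subgroup of the $d$-torus $\mathbb{T}^d$ rather than all of it. The correct way to handle this is: the pairwise-separated points $x_1,\dots,x_M$ map to points $y_1,\dots,y_M$ in $\mathbb{T}^d$ that are pairwise at $\ell^\infty$-distance $\gtrsim \rho$ (after normalizing the arc-length metric), so they must be distinct and the standard volume argument in $\mathbb{T}^d$ — open boxes of side $\asymp \rho$ centered at the $y_a$ are disjoint, total volume $\le 1$ — gives $M \le (8/\rho)^d$ regardless of what subgroup they lie in. Everything else (the triangle inequality for the Bohr condition, disjointness/covering, the reduction $\rho \le 1$) is routine. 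I would keep the write-up short, citing \cite[Lemma A.4]{bloom-sisask} for the bulk and only remarking on the constant $8$ and the normalization $\rho \le 2$.
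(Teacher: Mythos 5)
Your overall plan --- transfer to $\mathbb{T}^d$ via $x \mapsto (\arg\gamma(x))_{\gamma\in\Gamma}$ and run a covering or pigeonhole argument there --- is the right one; the paper gives no proof and simply cites \cite[Lemma A.4]{bloom-sisask}, which is proved in exactly this spirit. However, two of your steps do not survive scrutiny. First, the ``reduction to $\rho \le 1$'' runs the wrong way. For $\rho \in (1,2]$, the containment $B \supseteq \mathrm{Bohr}(\Gamma;1)$ combined with the $\rho=1$ case only gives $|B| \ge 8^{-d}|G|$, whereas the claim $(\rho/8)^d|G|$ is strictly \emph{larger}; monotonicity of $(\rho/8)^d$ in $\rho$ is precisely why the implication fails. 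The fix is just to delete this step --- the argument goes through directly for all $\rho \in (0,2]$ (and $\rho=0$ is trivial).

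Second, the constant $8$ does not come out of the parametrisation you chose. From $x_a - x_b \notin B_{\rho/2}$ one gets, for some $\gamma$, $|\gamma(x_a-x_b)-1| > \rho/2$, and since $|e^{2\pi i\theta}-1| \le 2\pi\|\theta\|_{\mathbb{T}}$ this only yields an $\ell^\infty$-separation in $\mathbb{T}^d$ of $> \rho/(4\pi)$; the volume argument then gives $M \le (4\pi/\rho)^d$, and $4\pi \approx 12.57 > 8$, so the claimed $M \le (8/\rho)^d$ is not established. To land on the stated constant, separate at scale $\rho$ itself: take a maximal set with $x_a - x_b \notin B$ for $a \neq b$. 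Maximality then gives $G = \bigcup_a (x_a + B)$, hence $|G| \le M|B|$ directly, with none of the intermediate $B_{\rho/4}$, $B_{\rho/2}$ bookkeeping; and the separation is $> \rho$, so the $\ell^\infty$-gap is $> \rho/(2\pi)$ and $M \le (2\pi/\rho)^d \le (8/\rho)^d$. Alternatively, and most simply, skip the packing entirely and pigeonhole: partition $\mathbb{T}^d$ into $N^d$ boxes of side $1/N$ with $N = \lceil 2\pi/\rho\rceil$, pick the most populated box, and note that any two of its $\ge |G|/N^d$ preimages in $G$ differ by an element of $B$; the elementary check that $\lceil 2\pi/\rho\rceil \le 8/\rho$ for $\rho \in (0,2]$ (split at $\rho = 8-2\pi$) then finishes. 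It is also worth noting that this cleaner route actually gives the stronger constant $2\pi$, not $8$.
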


In general, Bohr sets are not even approximately closed under addition. Indeed, if $B$ is a Bohr set of rank $d$, then $|B+B|/|B|$ in principle grows exponentially in $d$. However, on replacing a copy of $B$ by a narrow dilate $B'$, one can hope to recover an approximate form of closure under addition. The following definition identifies precisely the class of Bohr sets $B$ for which this is feasible.

\begin{definition}
\label{def:regular_bohr_sets}
A Bohr set $B$ of rank $d$ is called \emph{regular} if
\[(1-100\delta d)|B| \leq |B_{1-\delta}| \leq |B_{1+\delta}| \leq (1+100\delta d)|B|\]
for all $\delta \in (0,1/(100d)]$.
\end{definition}

\begin{remark}
\label{rem:regular_bohr_sets}
If $B$ is a regular Bohr set of rank $d$ and $\delta \in (0,1/(100d)]$, then $|B+B_{\delta}| \leq 2|B|$.
\end{remark}

In addition to Remark \ref{rem:regular_bohr_sets}, regularity of Bohr sets is most often exploited via a result such as \cite[Lemma A.5]{bloom-sisask}. Even though it does not appear explicitly in our paper, we do use it indirectly since it features in the proofs of \cite[Proposition 18]{bloom-sisask} and \cite[Proposition 20]{bloom-sisask}. See also Lemma \ref{lm:domination_by_convolution} below for a result in a similar spirit.

The content of the following lemma is that regular Bohr sets exist on all scales; it appears for example as \cite[Lemma A.3]{bloom-sisask}.

\begin{lemma}
\label{lm:regular_dilate}
For any Bohr set $B$, there exists $\delta \in [\frac{1}{2},1]$ such that $B_{\delta}$ is regular.
\end{lemma}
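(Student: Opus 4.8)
The plan is to argue by contradiction, running a one-dimensional covering argument on the non-decreasing function $\delta\mapsto\log|B_\delta|$. Write $d$ for the rank of $B$ (so $d\ge 1$ by Definition \ref{def:bohr_sets}) and set $g(\delta):=|B_\delta|/|G|$, which is non-decreasing in $\delta$. Suppose for contradiction that $B_\delta$ is irregular for every $\delta\in[\tfrac12,1]$. Unpacking Definition \ref{def:regular_bohr_sets} and using monotonicity of $g$, for each such $\delta$ there is a parameter $\kappa=\kappa(\delta)\in(0,\tfrac{1}{100d}]$ with
\[ g\bigl((1+\kappa)\delta\bigr)\ \ge\ (1+100\kappa d)\,g\bigl((1-\kappa)\delta\bigr); \]
indeed, whichever of the two defining inequalities fails, combining it with $g((1-\kappa)\delta)\le g(\delta)\le g((1+\kappa)\delta)$ and the estimate $\tfrac{1}{1-x}\ge 1+x$ (for $x\in[0,1)$) yields this form. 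The only quantitative input beyond monotonicity is the bound $|B_2|\le C^{d}|B_{1/4}|$ for an absolute constant $C$; this is a standard Bohr-set estimate (it follows, for instance, from the dilation estimate $|B'_\epsilon|\gg(\epsilon/4)^{\operatorname{rk}B'}|B'|$ for $\epsilon\in(0,1]$, via the same packing argument that underlies Lemma \ref{lm:size_bohr_sets}), and it gives $\log\bigl(g(2)/g(1/4)\bigr)\ll d$.

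Next I would run the covering step. For $\delta\in[\tfrac12,1]$ put $I_\delta:=\bigl((1-\kappa(\delta))\delta,\ (1+\kappa(\delta))\delta\bigr)$; this is an open interval of length $2\kappa(\delta)\delta\in(\kappa(\delta),2\kappa(\delta)]$, it contains $\delta$, and since $\kappa(\delta)\le\tfrac{1}{100}$ it lies in $(\tfrac14,2)$. Thus $\{I_\delta\}_{\delta\in[1/2,1]}$ covers $[\tfrac12,1]$; pass to a finite subcover by compactness, then extract, by the usual greedy left-to-right selection (repeatedly take, among the intervals containing the current left marker, one with the largest right endpoint, and advance the marker to that endpoint), a subcollection $I_{\delta_1},\dots,I_{\delta_n}$ that still covers $[\tfrac12,1]$ and satisfies $I_{\delta_i}\cap I_{\delta_{i+2}}=\varnothing$ for all $i$. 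Hence $\{I_{\delta_i}:i\text{ odd}\}$ and $\{I_{\delta_i}:i\text{ even}\}$ are each pairwise disjoint. For each of these two disjoint subfamilies, summing the jumps $\log g\bigl((1+\kappa_i)\delta_i\bigr)-\log g\bigl((1-\kappa_i)\delta_i\bigr)$ telescopes, by monotonicity of $g$, to at most $\log\bigl(g(2)/g(1/4)\bigr)\ll d$; on the other hand each such jump is at least $\log(1+100\kappa_i d)\ge 50\kappa_i d$, using $100\kappa_i d\le 1$ and $\log(1+x)\ge x/2$ on $[0,1]$. Summing over the two subfamilies gives $\sum_i\kappa_i\ll 1$, and tracking constants one gets $\sum_i\kappa_i$ bounded by a small absolute constant strictly less than $\tfrac14$.

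Finally I would derive the contradiction from the covering property: since $\bigcup_i I_{\delta_i}\supseteq[\tfrac12,1]$ and $|I_{\delta_i}|\le 2\kappa_i$, subadditivity of Lebesgue measure gives $\sum_i 2\kappa_i\ge\tfrac12$, i.e. $\sum_i\kappa_i\ge\tfrac14$, contradicting the previous paragraph; hence some $\delta\in[\tfrac12,1]$ makes $B_\delta$ regular. The step I expect to be the main obstacle — and the reason the constant $100$ appears in Definition \ref{def:regular_bohr_sets} — is the calibration of absolute constants in the last two steps: one must ensure the per-interval jump $\gtrsim\kappa_i d$ is large enough, relative to the $O(d)$ total variation of $\log g$ over $(\tfrac14,2)$ and the $O(1)$ total length being covered, that the two resulting bounds on $\sum_i\kappa_i$ are genuinely incompatible; the covering/multiplicity-two bookkeeping is routine but has to be set up cleanly (e.g. via the greedy selection above) for the telescoping to be valid.
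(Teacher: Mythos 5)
Your argument is correct, and it is essentially the standard covering/pigeonhole argument, going back to Bourgain, that underlies the cited reference (Bloom--Sisask, Lemma A.3); the paper itself states the lemma without proof, so there is no in-paper proof to compare against. The details all check out: the reduction of irregularity to the one-sided multiplicative jump $g((1+\kappa)\delta)\ge(1+100\kappa d)\,g((1-\kappa)\delta)$ is valid in both failure cases via monotonicity and $\tfrac{1}{1-x}\ge 1+x$; the greedy multiplicity-two subcover is the standard one-dimensional Vitali selection and works since the marker stays in $[\tfrac12,1]$ until termination; the total variation bound $\log\bigl(g(2)/g(1/4)\bigr)\ll d$ follows from the doubling estimate $|B_\varepsilon|\ge(\varepsilon/4)^d|B|$; and tracking constants (e.g.\ $|B_2|/|B_{1/4}|\le 32^d$, each jump $\ge 50\kappa_i d$) gives $\sum_i\kappa_i\le\tfrac{2\log 32}{50}<\tfrac14$, clashing with the measure-theoretic lower bound $\sum_i\kappa_i\ge\tfrac14$.
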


We end with a simple consequence of regularity, which is \cite[Lemma A.6]{bloom-sisask}.

\begin{lemma}
\label{lm:domination_by_convolution}
There is an absolute constant $c > 0$ such that the following holds. Let $B$ be a regular Bohr set of rank $d$ and let $k$ be a positive integer. Let $\delta \in (0,c/(kd)]$ and let $\nu$ be a probability measure supported on $kB_{\delta}$. Then
\[\mu_B \leq 2(\mu_{B_{1+k\delta}}*\nu).\]
\end{lemma}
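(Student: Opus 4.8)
The plan is to establish the domination pointwise. Fix an absolute constant $c \le \tfrac{1}{100}$. Since $\mu_B$ vanishes outside $B$ and the right-hand side $2(\mu_{B_{1+k\delta}}*\nu)$ is non-negative, it will suffice to show that $(\mu_{B_{1+k\delta}}*\nu)(x) \ge \tfrac12\mu_B(x) = \tfrac{|G|}{2|B|}$ for every $x \in B$. First I would unwind the definitions: since $\mu_{B_{1+k\delta}} = \tfrac{|G|}{|B_{1+k\delta}|}1_{B_{1+k\delta}}$ and $\nu$ is a probability density with $\sum_{z\in G}\nu(z) = |G|$, one gets
\[(\mu_{B_{1+k\delta}}*\nu)(x) = \frac{1}{|B_{1+k\delta}|}\sum_{y\in B_{1+k\delta}}\nu(x-y).\]

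The key geometric step is to observe that for $x \in B$ every $y$ which contributes a non-zero term to this sum already lies in $B_{1+k\delta}$, so the displayed restricted sum equals the full sum $\sum_{y\in G}\nu(x-y) = |G|$. Indeed, $\nu(x-y) \ne 0$ forces $x - y \in kB_\delta$, hence $y \in x + kB_\delta$ using symmetry of Bohr sets. Applying the elementary inequality $|\gamma(u+v)-1| \le |\gamma(u)-1| + |\gamma(v)-1|$ for characters $\gamma$ repeatedly, I would check that $kB_\delta \subseteq B_{k\delta}$ and then $B + B_{k\delta} \subseteq B_{1+k\delta}$, so that $x + kB_\delta \subseteq B_{1+k\delta}$ for $x \in B$. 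Combining this with the previous display yields $(\mu_{B_{1+k\delta}}*\nu)(x) = |G|/|B_{1+k\delta}|$ for all $x \in B$.

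It then remains to compare $|B_{1+k\delta}|$ with $|B|$, which is where regularity enters. Since $\delta \le c/(kd)$, we have $k\delta \le c/d \le 1/(100d)$ as long as $c \le \tfrac{1}{100}$, so Definition \ref{def:regular_bohr_sets} applies with parameter $k\delta$ and gives $|B_{1+k\delta}| \le (1 + 100k\delta d)|B| \le (1+100c)|B| \le 2|B|$, again using $c \le \tfrac{1}{100}$. Hence $(\mu_{B_{1+k\delta}}*\nu)(x) = |G|/|B_{1+k\delta}| \ge |G|/(2|B|) = \tfrac12\mu_B(x)$ for $x\in B$, as required.

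I do not expect a genuine obstacle: the whole argument is a short chain of triangle inequalities for the Bohr-set defining condition together with a single invocation of regularity. The only points needing a little care are the bookkeeping of the normalisation constants in the convolution (so that the restricted sum really does evaluate to $|G|$) and pinning down $c$ small enough — any $c \le \tfrac{1}{100}$ works — to stay inside the range of $\delta$ where Definition \ref{def:regular_bohr_sets} can be used.
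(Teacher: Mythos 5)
Your proof is correct, and it is the standard argument behind \cite[Lemma A.6]{bloom-sisask}, which the paper cites without reproducing a proof. The key points check out: for $x\in B$ the support condition and the character triangle inequality give $x + kB_\delta \subseteq B + B_{k\delta} \subseteq B_{1+k\delta}$ so the convolution evaluates to exactly $\mu(B_{1+k\delta})^{-1}$ on $B$, and one application of Definition \ref{def:regular_bohr_sets} with parameter $k\delta \le c/d \le 1/(100d)$ gives $|B_{1+k\delta}| \le 2|B|$, completing the domination.
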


\bibliographystyle{plain}
\bibliography{references}

\end{document}